%% file: main_hdiv.tex
\documentclass{article}
\usepackage[utf8]{inputenc}
\usepackage[T1]{fontenc}
\usepackage{amsmath}
\usepackage{amsfonts}
\usepackage{hyperref}
\hypersetup{
    colorlinks=true,
    linkcolor=magenta,
    filecolor=magenta,      
    urlcolor=cyan,
    pdfpagemode=FullScreen,
    }
\usepackage{amssymb}
\usepackage{mathtools}
\usepackage[version=4]{mhchem}
\usepackage{tabularx}
\usepackage[a4paper, total={6.5in, 9in}]{geometry}
\usepackage[most]{tcolorbox}
\usepackage{xcolor}
\usepackage[numbers]{natbib}
\usepackage{stmaryrd}
\usepackage{bbold}
\usepackage{multirow} 
\usepackage{enumitem}
\usepackage{xcolor}
\usepackage{hyperref}
\usepackage{tikz-cd}
\usepackage{epstopdf}
\usepackage{bm}
\usepackage{pgfplots}
\usepackage{subcaption}
\usepackage{graphicx}
\usepackage[export]{adjustbox}
\usepackage{amsthm}
\usepackage{chngcntr}
\usepackage{stmaryrd}
\usepackage{apptools}
\usepackage{esint}
\usepackage{tikz}
\usetikzlibrary{patterns,positioning,cd,calc}
\usepackage{authblk}

\newtheorem{theorem}{Theorem}[section]
\newtheorem{lemma}[theorem]{Lemma}

\newtheorem{corollary}[theorem]{Corollary}

\newtheorem{remark}{Remark}

\newtheorem*{assumption}{Assumption}

\numberwithin{equation}{section}

\def\div{\mathrm{div\,}}
\def\curl{\boldsymbol{\mathrm{curl}}\,}
\def\grad{\boldsymbol{\mathrm{grad}}\,}
\def\vecDelta{\boldsymbol{\Delta}}
\def\divh{\mathrm{div}_h\,}
\def\curlh{\boldsymbol{\mathrm{curl}}_h\,}
\def\gradh{\boldsymbol{\mathrm{grad}}_h\,}
\def\divhz{\mathrm{div}^\circ_h\,}
\def\curlhz{\boldsymbol{\mathrm{curl}}^\circ_h\,}
\def\gradhz{\boldsymbol{\mathrm{grad}}^\circ_h\,}

\def\Hdiv{{\boldsymbol{H}(\mathrm{div})}}
\def\Hzdiv{{\boldsymbol{H}_0(\mathrm{div})}}
\def\Hdivz{{\boldsymbol{H}(\mathrm{div}0)}}
\def\Hzdivz{{\boldsymbol{H}_0(\mathrm{div}0)}}
\def\Hcurl{{\boldsymbol{H}(\boldsymbol{\mathrm{curl}})}}
\def\Hzcurl{\boldsymbol{H}_0(\boldsymbol{\mathrm{curl}})}
\def\Hcurlz{\boldsymbol{H}(\boldsymbol{\mathrm{curl}}\boldsymbol{0})}
\def\Hzcurlz{\boldsymbol{H}_0(\boldsymbol{\mathrm{curl}}\boldsymbol{0})}
\def\Ltwo{L^2}
\def\Ltwoz{L^2_0}

\def\ub{{\boldsymbol{u}}}
\def\vb{{\boldsymbol{v}}}
\def\mub{{\boldsymbol{\mu}}}
\def\nb{{\boldsymbol{n}}}

\def\R{{\mathbb{R}}}
\def\fb{\boldsymbol{f}}
\def\taub{\boldsymbol{\tau}}
\def\psib{\boldsymbol{\psi}}
\def\Hb{\boldsymbol{H}}
\def\Lb{\boldsymbol{L}}
\def\a{\mathsf{a}}
\def\b{\mathsf{b}}
\def\Nedelec{N\'{e}d\'{e}lec }
\def\xb{\boldsymbol{x}}
\def\dist{\mathrm{dist}}
\def\xib{\boldsymbol{\xi}}
\def\Wb{\boldsymbol{W}}
\def\supp{\mathrm{supp}}

\def\wb{\boldsymbol{w}}
\def\lambdab{\boldsymbol{\lambda}}
\def\Poincare{Poincar\'{e} }
\def\BNB{Banach-Ne\v cas-Babu\v ska }
\def\zb{\boldsymbol{z}}
\def\Pb{\boldsymbol{P}}
\def\Sigmab{\boldsymbol{\Sigma}}

\def\Vb{\boldsymbol{V}}
\def\jb{\boldsymbol{j}}

\def\ILG{\mathtt{I}_h}
\def\IRT{\boldsymbol{\mathtt{I}}^\mathrm{RT}_h}
\def\IND{\boldsymbol{\mathtt{I}}^\mathrm{ND}_h}
\def\PND{\boldsymbol{\mathtt{\Pi}}^\mathrm{ND}_h}
\def\PLG{\mathtt{\Pi}_h}
\def\ILGav{\mathcal{I}_h}
\def\INDav{\boldsymbol{\mathcal{I}}^{\mathrm{ND}}_h}
\def\IRTav{\boldsymbol{\mathcal{I}}^{\mathrm{RT}}_h}
\def\IX{\boldsymbol{\mathcal{I}}^{\boldsymbol{\mathrm{X}}}_h}
\def\PX{\boldsymbol{\mathtt{\Pi}}^{\boldsymbol{\mathrm{X}}}}
\def\Pz{\boldsymbol{\mathtt{\Pi}}^\circ}
\def\Pzh{\boldsymbol{\mathtt{\Pi}}^\circ_h}

\def\Xb{{\boldsymbol{X}}}
\def\Yb{{\boldsymbol{Y}}}
\def\Holders{H\"{o}lder's }
\def\pstar{{q}}
\def\zerob{\boldsymbol{0}}
\def\Zb{\boldsymbol{Z}}
\def\Sigmabh{\boldsymbol{\Sigma}_h}
\def\otop{\perp\mkern-20.7mu\bigcirc\,}
\def\hform{\boldsymbol{\mathfrak{H}}}
\def\Null{\mathcal{N}}
\def\hb{\boldsymbol{h}}

\title{A Mixed Finite Element Method for the Dirichlet Vector Laplacian in Three Dimensions}
\author[]{Ralf Hiptmair\thanks{Seminar for Applied Mathematics, ETH Z\"{u}rich, \texttt{ralf.hiptmair@sam.math.ethz.ch}} }
\author[]{Peiyang Yu\thanks{Seminar for Applied Mathematics, ETH Z\"{u}rich, \texttt{peiyang.yu@sam.math.ethz.ch}} }
\author[]{Tianwei Yu\thanks{Seminar for Applied Mathematics, ETH Z\"{u}rich, \texttt{tianwei.yu@sam.math.ethz.ch}~(corresponding author)}}
\affil[]{}
\date{}

\begin{document}
\maketitle
\begin{abstract}
    This work establishes the well-posedness and \emph{a priori} error estimates for the mixed FEEC-type finite element approximation of the three-dimensional vector Laplace boundary value problem subject to Dirichlet boundary conditions. The Dirichlet condition disrupts the structure of the standard de Rham complex, requiring the vorticity to be sought in a non-standard function space to achieve well-posedness. We derive error estimates that confirm the numerically observed suboptimal convergence rates. In particular, by developing a discrete Caccioppoli-type inequality for discrete curl-harmonic functions, we prove $(k-1/2)$-th order convergence in the energy norm on general domains and $k$-th order convergence in $\Lb^2$ on convex domains, where $k \ge 1$ denotes the polynomial degree of the finite element spaces. These results extend the previous two-dimensional analysis developed in [Arnold, D.N., Falk, R.S. and Gopalakrishnan, J., 2012. \emph{Mixed finite element approximation of the vector Laplacian with Dirichlet boundary conditions}. Mathematical Models and Methods in Applied Sciences, 22(9), p.1250024.]~to three-dimensional domains with general topology. As a direct application, a discretization of the Stokes problem in vorticity-velocity-pressure form is studied.
\end{abstract}

\section{Introduction}
Let $\Omega \subset \mathbb{R}^3$ be a bounded Lipschitz polygonal domain. The vector Laplace boundary value problem (BVP) with homogeneous Dirichlet boundary conditions is
\begin{equation}\label{eq:veclap}
\begin{aligned}
    - \vecDelta \ub &= \fb &&\text{in } \Omega,\\
    \ub &= \zerob && \text{on } \partial\Omega.\\ 
\end{aligned}
\end{equation}
Using the vector calculus identity $-\vecDelta = \curl\curl - \grad \div$, the equation~\eqref{eq:veclap} can be rewritten as
\begin{align}\label{eq:veclap1st}
    \mub = \curl \ub, \quad \curl \mub - \grad \div \ub = \fb
\end{align}
where an additional unknown $\mub$ (called vorticity) is introduced. We consider the weak form of~\eqref{eq:veclap1st} seeking $(\mub, \ub) \in \Sigmab\times\Hzdiv$ such that
\begin{equation}\label{eq:hodgelap}
    \begin{aligned}
        (\mub, \taub) - \langle\curl \taub, \ub\rangle &= 0 &&\forall\,\taub \in \Sigmab, \\
        \langle\curl\mub, \vb\rangle + (\div\ub, \div\vb) &= \langle\fb, \vb\rangle && \forall\,\vb\in \Hzdiv,
    \end{aligned}
\end{equation}
where $\Sigmab$ will be defined below. In contrast to the vector Hodge Laplacian with \emph{standard boundary conditions} $\ub \cdot\nb = 0, \curl \ub \times \nb = \zerob$ or $\ub \times \nb = \zerob, \div \ub = 0$ on $\partial\Omega$~\cite[][Section~4.5]{arnold_2018}, the problem~\eqref{eq:hodgelap} is subject to the full Dirichlet condition $\ub = \zerob$ and is not amenable to straightforward Galerkin discretization in the framework of \emph{Finite Element Exterior Calculus} (FEEC)~\cite{arnold_2006,arnold_2018}.  In fact, a naive choice $\Sigmab := \Hcurl$ leads to ill-posedness. As a remedy, the authors of~\cite{dubois_2003,arnold_2012} have proposed using the following superspace of $\Hcurl$:
\begin{equation}\label{def:Sigmab}
    \Sigmab:=\{\taub\in \Lb^2: \curl \taub \in \Hzdiv'\}
\end{equation}
endowed with the norm $\|\taub\|^2_{\Sigmab}:=\|\taub\|^2 + \|\curl\taub\|^2_{\Hzdiv'}$. 

It was observed numerically that FEEC discretizations of~\eqref{eq:hodgelap} work well but suffer from half-order suboptimal convergence~\cite{dubois_2003b, arnold_2012}, which was later demonstrated theoretically by~\cite{arnold_2012} on two-dimensional convex domains. In agreement with the numerical observations, they derive $(k-1/2)$-order convergence in the $\Lb^2$ error of $\mub$ and the $\Hdiv$ error of $\ub$, as well as $k$-th order convergence in the $\Lb^2$-error of $\ub$, where $k\geq 1$ is the polynomial degree of the elements. 

Regardless of the suboptimality, the mixed formulation~\eqref{eq:hodgelap} finds its application in some scenarios. It is directly related to the \emph{Vorticity-Velocity-Pressure} (VVP) formulation of the Stokes problem~\cite{dubois_2003,dubois_2003b}, which will be revisited in Section~\ref{sec:stokes}. Furthermore, the (singularly perturbed) Darcy-Stokes-Brinkman equation~\cite{bernardi_2005,vassilevski_2014,alvarez_2016} can be treated naturally by such a formulation, while other methods entail either customized finite element spaces~\cite{mardal_2002,xie_2008,guzman_2012b} or stabilization terms~\cite{burman_2007,badia_2009,hansbo_2009,juntunen_2010}.

Although the two-dimensional version of~\eqref{eq:hodgelap} has been analyzed theoretically, the three-dimensional case has remained open. Numerical evidence~\cite{dubois_2003b} (see also Section~\ref{sec:num}) suggests a similar convergence behavior to that in two dimensions. Techniques developed in~\cite{arnold_2012} cannot be directly adapted to three dimensions. The fundamental difference lies in the fact that the curl operator reduces to a rotated gradient in two dimensions, while the vector $\curl$ is relevant in three dimensions. Some $L^p$ arguments are essential in the error analysis (see Section~\ref{sec:errest}) with $p \in [1,\infty]$. They are conveniently available for $H^1$-elliptic problems (see, e.g.~\cite[][Section~8.5]{brenner_2007}), but not for $\Hcurl$-elliptic ones.

In this work, we provide \emph{a priori} error estimates for the FEEC discretization of problem~\eqref{eq:hodgelap} in three dimensions. This entails adaptations of arguments in~\cite{arnold_2012} and some new techniques. Furthermore, additional effort is dedicated to covering domains of general topology. In overview, we extend the result of~\cite{arnold_2012} in the following respects:
\begin{itemize}
    \item extension to three dimensions;
    \item extension to domains with general topology;
    \item removal of the $|\log h|$-factor in the error bounds.
\end{itemize}

This paper is organized as follows. Section~\ref{sec:cont} establishes the well-posedness of the continuous formulation~\eqref{eq:hodgelap} by deriving an equivalent norm for the vorticity space $\Sigmab$. In Section~\ref{sec:disc}, we first provide some preliminaries needed in the analysis of the FEEC discretization~\eqref{eq:hodgelapdisc}, especially the approximation property of a curl-elliptic projection. Afterward, the stability of~\eqref{eq:hodgelapdisc} is shown, and \emph{a priori} error estimates are derived. Section~\ref{sec:stokes} applies this framework to the discretization of the Stokes problem in the VVP formulation, establishing well-posedness and \emph{a priori} error bounds for both velocity and pressure. Finally, Section~\ref{sec:num} presents results from numerical experiments to validate the theoretical convergence results.

\subsection*{Notations.}
Throughout the presentation, we adhere to the convention that variables in plain style are scalar fields, while those in bold style are vector fields, that is, $\ub = \left[u_1, u_2, u_3\right]^\top$. The same rule applies to spaces (e.g., $\Lb^2$ stands for the vector $L^2$ space) and operators. We recall the relevant differential operators:
\begin{equation*}
    \grad: u\mapsto\begin{bmatrix}\partial_x u \\ \partial_y u \\ \partial_z u\end{bmatrix},\quad\curl: \ub\mapsto\begin{bmatrix}\partial_y u_3 - \partial_z u_2\\ \partial_z u_1 - \partial_x u_3 \\ \partial_x u_2 - \partial_y u_1 \end{bmatrix},\quad \div: \ub\mapsto \partial_x u_1 + \partial_y u_2 + \partial_z u_3.
\end{equation*}
The parentheses $(\cdot,\cdot)$ stand for $L^2$ (or $\Lb^2$) inner product, while the angle brackets $\langle\cdot,\cdot\rangle$ represent duality pairings that will be clear from the context. Given $D \subset \R^3$ a bounded Lipschitz domain, we adopt the standard notations for the spaces $\Hb(\boldsymbol{\mathrm{curl}},D),  \Hb(\mathrm{div},D)$ and the (fractional) Sobolev spaces $W^{s,p}(D), s \in (0,\infty), p \in [1,\infty]$ with $H^s(D) := W^{s,2}(D)$. Denoting by $\nb$ the outer normal vector of $D$, the spaces with vanishing traces (in a suitable sense) on $\partial D$ are denoted by $H^1_0(D):=\{u \in H^1(D): u = 0 \text{ on }\partial D\}, \Hb_0(\boldsymbol{\mathrm{curl}},D) := \{\ub \in \Hb(\boldsymbol{\mathrm{curl}},D): \ub \times\nb = 0 \text{ on }\partial D\}, \Hb_0(\mathrm{div},D):=\{\ub \in  \Hb(\mathrm{div},D): \ub\cdot\nb = 0\text{ on }\partial D\}$. Additionally, we let $L^2_0(D):= \{u \in L^2(D): (u,1) = 0\}$ consist of $L^2$ functions with zero mean. We write $\|\cdot\| = \|\cdot\|_{L^2(D)}$ or $\|\cdot\|_{\Lb^2(D)}$ for brevity. The kernel spaces $\Hb(\boldsymbol{\mathrm{curl}}\boldsymbol{0},D),  \Hb(\mathrm{div}0,D)$ consist of functions that have zero curl and zero divergence, respectively. When $D = \Omega$, the spatial domain is omitted for brevity; e.g., $H^1 := H^1(\Omega)$. 
Given $\Yb\subset L^2$ (or $\Lb^2$), we denote by $\Yb^\perp$ its $L^2$-orthogonal complement. Given an operator $\mathsf{T}$, its null space is denoted by $\mathcal{N}(\mathsf{T})$. 
Throughout the presentation, we use the symbol $C$ to represent a generic positive constant that may change from line to line. Importantly, it may depend on the spatial domain, the mesh shape regularity, and the polynomial degree, but not on the mesh size. By $A \simeq B$, we mean that there exist two generic constants $C_1, C_2$ such that $C_1 A \leq B \leq C_2A$.

\section{Well-posedness of the continuous formulation}\label{sec:cont}

\subsection{Preliminaries}
\subsubsection{The \texorpdfstring{$\Lb^2$}{L2} de Rham complex}
In this section, we briefly recap the Hilbert complex structure of the $\Lb^2$ de Rham complex, which provides the functional analytic tools to study~\eqref{eq:hodgelap}. We follow the notations and definitions used in~\cite[][Chapter~4]{arnold_2018}. The primal $\Lb^2$ de Rham complex is
\begin{equation}\label{eq:derhamcomplexprimal}
    \begin{tikzcd}[row sep=0.5em, column sep=2em]
        0 \arrow[r]
        & H^1 \arrow[r, "\grad"]
        & \Hcurl \arrow[r, "\curl"]
        & \Hdiv \arrow[r, "\div"]
        & L^2 \arrow[r]
        & 0
    \end{tikzcd}
\end{equation}
and its dual complex
\begin{equation}\label{eq:derhamcomplexdual}
    \begin{tikzcd}[row sep=0.5em, column sep=2em]
        0
        & L^2 \arrow[l]
        & \Hzdiv \arrow[l, "-\div"']
        & \Hzcurl \arrow[l, "\curl"']
        & H^1_0 \arrow[l, "-\grad"']
        & 0 \arrow[l].
    \end{tikzcd}
\end{equation}
We have the Hodge decompositions~\cite[][Theorem~4.5]{arnold_2018} arising from the Hilbert complex structure:
\begin{align}
    L^2 &=\hform^0 \otop \div \Hzdiv , \label{def:hodgedecom0}\\
    \Lb^2 &= \overbrace{\grad H^1 \otop \phantom{\hform^1\,\,}}^{=\Hcurlz}\hspace{-0.5cm}\underbrace{\hform^1 \otop \curl\Hzcurl}_{=\Hzdivz}, \label{def:hodgedecom1}\\
    \Lb^2 &= \overbrace{\curl \Hcurl \otop \phantom{\hform^2\,\,}}^{=\Hdivz}\hspace{-0.5cm}\underbrace{\hform^2 \otop \grad H^1_0}_{=\Hzcurlz},\label{def:hodgedecom2} \\
    L^2 &= \div \Hdiv \label{def:hodgedecom3}
\end{align}
where $\otop$ indicates an $\Lb^2$-orthogonal sum, and $\hform^k, k = 0,1,2$, denotes the space of harmonic $k$-forms. They are characterized by
\begin{align}
    \hform^0 &= \{u \in H^1: \grad u =0\text{ in } \Omega\}, \label{def:h0}\\
    \hform^1 &= \{\ub \in \Hcurl\cap \Hzdiv: \curl\ub = \zerob, \div \ub = 0 \text{ in } \Omega\}, \label{def:h1}\\
    \hform^2 &= \{\ub \in \Hzcurl \cap \Hdiv: \curl\ub = \zerob, \div \ub = 0 \text{ in } \Omega\}. \label{def:h2}
\end{align}
The dimension of $\hform^k, k=0,1,2$, is equal to the $k$-th Betti number $b_k$ of the domain $\Omega$. In particular, $b_0$ is the number of connected components; $b_1$ is the number of tunnels through $\Omega$; $b_2$ is the number of voids in $\Omega$.

The vector Laplace BVP $-\vecDelta \ub = \fb$ with standard boundary conditions, namely, $\ub\times\nb = \zerob, \div \ub = 0$ or $\ub\cdot\nb = 0, \curl\ub\times\nb=\zerob$ on $\partial\Omega$, arises as a Hodge Laplace BVP of the $\Lb^2$ de Rham complex. However, the system~\eqref{eq:hodgelap}, subject to the full Dirichlet condition $\ub = \zerob$, cannot be formulated as a standard Hodge Laplace BVP. It breaks the de Rham complex structure in the sense that the system~\eqref{eq:hodgelap} is ill-posed if the function spaces are chosen as those in the de Rham complex.

In the sequel, the domain $\Omega$ will comply with a very mild assumption:
\begin{assumption}[connected domain]
    $\Omega \in \R^3$ is a bounded Lipschitz polygonal domain with only one connected component.
\end{assumption}
\noindent It is easy to see that $\hform^0 \cong \R$ holds under the assumption. Yet nontrivial harmonic functions in $\hform^1$ and $\hform^2$ may exist.

\subsubsection{Some decompositions}
In the following, we show an elementary but crucial lemma on the existence of scalar/vector potentials for certain functions (see, e.g.,~\cite[][Proposition~2.1]{martin_2025}).
\begin{lemma}[scalar/vector potentials]\label{thm:potscalarvec}
\begin{align}
    \Hzcurlz &\subset \grad H^1, \label{eq:potscalar}\\
    \Hzdivz &\subset \curl\Hcurl. \label{eq:potvec}
\end{align}
\end{lemma}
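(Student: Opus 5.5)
The plan is to derive both inclusions directly from the Hodge decompositions \eqref{def:hodgedecom1} and \eqref{def:hodgedecom2}. The key point is to show that the relevant harmonic fields of the dual complex ($\hform^2$ for \eqref{eq:potscalar}, $\hform^1$ for \eqref{eq:potvec}) are themselves gradients or curls of the primal complex. The obstruction to having a global potential is topological. However, a field in $\Hzcurlz$ or $\Hzdivz$ carries vanishing boundary data, so it can be extended by zero outside $\Omega$, and the extended field is closed on a simple enough ambient domain.

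\textbf{Proof of \eqref{eq:potscalar}.} Let $\ub\in\Hzcurlz$ and extend it by zero to a large open ball $B\supset\overline\Omega$, giving $\tilde\ub$. Because the tangential trace $\ub\times\nb$ vanishes, $\tilde\ub\in\Hb(\boldsymbol{\mathrm{curl}},B)$ and $\curl\tilde\ub=\zerob$. To check this, take $\boldsymbol\phi\in C_c^\infty(B)^3$. Green's formula for $\Hzcurl$ gives $(\tilde\ub,\curl\boldsymbol\phi)_B=(\ub,\curl\boldsymbol\phi)_\Omega=(\curl\ub,\boldsymbol\phi)_\Omega=0$. Since $B$ is simply connected, the standard result (the case $b_1(B)=0$ of \eqref{def:hodgedecom1} on $B$) gives $\tilde\ub=\grad\tilde p$ with $\tilde p\in H^1(B)$. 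Restricting to $\Omega$ yields $\ub=\grad(\tilde p|_\Omega)\in\grad H^1$.

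\textbf{Proof of \eqref{eq:potvec}.} Let $\ub\in\Hzdivz$ and again extend by zero to $\tilde\ub$ on the ball $B$. Since $\ub\cdot\nb=0$, the same Green's-formula check gives $\tilde\ub\in\Hb(\mathrm{div},B)$ with $\div\tilde\ub=0$. On the ball, $b_2(B)=0$, so \eqref{def:hodgedecom2} on $B$ reduces to $\Hb(\mathrm{div}0,B)=\curl\Hb(\boldsymbol{\mathrm{curl}},B)$. Hence $\tilde\ub=\curl\tilde\wb$ with $\tilde\wb\in\Hb(\boldsymbol{\mathrm{curl}},B)$. Restricting to $\Omega$ gives $\ub=\curl(\tilde\wb|_\Omega)$ with $\tilde\wb|_\Omega\in\Hcurl$.

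Alternatively, one can stay inside $\Omega$. By \eqref{def:hodgedecom2}, $\Hzcurlz=\hform^2\otop\grad H^1_0$, so it suffices to show $\hform^2\subset\grad H^1$. A harmonic field in $\hform^2$ is known to be the gradient of a function that is constant on each boundary component. The extension argument above is cleaner and avoids cohomology bookkeeping, so I would use it.

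\textbf{Main obstacle.} The step that needs care is that the zero extension preserves the space with vanishing curl (respectively divergence) across $\partial\Omega$. This is where the zero trace is used, via the integration-by-parts characterization of $\Hzcurl$ and $\Hzdiv$ for Lipschitz $\Omega$. One also needs the exactness of the de Rham complex on a ball, which is classical (e.g.\ Girault–Raviart). Once these two facts are in place, both inclusions follow immediately.
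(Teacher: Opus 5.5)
Your proof is correct and follows essentially the same route as the paper: extend by zero to an enclosing ball, then use that the relevant harmonic forms vanish there. The differences are minor. For \eqref{eq:potscalar} you use the decomposition \eqref{def:hodgedecom1} without boundary conditions on the ball ($\hform^1(B)=\{\zerob\}$, potential in $H^1(B)$), whereas the paper keeps the boundary condition and uses \eqref{def:hodgedecom2} with $\hform^2(\tilde\Omega)=\{\zerob\}$, giving a potential in $H^1_0(\tilde\Omega)$. You also write out the Green's-formula check that the zero extension stays curl-free (respectively divergence-free), which the paper leaves implicit.
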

\begin{proof}
    We only show~\eqref{eq:potscalar} while~\eqref{eq:potvec} follows similarly. Let $\tilde{\Omega} \supset \Omega$ be an open ball containing $\Omega$. Given $\taub \in \Hzcurlz$, denote by $\tilde{\taub} \in \boldsymbol{H}_0(\boldsymbol{\mathrm{curl}}\boldsymbol{0}; \tilde{\Omega})$ the zero extension of $\taub$ to $\tilde{\Omega}$. Notice that $\tilde{\Omega}$ has no void ($\hform^2 = \{\zerob\}$). We have by~\eqref{def:hodgedecom2} that $\grad H^1_0(\tilde{\Omega}) = \boldsymbol{H}_0(\boldsymbol{\mathrm{curl}}\boldsymbol{0}; \tilde{\Omega})$. Hence, we can find a $\tilde{w} \in H^1_0(\tilde{\Omega})$ such that $\tilde{\taub} = \grad \tilde{w}$. Clearly, $\taub = \grad \tilde{w}|_\Omega \in \grad H^1$.
\end{proof}
\noindent Combining the lemma with the decomposition~\eqref{def:hodgedecom1} and~\eqref{def:hodgedecom2} yields the next corollary.
\begin{corollary}[Helmholtz decomposition]\label{thm:helmdecomp} The following holds:

    \noindent $(i)$ given $\taub \in \Hzcurl$, there exist $\phi \in H^1, \psib \in \Hcurl$ and a positive constant $C$ such that 
    \begin{equation}\label{eq:helmdecomp1}
        \taub = \grad \phi + \curl\psib,\quad\quad (\grad \phi, \curl\psib) = 0,
        \quad\quad \|\curl \psib\| \leq C\|\curl \taub\|;
    \end{equation}

    \noindent $(ii)$ given $\vb \in \Hzdiv$, there exist $\phi \in H^1, \psib \in \Hcurl$ and a positive constant $C$ such that 
    \begin{equation}\label{eq:helmdecomp2}
        \vb = \grad \phi + \curl\psib,\quad\quad (\grad \phi, \curl\psib) = 0,
        \quad\quad \|\grad \phi\| \leq C\|\div \vb\|.
    \end{equation}
\end{corollary}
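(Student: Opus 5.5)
For part $(i)$, I will start from the Hodge decomposition \eqref{def:hodgedecom1} applied to $\taub \in \Hzcurl \subset \Lb^2$. It gives
\[
\taub = \grad \phi + \hb + \curl \zb, \qquad \phi \in H^1,\ \hb\in\hform^1,\ \zb \in \Hzcurl,
\]
with all three pieces mutually $\Lb^2$-orthogonal. The term $\hb + \curl\zb$ belongs to $\Hzdivz$, so Lemma~\ref{thm:potscalarvec}, \eqref{eq:potvec}, yields a $\psib\in\Hcurl$ with $\curl\psib = \hb + \curl\zb$. Orthogonality to $\grad\phi$ is then immediate from \eqref{def:hodgedecom1}.

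The remaining issue is the bound $\|\curl\psib\| \le C\|\curl\taub\|$. Here I will not use the first splitting. Instead I will apply \eqref{def:hodgedecom2} to $\taub$ itself. Since $\taub\in\Hzcurl$, the component of $\taub$ in $\Hzcurlz$ is $\hb_2 + \grad w$ with $w\in H^1_0$, and its complement $\taub^\perp\in \curl\Hcurl$ lies in $\Hzcurl \cap \Hdivz$. Moreover $\taub^\perp$ is orthogonal to $\Hzcurlz$ and satisfies $\curl \taub^\perp = \curl\taub$.

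The Poincar\'e inequality for the dual complex (closed range of $\curl$ on $\Hzcurl$) then gives $\|\taub^\perp\|\le C\|\curl\taub\|$. On the other piece, Lemma~\ref{thm:potscalarvec}, \eqref{eq:potscalar}, gives $\hb_2 + \grad w \in \grad H^1$, say $\hb_2+\grad w=\grad\phi_0$. Finally I split $\taub^\perp$ using \eqref{def:hodgedecom1}: its $\Hcurlz$ component is some $\grad\phi_1$, and $\grad\phi_1 \in \Hcurlz$ has norm at most $\|\taub^\perp\|$.

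The decomposition I will actually use is
\[
\taub = \grad(\phi_0+\phi_1) + \curl\psib, \qquad \curl\psib := P_{\Hzdivz}\taub^\perp .
\]
This requires $P_{\Hzdivz}\grad\phi_0 = 0$, which holds because $\grad\phi_0\in\grad H^1\subset\Hcurlz \perp \Hzdivz$. With that, $\curl\psib$ is the $\Hzdivz$-projection of $\taub$, and it is a curl by \eqref{eq:potvec}. The bound follows from
\[
\|\curl\psib\|\le\|\taub^\perp\|\le C\|\curl\taub\|.
\]
Orthogonality follows because $\grad(\phi_0+\phi_1)$ is exactly the $\Hcurlz$-projection of $\taub$.

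For part $(ii)$, I take $\vb\in\Hzdiv$. I let $\grad\phi$ be its $\Hcurlz$-component from \eqref{def:hodgedecom1} (every element of $\Hcurlz$ is a gradient there), and set $\curl\psib := \vb-\grad\phi\in\Hzdivz$, which is a curl via \eqref{eq:potvec}. Orthogonality is built in.

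For the bound I use \eqref{def:hodgedecom0} and the closed range of $\div$ on $\Hzdiv$. These give $\ub\in\Hzdiv\cap(\Hzdivz)^\perp$ with $\div\ub=\div\vb$ and $\|\ub\|\le C\|\div\vb\|$. Then $\vb-\ub\in\Hzdivz$, so the $\Hcurlz$-component of $\vb$ equals that of $\ub$. Hence $\|\grad\phi\|\le\|\ub\|\le C\|\div \vb\|$.

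The main obstacle is the norm bound in $(i)$. It rests on the Poincar\'e-type inequality $\|\taub^\perp\|\le C\|\curl\taub\|$ for $\taub^\perp\in\Hzcurl\cap(\Hzcurlz)^\perp$, which I take from the closed-range property of the Hilbert complex \eqref{eq:derhamcomplexdual}. It also depends on checking that harmonic fields in $\hform^2$ can be absorbed into gradients through \eqref{eq:potscalar}.
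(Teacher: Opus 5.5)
\textbf{There is a genuine error: from the splitting of $\taub^\perp$ onward, and throughout $(ii)$, you treat $\Hcurlz$ as if it were $\grad H^1$.} Your ingredients are the right ones (Hodge decompositions, Lemma~\ref{thm:potscalarvec} to turn the pieces into gradients and curls, closed-range Poincar\'e inequalities for the bounds), and your first paragraph of $(i)$ uses them correctly. Later, however, you assert that every element of $\Hcurlz$ is a gradient, that the $\Hcurlz$-component of $\taub^\perp$ is some $\grad\phi_1$, and that $\Hcurlz\perp\Hzdivz$.

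By \eqref{def:hodgedecom1}, $\Hcurlz=\grad H^1\oplus\hform^1$ and $\Hzdivz=\hform^1\oplus\curl\Hzcurl$. These spaces intersect in $\hform^1$, which is nontrivial whenever $\Omega$ has a tunnel ($b_1>0$). That is exactly the general-topology case the corollary is meant to cover. Taken literally, your argument fails there:
\begin{itemize}
\item In $(ii)$, on a polyhedral solid torus, any $\vb\in\hform^1\setminus\{\zerob\}$ lies in $\Hzdiv$ with $\div\vb=0$. Its $\Hcurlz$-component is $\vb$ itself, which is not a gradient (it is orthogonal to $\grad H^1$). Your bound would then force $\vb=\zerob$.
\item The step ``$\vb-\ub\in\Hzdivz$, so the $\Hcurlz$-components of $\vb$ and $\ub$ agree'' fails for the same reason.
\item In $(i)$, $P_{\Hcurlz}+P_{\Hzdivz}=I+P_{\hform^1}$. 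So the fields $\grad\phi_0$, $P_{\Hcurlz}\taub^\perp$ and $P_{\Hzdivz}\taub^\perp$ sum to $\taub+P_{\hform^1}\taub^\perp$, not to $\taub$. The extra term need not vanish: on a solid torus $\hform^2=\{\zerob\}$, so $\taub^\perp$ is the projection of $\taub$ onto $\Hdivz\supset\hform^1$.
\item Your orthogonality claim in $(i)$ also cannot rest on $\Hcurlz\perp\Hzdivz$, which is false.
\end{itemize}

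\textbf{The repair} is to use the orthogonal splitting $\Lb^2=\grad H^1\oplus\Hzdivz$, i.e.\ $(\Hzdivz)^\perp=\grad H^1$. This is \eqref{def:hodgedecom1} with $\hform^1$ grouped into $\Hzdivz$. Replace every ``$\Hcurlz$-component'' by the $\grad H^1$-component. The $\hform^1$-part then sits in the curl term, which is legitimate by \eqref{eq:potvec}. This is the role Lemma~\ref{thm:potscalarvec} plays for $\hform^1$, parallel to your absorption of $\hform^2$ into gradients via \eqref{eq:potscalar}. The orthogonality you actually need is $\grad H^1\perp\Hzdivz$.

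With this change all your steps go through. In $(ii)$, your $\ub\in\Hzdiv\cap(\Hzdivz)^\perp$ is then itself $\grad\phi$. In $(i)$, the re-splitting of $\taub^\perp$ is unnecessary: $\taub=\grad\phi_0+\taub^\perp$ with $\taub^\perp\in\curl\Hcurl$ already has the orthogonality ($\curl\Hcurl\perp\Hzcurlz$) and the bound $\|\taub^\perp\|\le C\|\curl\taub\|$. These two corrected arguments are the natural reading of the paper's one-line proof via \eqref{def:hodgedecom1}, \eqref{def:hodgedecom2} and Lemma~\ref{thm:potscalarvec}.
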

Lastly, we recall the well-known regular decomposition of $\Hzdiv$.
\begin{theorem}[regular decomposition of $\Hzdiv$\text{~\cite[][Theorem~6]{hiptmair_2020}}]\label{thm:regdecomphdiv}
    For any $\vb \in \Hzdiv$, there exists $\hat{\vb}, \hat{\taub} \in \Hb^1_0$ depending linearly on $\vb$ such that
        $\vb = \hat{\vb} + \curl \hat{\taub}$
    and 
    \begin{equation}\label{eq:regdecomphdiv}
        \begin{aligned}
            \|\hat{\vb}\| + \|\hat{\taub}\|_{\Hcurl} \leq C\|\vb\|, \quad\quad\|\hat{\vb}\|_{\Hb^1} + \|\hat{\taub}\|_{\Hb^1} \leq C\|\vb\|_{\Hdiv}.
        \end{aligned}
    \end{equation}
\end{theorem}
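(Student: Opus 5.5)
The natural route is the classical one: extend the field by zero to a larger smooth domain, take a divergence-free vector potential there, and regularize that potential with a Fourier-based or Bogovskii-type construction. Fix an open ball $\tilde{\Omega}\supset\overline{\Omega}$ and let $\tilde{\vb}$ be the zero extension of $\vb\in\Hzdiv$. Since $\vb\cdot\nb=0$ on $\partial\Omega$, we have $\tilde{\vb}\in\Hb_0(\mathrm{div},\tilde\Omega)$ with $\div\tilde\vb$ equal to the zero extension of $\div\vb$. The plan has three steps. First, split off a regular part that carries the divergence. Second, show the remainder is divergence-free, so Lemma~\ref{thm:potscalarvec} applies. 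Third, lift the resulting curl potential into $\Hb^1_0$.

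For the first step, choose $\hat{\vb}$ with $\div\hat\vb=\div\vb$ and $\hat\vb\in\Hb^1_0(\Omega)$. Directly, this uses a Bogovskii-type right inverse of the divergence. The catch is that $\div\vb$ has mean zero only on each connected component, but $\Omega$ is connected and $\int_\Omega\div\vb=\int_{\partial\Omega}\vb\cdot\nb=0$, so this is fine. Bogovskii's operator for Lipschitz domains (a finite union of star-shaped domains) gives $\|\hat\vb\|_{\Hb^1}\le C\|\div\vb\|$. It also extends boundedly as a map $H^{-1}\to\Lb^2$, which yields $\|\hat\vb\|\le C\|\div \vb\|_{H^{-1}}\le C\|\vb\|$ because $\|\div\vb\|_{H^{-1}}\le\|\vb\|$.

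Then $\vb-\hat\vb\in\Hzdivz$. For the third step, I would work with the zero extension in the ball, where $b_1=b_2=0$. Via Lemma~\ref{thm:potscalarvec}, or directly, write $\vb-\hat\vb=\curl\taub$. To obtain a regular potential vanishing on $\partial\Omega$ rather than merely on $\partial\tilde\Omega$, apply the regularized Poincaré (Bogovskii-type) integral operator for the curl of Costabel--McIntosh on each star-shaped piece. Glue the pieces with a partition of unity and correct the gluing errors, which are again divergence-free with compact support, by the same operator. This produces $\hat\taub\in\Hb^1_0$ with $\curl\hat\taub=\vb-\hat\vb$. The Costabel--McIntosh operators are pseudodifferential of order $-1$ and preserve compact support in the star-shaped piece, so they map $\Lb^2\to\Hb^1_0$ and $\Hb^{-1}\to\Lb^2$. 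Consequently $\|\hat\taub\|_{\Hb^1}\le C\|\vb-\hat\vb\|\le C\|\vb\|_{\Hdiv}$, and $\|\hat\taub\|\le C\|\vb\|$. Also $\|\curl\hat\taub\|\le\|\vb\|+\|\hat\vb\|\le C\|\vb\|$, which gives the $\Hcurl$ bound. Linearity holds because every operator used is linear.

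The main obstacle is the gluing step on a nonconvex, topologically nontrivial Lipschitz polyhedron. The partition-of-unity corrections must themselves be divergence-free with zero flux through each piece, so that the local curl potentials exist. Nontrivial $\hform^2$ (voids) could obstruct this. It does not here: the homogeneous normal trace makes the zero extension divergence-free across the boundary, so all fluxes through the boundary components vanish. I would handle this step by working entirely in $\tilde\Omega$, where there are no voids, and verifying the compact-support property of the Costabel--McIntosh operators piece by piece.
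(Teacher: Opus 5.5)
The paper does not prove this statement; it imports it from~\cite{hiptmair_2020}. Judged on its own, your argument has a genuine gap in Step~3 whenever $\Omega$ has tunnels ($b_1>0$), which the paper's "general topology" setting allows.

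\textbf{The gap.} You fix $\hat{\vb}=B\div\vb$ with $B$ a Bogovskii right inverse of $\div$, so $\hat{\vb}$ depends only on $\div\vb$. You then claim $\vb-\hat{\vb}=\curl\hat{\taub}$ with $\hat{\taub}\in\Hb^1_0$. But $\Hb^1_0\subset\Hzcurl$, and for $\hat{\taub}\in\Hzcurl$ and $\hb\in\hform^1$ one has $(\curl\hat{\taub},\hb)=(\hat{\taub},\curl\hb)=0$. Now take $\vb=\hb\in\hform^1\setminus\{\zerob\}$. Then $\div\vb=0$ and $\hat{\vb}=\zerob$, so $\hb=\curl\hat{\taub}$ would force $\|\hb\|^2=0$.

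So the topological obstruction is $\hform^1$, since $\Hzdivz=\hform^1\oplus\curl\Hzcurl$ ($\Lb^2$-orthogonal, see~\eqref{def:hodgedecom1}). It is not $\hform^2$; voids are harmless here, as your flux argument correctly shows.

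Passing to the ball $\tilde\Omega$ does not help. The potential obtained there, or from Lemma~\ref{thm:potscalarvec}, is curl-free on $\tilde\Omega\setminus\overline\Omega$ but cannot be made to vanish there. If it did, its restriction would lie in $\Hzcurl$, contradicting the orthogonality above.

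Relatedly, "correcting the gluing errors by the same operator" never terminates. Each pass leaves a new divergence-free remainder, and that remainder is exactly what carries the $\hform^1$-component.

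\textbf{The repair.} Stop demanding an exact curl and put the gluing error into the regular part, which the statement permits. Let $\wb:=\vb-B\div\vb\in\Hzdivz$. Let $\{\chi_j\}$ be a partition of unity subordinate to a cover with $\Omega_j=\Omega\cap U_j$ star-shaped with respect to balls. Let $B_j$ and $A_j$ be the local (Costabel--McIntosh) Bogovskii operators for $\div$ and $\curl$.

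\begin{itemize}
\item Since $\div\wb=0$ and $\wb\cdot\nb=0$, we have $\int_{\Omega_j}\grad\chi_j\cdot\wb=0$. Hence $\wb_j:=\chi_j\wb-B_j(\grad\chi_j\cdot\wb)$ is well defined.
\item The zero extension of $\wb_j$ is divergence-free in $\R^3$ with support in $\overline{\Omega_j}$. Therefore $\wb_j=\curl A_j\wb_j$ with $A_j\wb_j\in\Hb^1_0(\Omega_j)$.
\item Set $\hat{\taub}:=\sum_jA_j\wb_j$ and $\hat{\vb}:=B\div\vb+\boldsymbol{e}$, where $\boldsymbol{e}:=\sum_jB_j(\grad\chi_j\cdot\wb)\in\Hb^1_0$.
\end{itemize}
Then $\vb=\hat{\vb}+\curl\hat{\taub}$ and $\|\boldsymbol{e}\|_{\Hb^1}+\|\hat{\taub}\|_{\Hb^1}\le C\|\wb\|\le C\|\vb\|$. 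Both estimates follow from this together with your bounds on $B\div\vb$.

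\textbf{A smaller point.} The bound $\|B\div\vb\|\le C\|\vb\|$ requires viewing $\div\vb$, via zero extension, as an element of $(H^1(\Omega))'$, i.e.\ a distribution in $H^{-1}(\R^3)$ supported in $\overline\Omega$. That is the space on which Bogovskii's operator extends to an $\Lb^2$-valued map. The norm there is $\le\|\vb\|$ precisely because $\vb\cdot\nb=0$. An arbitrary element of $(H^1_0(\Omega))'$ would not do.
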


\subsection{Norm equivalence on \texorpdfstring{$\Sigmab$}{Sigma}}\label{sec:normeq}
Before showing the well-posedness of~\eqref{eq:hodgelap}, we state an equivalent norm on the space $\Sigmab$ (see its definition in~\eqref{def:Sigmab}). 

First, we introduce a projection $\Pz: \Sigmab \rightarrow \Hzcurl\cap \Hzcurlz^\perp$ such that $\taub_0 := \Pz\taub$ solves
\begin{equation}\label{def:tau0}
    (\curl \taub_0, \curl \psib) = \langle \curl \taub, \curl \psib\rangle \quad\forall\,\psib\in \Hzcurl.
\end{equation}
The \Poincare inequality~\cite[][Theorem~4.6]{arnold_2018} and the Lax-Milgram theorem (see, e.g.,~\cite[][Lemma~25.2]{ern_2021b}) imply that $\taub_0$ is uniquely defined.
\begin{lemma}[norm equivalence on $\Sigmab$]\label{thm:sigmanormequiv}
    For any $\taub \in \Sigmab$, it holds that
    \begin{equation}
        \|\taub\|_{\Sigmab} \simeq \|\taub\| + \|\Pz\taub\|_{\Hcurl}
    \end{equation}
    where $\Pz:\Sigmab\rightarrow \Hzcurl\cap \Hzcurlz^\perp$ is defined in~\eqref{def:tau0}. 
\end{lemma}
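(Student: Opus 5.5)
We have to show $\|\curl\taub\|_{\Hzdiv'} \simeq \|\curl\Pz\taub\|$ up to adding $\|\taub\|$, and also control $\|\Pz\taub\|$. The plan is to first prove the two-sided estimate
\begin{equation*}
\|\curl\Pz\taub\| \le C\big(\|\taub\| + \|\curl\taub\|_{\Hzdiv'}\big),\qquad \|\curl\taub\|_{\Hzdiv'} \le C\big(\|\taub\| + \|\curl\Pz\taub\|\big),
\end{equation*}
and then bound $\|\Pz\taub\|$ by $\|\curl\Pz\taub\|$. The last step follows from the Poincar\'e inequality on $\Hzcurl\cap\Hzcurlz^\perp$ \cite[Theorem~4.6]{arnold_2018}, since $\Pz\taub$ lies in that space; so $\|\Pz\taub\|_{\Hcurl}\simeq\|\curl\Pz\taub\|$.

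\textbf{Upper bound for $\Pz$.} Test \eqref{def:tau0} with $\psib=\taub_0:=\Pz\taub\in\Hzcurl$. Here the pairing $\langle\curl\taub,\curl\psib\rangle$ is the $\Hzdiv'\times\Hzdiv$ duality, which makes sense because $\curl\Hzcurl\subset\Hzdiv$ with $\|\curl\psib\|_{\Hdiv}=\|\curl\psib\|$. This gives
\begin{equation*}
\|\curl\taub_0\|^2 \le \|\curl\taub\|_{\Hzdiv'}\,\|\curl\taub_0\|,
\end{equation*}
hence $\|\curl\taub_0\|\le\|\curl\taub\|_{\Hzdiv'}$.

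\textbf{Reverse bound.} Take $\vb\in\Hzdiv$ and apply Theorem~\ref{thm:regdecomphdiv}: $\vb=\hat\vb+\curl\hat\taub$ with $\hat\vb,\hat\taub\in\Hb^1_0$. Then
\begin{equation*}
\langle\curl\taub,\vb\rangle=\langle\curl\taub,\hat\vb\rangle+\langle\curl\taub,\curl\hat\taub\rangle .
\end{equation*}
\begin{itemize}
\item The second term equals $(\curl\taub_0,\curl\hat\taub)$ by \eqref{def:tau0}, since $\hat\taub\in\Hb^1_0\subset\Hzcurl$. It is therefore bounded by $\|\curl\taub_0\|\,\|\hat\taub\|_{\Hcurl}\le C\|\curl\taub_0\|\,\|\vb\|$, using the first estimate in \eqref{eq:regdecomphdiv}.
\item For the first term, $\hat\vb\in\Hb^1_0$ and $\taub\in\Lb^2$. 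The distributional curl together with the density of $C_c^\infty$ in $\Hb^1_0$ gives $\langle\curl\taub,\hat\vb\rangle=(\taub,\curl\hat\vb)$. This is bounded by $\|\taub\|\,\|\hat\vb\|_{\Hb^1}\le C\|\taub\|\,\|\vb\|_{\Hdiv}$, using the second estimate in \eqref{eq:regdecomphdiv}.
\end{itemize}
Taking the supremum over $\|\vb\|_{\Hdiv}\le1$ yields $\|\curl\taub\|_{\Hzdiv'}\le C(\|\taub\|+\|\curl\taub_0\|)$.

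Combining the two bounds with the Poincar\'e step gives the stated equivalence. The main obstacle is making the identity $\langle\curl\taub,\hat\vb\rangle=(\taub,\curl\hat\vb)$ rigorous. The pairing is initially defined only for $\curl\taub\in\Hzdiv'$, so one must argue that it agrees with the distributional action on $\Hb^1_0\subset\Hzdiv$. I would do this by density of smooth compactly supported fields in $\Hb^1_0$: the $\Hb^1$ norm dominates the $\Hdiv$ norm, so both sides are continuous in $\hat\vb$ with respect to $\Hb^1$, and they coincide on test functions by the definition of the distributional curl.
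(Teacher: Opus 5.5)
Your proposal is correct and follows the same route as the paper. You get $\|\curl\Pz\taub\|\le\|\curl\taub\|_{\Hzdiv'}$ by testing \eqref{def:tau0} with $\Pz\taub$, and the reverse bound by inserting the regular decomposition $\vb=\hat{\vb}+\curl\hat{\taub}$ of Theorem~\ref{thm:regdecomphdiv}. You also make explicit two steps the paper leaves implicit: the Poincar\'e inequality giving $\|\Pz\taub\|_{\Hcurl}\simeq\|\curl\Pz\taub\|$, and the density argument justifying $\langle\curl\taub,\hat{\vb}\rangle=(\taub,\curl\hat{\vb})$.
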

\begin{proof}
    Using the regular decomposition of $\Hzdiv$ in Theorem~\ref{thm:regdecomphdiv}, we deduce that
    \begin{equation}
        \begin{aligned}
            \|\curl\taub\|_{\Hzdiv'} &= \sup_{\zerob \neq \vb \in \Hzdiv} \frac{\langle \curl \taub, \vb\rangle}{\|\vb\|_{\Hzdiv} } = \sup_{\zerob \neq \vb \in \Hzdiv} \frac{\langle \curl \taub, \hat{\vb} + \curl \hat{\taub} \rangle}{\|\vb\|_{\Hzdiv} } \\
            &= \sup_{\zerob \neq \vb \in \Hzdiv} \frac{(\taub, \curl\hat{\vb}) + (\curl \Pz\taub, \curl \hat{\taub} )}{\|\vb\|_{\Hzdiv} } \\
            &\leq C(\|\taub\| + \|\curl\Pz \taub\|).
        \end{aligned}
    \end{equation}
    It is clear from~\eqref{def:tau0} that $\|\curl\Pz\taub\| \leq \|\curl \taub\|_{\Hzdiv'}$, and we can conclude.
\end{proof}
\begin{remark}[comparison with~\cite{arnold_2012}]
    We provide a more direct proof of the $ \Sigmab $-norm equivalence than that in \cite[][Section~3.2]{arnold_2012}. It has the added advantage of naturally accommodating domains with nontrivial topology. The same remark applies to the discrete counterpart of Lemma~\ref{thm:sigmanormequiv}~(see Lemma~\ref{thm:sigmanormequivdisc}).
\end{remark}

\subsection{Well-posedness}
Denote the bilinear form associated with \eqref{eq:hodgelap} by
\begin{align}
    \a((\mub,\ub),(\taub,\vb)) &:= (\mub,\taub) + (\div\ub,\div\vb) - \langle\curl\taub,\ub\rangle + \langle\curl\mub,\vb\rangle.\label{def:a}
\end{align}
We can rewrite~\eqref{eq:hodgelap} as seeking $(\mub,\ub) \in \Sigmab\times\Hzdiv$ such that
\begin{equation}\label{eq:hodgelaprd}
    \a((\mub,\ub),(\taub,\vb)) = \langle\fb,\vb\rangle \quad \forall\,(\taub, \vb)\in\Sigmab\times\Hzdiv.
\end{equation}
The following theorem establishes the well-posedness of~\eqref{eq:hodgelap} (or~\eqref{eq:hodgelaprd}).
\begin{theorem}[well-posedness]\label{thm:well-posed}
     The problem~\eqref{eq:hodgelap} admits a unique solution $(\mub,\ub)\in \Sigmab\times\Hzdiv$. There exists a positive constant $C$ independent of $\fb \in \Hzdiv'$ such that
     \begin{equation}
         \|\mub\|_{\Sigmab} + \|\ub\|_{\Hdiv} \leq C \|\fb\|_{\Hzdiv'}.
     \end{equation}
\end{theorem}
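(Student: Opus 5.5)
We prove Theorem~\ref{thm:well-posed} via the Banach--Ne\v{c}as--Babu\v{s}ka theorem: we show an inf-sup condition for the bilinear form $\a$ in~\eqref{def:a} on $\Xb:=\Sigmab\times\Hzdiv$ with the norm $\|\taub\|_{\Sigmab}+\|\vb\|_{\Hdiv}$. Boundedness of $\a$ is immediate from the definition of $\|\cdot\|_{\Sigmab}$, since $|\langle\curl\taub,\vb\rangle|\le\|\curl\taub\|_{\Hzdiv'}\|\vb\|_{\Hdiv}$. Because $\a$ is skew in its off-diagonal part, $\a((\taub,\vb),(\taub,\vb))=\|\taub\|^2+\|\div\vb\|^2$. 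Hence the inf-sup condition and its transposed version (injectivity of the adjoint) have the same structure. It therefore suffices to construct, for a given $(\mub,\ub)\in\Xb$, a test pair $(\taub,\vb)$ with $\|(\taub,\vb)\|_{\Xb}\le C\|(\mub,\ub)\|_{\Xb}$ and $\a((\mub,\ub),(\taub,\vb))\ge c\|(\mub,\ub)\|_{\Xb}^2$.

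The test function is built as a sum of three pieces.
\begin{itemize}
\item[(a)] The diagonal choice $(\mub,\ub)$ yields $\|\mub\|^2+\|\div\ub\|^2$.
\item[(b)] To control $\|\curl\mub\|_{\Hzdiv'}$, which by Lemma~\ref{thm:sigmanormequiv} amounts to controlling $\|\curl\Pz\mub\|$, we test with $\vb=\Pz\mub\in\Hzcurl\subset\Hzdiv$. Its divergence vanishes, since $\Pz\mub\in\Hzcurlz^\perp$ is orthogonal to $\grad H^1_0$. Definition~\eqref{def:tau0} then gives $\langle\curl\mub,\Pz\mub\rangle=\|\curl\Pz\mub\|^2$. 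The norm of this test function is $\|\Pz\mub\|_{\Hcurl}\le C\|\curl\Pz\mub\|$ by the Poincar\'e inequality.
\item[(c)] To control the remaining part of $\|\ub\|$, we use Corollary~\ref{thm:helmdecomp}(ii) to write $\ub=\grad\phi+\curl\psib$ with $\|\grad\phi\|\le C\|\div\ub\|$. We may choose $\psib\in\Hcurl$ with $\|\psib\|_{\Hcurl}\le C\|\curl\psib\|\le C\|\ub\|$ by a Poincar\'e-type argument. We then test with $\taub=-\psib$. Since $\psib\in\Hcurl\subset\Sigmab$, this gives $-\langle\curl\taub,\ub\rangle=(\curl\psib,\ub)=\|\curl\psib\|^2$, using $(\grad\phi,\curl\psib)=0$. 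The cross term $(\mub,-\psib)$ is absorbed by Young's inequality into the $\|\mub\|^2$ from (a).
\end{itemize}

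Taking $(\taub,\vb)=(\mub-\delta\psib,\ \ub+\delta\Pz\mub)$ with $\delta>0$ small, we obtain
\begin{equation*}
\a\ge\|\mub\|^2+\|\div\ub\|^2+\delta\|\curl\Pz\mub\|^2+\delta\|\curl\psib\|^2-\delta\|\mub\|\|\psib\| .
\end{equation*}
No term $(\div\ub,\div\Pz\mub)$ appears, because $\div\Pz\mub=0$. Combining this with $\|\ub\|^2\le C(\|\div\ub\|^2+\|\curl\psib\|^2)$ and with Lemma~\ref{thm:sigmanormequiv}, which gives $\|\mub\|_{\Sigmab}\simeq\|\mub\|+\|\curl\Pz\mub\|$, yields the inf-sup bound. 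The transposed condition follows by the symmetric construction: since $\a((\taub,\vb),(\mub,\ub))$ equals $\a((\mub,-\ub),(\taub,-\vb))$ up to the sign pattern, the same test functions with flipped signs work. BNB then gives existence, uniqueness and the stability bound, with $\fb\in\Hzdiv'$ acting on the second component.

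The main obstacle is (c). We must check that the vector potential $\psib$ can be chosen with $\psib\in\Hcurl$ bounded by $\|\ub\|$; for instance, take $\psib\in\Hcurl\cap\Hzdivz$ or orthogonal to $\Hcurlz$ so that the Poincar\'e inequality of~\cite{arnold_2018} applies. We must also check that $\curl\psib$ indeed appears with $(\grad\phi,\curl\psib)=0$ in a way compatible with general topology, since harmonic fields in $\hform^1$ are absorbed in the $\grad\phi+\curl\psib$ splitting via Lemma~\ref{thm:potscalarvec}. If needed, we would instead use $\ub\in\Hzdiv$ directly with~\eqref{def:hodgedecom1}: the component in $\Hzdivz$ lies in $\curl\Hcurl$ by~\eqref{eq:potvec}, and the gradient component is controlled by $\div\ub$ through the Poincar\'e inequality for the dual complex.
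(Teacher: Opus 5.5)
Your plan follows the paper's route: BNB, the diagonal test, the splitting $\ub=\grad\phi+\curl\psib$ with $\psib\perp\Hcurlz$ tested by $-\psib$, a small-$\delta$ combination, and Lemma~\ref{thm:sigmanormequiv}. However, step (b) fails as written.

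First, $\Hzcurl\not\subset\Hzdiv$, so $\Pz\mub$ is in general not an admissible test function. It lies in $\Hzcurl\cap\Hzcurlz^\perp=\Hzcurl\cap\curl\Hcurl$, so it is divergence-free, but its normal trace need not vanish. For instance, let $\Omega$ be a ball and take $\mub=\phi\,(0,0,1)^\top-\grad p$, where $\phi\in C^\infty_c(\Omega)$ with $\int_\Omega\phi\neq0$ and $p\in H^1_0$ solves $\Delta p=\partial_z\phi$. Then $\Pz\mub=\mub$, while $\int_{\partial\Omega}z\,\mub\cdot\nb=\int_\Omega\phi\neq0$.

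Second, even for admissible fields, \eqref{def:tau0} only describes how $\curl\mub$ acts on curls $\curl\psib$ with $\psib\in\Hzcurl$. It gives no information about $\langle\curl\mub,\Pz\mub\rangle$, and that quantity is not $\|\curl\Pz\mub\|^2$. The $\Hcurl$-norm you quote for this test function is also not the norm of the second slot.

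The repair is to test with $\vb=\curl\Pz\mub$, which is exactly the paper's choice. Since $\curl$ maps $\Hzcurl$ into $\Hzdivz$, this $\vb$ is admissible, $\div\vb=0$, and $\|\vb\|_{\Hdiv}=\|\curl\Pz\mub\|\le\|\curl\mub\|_{\Hzdiv'}$. Moreover, \eqref{def:tau0} with $\psib=\Pz\mub$ gives $\langle\curl\mub,\curl\Pz\mub\rangle=\|\curl\Pz\mub\|^2$. With $(\taub,\vb)=(\mub-\delta\psib,\ \ub+\delta\curl\Pz\mub)$, the rest of your estimate goes through unchanged. The paper omits the $\delta$ in front of $\curl\Pz\mub$, which is immaterial.

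Your argument for the transposed BNB condition, via the identity $\a((\taub,\vb),(\mub,\ub))=\a((\mub,-\ub),(\taub,-\vb))$, is correct. It supplies a hypothesis that the paper leaves implicit.
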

\begin{proof}
    By the BNB theorem~\cite[][Theorem~25.9]{ern_2021b}, it suffices to show the boundedness and inf-sup condition of the bilinear form $\a$, which are demonstrated in the next lemma.
\end{proof}
\begin{lemma}[boundedness and inf-sup condition]\label{thm:bddinfsup}
    There exist positive constants $C, \beta_\a$ such that
    \begin{align}
         \a((\mub,\ub),(\taub,\vb)) \leq C \|(\mub,\ub)\|_{\Sigmab\times\Hdiv}\|(\taub,\vb)\|_{\Sigmab\times\Hdiv}&  && \forall\,(\mub,\ub), (\taub,\vb)\in \Sigmab\times\Hzdiv, \label{eq:bddacont}\\
        \sup_{0\neq (\taub, \vb)\in\Sigmab\times\Hzdiv}\frac{\a((\mub,\ub),(\taub,\vb))}{\|(\taub,\vb)\|_{\Sigmab\times\Hdiv}} \geq \beta_\a\|(\mub,\ub)\|_{\Sigmab\times\Hdiv}& && \forall\,(\mub,\ub)\in \Sigmab\times\Hzdiv.\label{eq:infsupa}
    \end{align}
\end{lemma}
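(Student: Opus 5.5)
Boundedness \eqref{eq:bddacont} follows term by term from Cauchy--Schwarz and the definition of the $\Sigmab$-norm. The terms $(\mub,\taub)$ and $(\div\ub,\div\vb)$ are bounded directly. For the coupling terms we use $|\langle\curl\taub,\ub\rangle| \le \|\curl\taub\|_{\Hzdiv'}\|\ub\|_{\Hdiv}$, and similarly for $\langle\curl\mub,\vb\rangle$.

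For the inf-sup condition \eqref{eq:infsupa}, fix $(\mub,\ub)$ and build a test pair as a sum of three pieces; by Lemma~\ref{thm:sigmanormequiv} it suffices to control $\|\mub\|+\|\curl\Pz\mub\|+\|\ub\|+\|\div\ub\|$.

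\textbf{Piece 1.} The choice $(\taub,\vb)=(\mub,\ub)$ gives $\a = \|\mub\|^2+\|\div\ub\|^2$, since the coupling terms cancel.

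\textbf{Piece 2.} To recover $\curl\Pz\mub$, note that $\vb=\curl\Pz\mub$ lies in $\Hzdiv$, because $\Pz\mub\in\Hzcurl$, and it is divergence free. By \eqref{def:tau0} with $\psib=\Pz\mub$,
\[
\a((\mub,\ub),(\zerob,\curl\Pz\mub)) = \langle\curl\mub,\curl\Pz\mub\rangle = \|\curl\Pz\mub\|^2,
\]
and its norm is $\|\vb\|_{\Hdiv}=\|\curl\Pz\mub\|$.

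\textbf{Piece 3.} The main obstacle is controlling $\|\ub\|$, since only $\div\ub$ appears coercively. Use Corollary~\ref{thm:helmdecomp}(ii): $\ub=\grad\phi+\curl\psib$ with $\|\grad\phi\|\le C\|\div\ub\|$. The remaining part $\curl\psib$ must be reached through the term $-\langle\curl\taub,\ub\rangle$.
\begin{itemize}
\item Choose $\psib$ as the Hodge-orthogonal potential, i.e.\ $\psib\in\Hcurl\cap(\Hcurlz)^\perp$, so that $\|\psib\|_{\Hcurl}\le C\|\curl\psib\|\le C\|\ub\|$ by the Poincar\'e inequality.
\item Take $\taub=-\psib\in\Hcurl\subset\Sigmab$, with $\|\taub\|_{\Sigmab}\le C\|\psib\|_{\Hcurl}$. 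Then $-\langle\curl\taub,\ub\rangle=(\curl\psib,\ub)=\|\curl\psib\|^2$, using the orthogonality $(\grad\phi,\curl\psib)=0$.
\item The extra term $(\mub,\taub)=-(\mub,\psib)$ is bounded by $\|\mub\|\,\|\psib\| \le \epsilon\|\curl\psib\|^2+C_\epsilon\|\mub\|^2$.
\end{itemize}
Also $\|\ub\|^2=\|\grad\phi\|^2+\|\curl\psib\|^2$, so this piece controls $\|\ub\|$ up to the $\|\div\ub\|$ contribution. One subtlety to check is that the $\curl\psib$ component here need not lie in $\curl\Hzcurl$; this is fine because $\taub$ only needs to be in $\Sigmab$, not in $\Hzcurl$.

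\textbf{Combining.} Take $(\taub,\vb)=\big(\mub - \delta\psib,\ \ub+\curl\Pz\mub\big)$ with small $\delta>0$. Piece 2's contribution $\|\curl\Pz\mub\|^2$ enters with no cross terms, since $\taub$-component zero there. Piece 3's contribution is $\delta\|\curl\psib\|^2 - \delta(\mub,\psib)$. Absorbing via Young's inequality and choosing $\delta$ small gives
\[
\a \ge c\left(\|\mub\|^2+\|\div\ub\|^2+\|\curl\Pz\mub\|^2+\|\ub\|^2\right),
\]
while $\|(\taub,\vb)\|_{\Sigmab\times\Hdiv}\le C\|(\mub,\ub)\|_{\Sigmab\times\Hdiv}$. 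This yields \eqref{eq:infsupa}.
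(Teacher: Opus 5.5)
Your proposal is correct and matches the paper's proof essentially step for step. You use the same three test pairs $(\mub,\ub)$, $(\zerob,\curl\Pz\mub)$ and $(-\psib,\zerob)$, where $\psib$ is the Hodge-orthogonal vector potential from the Helmholtz decomposition of $\ub$, combine them as $(\mub-\delta\psib,\ \ub+\curl\Pz\mub)$ with small $\delta$, and close the argument with Young's inequality, the Poincar\'e inequalities and Lemma~\ref{thm:sigmanormequiv}. One step is left implicit: reducing to $\|\curl\Pz\mub\|$ uses $\|\Pz\mub\|\le C\|\curl\Pz\mub\|$ (Poincar\'e on $\Hzcurl\cap\Hzcurlz^\perp$). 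The paper leaves this step implicit as well.
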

\begin{proof}
    The estimate~\eqref{eq:bddacont} is obvious. We show~\eqref{eq:infsupa}. Fix $(\mub, \ub) \in \Sigmab \times \Hzdiv$. By the Helmholtz decomposition~\eqref{eq:helmdecomp2}, we have $\ub = \grad\phi_\ub + \curl\psib_\ub $ where $\phi_\ub\in H^1/\R, \psib_\ub\in\Hcurl \cap \Hcurlz^\perp$ satisfy the orthogonality $(\grad\phi_\ub, \curl\psib_\ub) = 0$ and the \Poincare inequalities~\cite[][Theorem~4.6]{arnold_2018}:
    \begin{align}
        \|\psib_\ub\| \leq C_p \|\curl\psib_\ub\|, \quad \|\grad \phi_\ub\| \leq c_p\|\div \ub\|.\label{eq:poincare}
    \end{align}
    Suitable $(\taub,\vb)\in\Sigmab\times\Hzdiv$ for the given $(\mub,\ub)\in\Sigmab\times\Hzdiv$ is to be chosen.
    We have 
    \begin{equation}\label{eq:thminfsup-1}
    \begin{aligned}
        \a((\mub,\ub),(\mub,\ub)) &= \|\mub\|^2 + \|\div\ub\|^2, \\
        \a((\mub,\ub),(\zerob,\curl\Pz\mub)) &= \langle\curl\mub,\curl\Pz\mub\rangle = \|\curl \Pz\mub\|^2,\\
        \a((\mub,\ub),(-\psib_\ub,\zerob)) &= -(\mub,\psib_\ub) + (\curl \psib_\ub,\ub) \\
        &\geq - \frac{1}{2\gamma}\|\mub\|^2 - \frac{\gamma}{2}\|\psib_\ub\|^2 + \|\curl\psib_\ub\|^2 \\
        &\geq - \frac{1}{2\gamma}\|\mub\|^2 + (1 - \frac{\gamma}{2}C_p)\|\curl\psib_\ub\|^2 \\
        &= - \frac{C_p}{2}\|\mub\|^2 + \frac{1}{2}\|\curl\psib_\ub\|^2. \quad (\text{set } \gamma = C_p^{-1})
    \end{aligned}
    \end{equation}
    Let $\taub = \mub - \delta\psib_\ub$ and $\vb = \ub + \curl\Pz\mub$. By~\eqref{eq:thminfsup-1}\eqref{eq:poincare} and the $\Sigmab$-norm equivalence (see Lemma~\ref{thm:sigmanormequiv}), we have
    \begin{equation}
        \begin{aligned}
            \a((\mub,\ub),(\taub,\vb)) &\geq \left(1-\frac{C_p}{2}\delta\right)\|\mub\|^2 + \|\curl\Pz\mub\|^2 + \frac{1}{2}\delta\|\curl\psib_\ub\|^2 + \|\div\ub\|^2 \\
            &\geq \left(1-\frac{C_p}{2}\delta\right)\|\mub\|^2 + \|\curl\Pz\mub\|^2\\
            &\quad\quad + \frac{1}{2}\delta\|\curl\psib_\ub\|^2 + \frac{1}{2}\delta\|\grad\phi_\ub\|^2 + \left(1 - \frac{c_p\delta}{2}\right)\|\div\ub\|^2 \\
            &\geq \left(1-\frac{C_p}{2}\delta\right)\|\mub\|^2 + \|\curl\Pz\mub\|^2\\
            &\quad\quad + \frac{1}{2}\delta\|\ub\|^2 + \left(1 - \frac{c_p\delta}{2}\right)\|\div\ub\|^2 \\
            &\geq C(\|\mub\|^2_{\Sigmab} + \|\ub\|^2_{\Hdiv}), \quad (\text{suitably choose } \delta) \\
            \|\taub\|_{\Sigmab} + \|\vb\|_{\Hdiv} &\leq C(\|\mub\|_{\Sigmab} + \|\ub\|_{\Hdiv}),
        \end{aligned}
    \end{equation}
    which leads to~\eqref{eq:infsupa}.
\end{proof}
\begin{remark}[comparison with the standard Hodge Laplacian]
    It is instructive to compare the derivation of the inf-sup condition~\eqref{eq:infsupa} with that of its counterpart~\cite[][Theorem~4.9]{arnold_2018} for the standard boundary condition $\ub \cdot \nb = 0, \curl \mub \times \nb = \zerob$. Specifically, the standard case takes $\Sigmab = \Hzcurl$, and a new unknown in the harmonic $2$-form $\hform^2$ has to be introduced. 
\end{remark}

\section{Discretization}\label{sec:disc}
Let $\{\mathcal{T}_h\}$ be a \emph{shape-regular} and \emph{quasi-uniform} family of \emph{simplicial} triangulations of $\Omega \subset \R^3$ with $h$ standing for the (typical) mesh width. Let $k \in \mathbb{N}$ be a (fixed) polynomial degree. We write $\mathcal{P}_k$ for the space of $3$-variate polynomials of total degree $\leq k$ and $\boldsymbol{\mathcal{P}}_k = \mathcal{P}^3_k$. The following canonical finite element spaces are considered:
\begin{equation}\label{def:spaces}
    \begin{aligned}
        P_h &:= \{v \in H^1:v|_T \in \mathcal{P}_k\;\;\forall\,T\in\mathcal{T}_h\}, & P_{h,0} &:= P_h\cap H^1_0, \\
        \Sigmabh &:= \{\vb \in \Hcurl: \vb|_T \in \boldsymbol{\mathcal{P}}_{k-1} + \xb\times\boldsymbol{\mathcal{P}}_{k-1}\;\;\forall\,T\in\mathcal{T}_h\}, & \Sigmab_{h,0} &:= \Sigmab_{h}\cap\Hzcurl,\\
        \Vb_{h} &:= \{\vb \in \Hdiv: \vb|_T \in \boldsymbol{\mathcal{P}}_{k-1} + \xb\mathcal{P}_{k-1}\;\;\forall\,T\in\mathcal{T}_h\},  & \Vb_{h,0} &:= \Vb_h\cap\Hzdiv, \\
        S_h &:= \{v \in L^2: v|_T \in \mathcal{P}_{k-1}\;\;\forall\,T\in\mathcal{T}_h\}, & S_{h,0} &:= S_h\cap L^2_0,
    \end{aligned}
\end{equation}
representing the Lagrange element, the \Nedelec element, the Raviart-Thomas element, and the discontinuous element. In the notation of~\cite{arnold_2018}, they correspond to $\mathcal{P}^{-}_k\Lambda^l(\mathcal{T}_h),\,l= 0,1,2,3$, respectively. 

We consider the discretization of~\eqref{eq:hodgelap} that seeks $(\mub_h, \ub_h) \in \Sigmabh \times \Vb_{h,0}$ such that
\begin{equation}\label{eq:hodgelapdisc}
\begin{aligned}
    (\mub_h, \taub_h) - (\curl \taub_h, \ub_h) &= 0 &&\forall\,\taub_h \in \Sigmabh, \\
    (\curl\mub_h, \vb_h) + (\div\ub_h, \div\vb_h) &= \langle\fb, \vb_h\rangle && \forall\,\vb_h\in \Vb_{h,0}.
\end{aligned}
\end{equation}
In view of the bilinear form $\a$ defined in~\eqref{def:a}, the system~\eqref{eq:hodgelapdisc} can be rewritten as 
\begin{equation}\label{eq:hodgelapdiscrd}
    \a((\mub_h,\ub_h),(\taub_h,\vb_h)) = \langle\fb,\vb_h\rangle \quad \forall\,(\taub_h, \vb_h)\in\Sigmabh\times\Vb_{h,0}.
\end{equation}
In this section, we establish stability and \emph{a priori} error estimates for~\eqref{eq:hodgelapdisc}.
\subsection{Preliminaries}

\subsubsection{Regularity results}
We first recall some well-known regularity results on Lipschitz polygonal domains.
\begin{theorem}[regularity lift of scalar Poisson's equation\text{~\cite[][Corollary~2.6.7]{grisvard_2011}}]\label{thm:reglap}
    Let $D \subset \R^3$ be a bounded Lipschitz polygonal domain. Let $u \in H^1_0(D)$ or $H^1(D)/\R$ be the weak solution of $-\Delta u =f$ with $f \in L^2(D)$. It holds that $u \in H^s(D)$ and there exists a positive constant $C$ independent of $f$ such that $\|u\|_{H^s(D)} \leq C\|f\|$ where $s \in (3/2, 2]$ depends on the geometry of $D$. 
\end{theorem}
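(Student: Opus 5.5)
The statement is cited from Grisvard (Corollary~2.6.7), so the plan is to recover it from standard corner/edge singularity theory rather than from anything developed in this paper. First I would settle well-posedness and the basic $H^1$ bound. For the Dirichlet case, Lax--Milgram on $H^1_0(D)$ together with the Poincar\'e inequality gives $\|u\|_{H^1(D)}\le C\|f\|$. For the Neumann case I would work in $H^1(D)/\R$. The compatibility condition $(f,1)=0$ is implicitly required there, and I would note this; the Poincar\'e--Wirtinger inequality then gives the same bound. Everything afterwards is about lifting regularity.

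Next I would localize. Take a finite partition of unity $\{\chi_j\}$ subordinate to a cover of $\overline{D}$ by three kinds of neighbourhoods: interior balls, neighbourhoods of points on edges away from vertices, and neighbourhoods of vertices. For each $j$, the function $\chi_j u$ solves a Poisson problem with the same boundary condition and right-hand side $\chi_j f - 2\nabla\chi_j\cdot\nabla u - u\Delta\chi_j$. This right-hand side lies in $L^2$, with norm bounded by $C\|f\|$ thanks to the first step.
\begin{itemize}
\item Interior pieces are in $H^2$ by standard interior regularity.
\item Near a point of a face, flattening the boundary and using difference quotients also gives $H^2$.
\item Near an edge with dihedral opening $\omega\in(0,2\pi)$, I would pass to the model problem in a dihedral wedge, separate variables and take a Fourier transform along the edge. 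The singular functions are $r^{\pi/\omega}\sin(\pi\theta/\omega)$ in the Dirichlet case and the analogous cosine functions in the Neumann case. Since $\pi/\omega>1/2$, they belong to $H^{s}$ for every $s<1+\pi/\omega$.
\end{itemize}
Because $1+\pi/\omega>3/2$, each edge piece lies in $H^{s_e}$ for some $s_e\in(3/2,2]$.

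The main obstacle is the vertex neighbourhoods. There I would use Kondrat'ev/Mellin analysis on the polyhedral cone. The leading vertex singularity is $\rho^{\lambda}\Phi(\sigma)$, where $\lambda$ is determined by the first Laplace--Beltrami eigenvalue on the spherical cap cut out by the cone. A function of this form belongs to $H^{s}$ for $s<\lambda+3/2$. The crux is that $\lambda>0$ for any Lipschitz cone: in the Dirichlet case this is a strict domain-monotonicity argument for the Dirichlet eigenvalue; in the Neumann case it holds because the relevant eigenvalue is positive once constants are removed. This yields a vertex exponent $s_v>3/2$. Finally I would take $s:=\min(2,\min_e s_e,\min_v s_v)$ and glue the pieces with the partition of unity, which gives $\|u\|_{H^s(D)}\le C\|f\|$ with $s\in(3/2,2]$ depending only on the geometry. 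Since this is exactly the content of Grisvard's corollary, in the paper itself one would simply cite it.
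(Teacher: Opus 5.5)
The paper does not prove this statement; it cites Grisvard. The only available comparison is therefore with the singularity theory behind that citation, which is what you outline. The numerology that produces $s>3/2$ is correct:
\begin{itemize}
\item The Lipschitz assumption forces every dihedral angle to satisfy $\omega<2\pi$. So the first nontrivial edge exponent $\pi/\omega$ exceeds $1/2$: sine modes for Dirichlet, cosine modes for Neumann, with the constant Neumann mode harmless.
\item At a vertex, $\lambda(\lambda+1)=\mu$ with $\mu>0$ gives $\lambda>0$.
\end{itemize}
Hence both thresholds, $1+\pi/\omega$ and $3/2+\lambda$, exceed $3/2$.

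One step is false as written: in the Neumann case $\chi_j u$ does not satisfy the same boundary condition. Indeed $\partial_n(\chi_j u)=u\,\partial_n\chi_j$ on $\partial D$, which does not vanish for a generic partition of unity. You have two ways to repair this.
\begin{itemize}
\item Build the partition from cut-offs with $\partial_n\chi_j=0$ on $\partial D$. For a polyhedron this is possible: take functions of the distance to a vertex, functions of the distance to an edge line and of the coordinate along it, or functions of the tangential variables of a face. Each has vanishing normal derivative on the planar faces through that vertex, edge or face, provided its support is small enough not to meet other faces. This property survives products and normalization.
\item Alternatively, carry the inhomogeneous Neumann datum $u\,\partial_n\chi_j\in H^{1/2}$ through the model problems.
\end{itemize}

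A smaller imprecision concerns the vertex singular functions. The claim $\rho^\lambda\Phi(\sigma)\in H^s$ for all $s<\lambda+3/2$ holds only if $\Phi$ is smooth. Here $\Phi$ has corner singularities where the edges meet the unit sphere. So these functions lie in $H^s$ only for $s<\min(\lambda+3/2,\,1+\pi/\omega_e)$, taken over the adjacent edges. Your final $s$ is already a minimum over the edges, so the conclusion is unaffected.

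Beyond these points, the hard parts are only named, not carried out. These are the estimates uniform in the Fourier variable for the edge problem, and Kondrat'ev theory on a cone that itself has edges. That is acceptable here, since the paper simply cites the result.
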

\begin{theorem}[embedding of $\Hzcurl\cap \Hdiv$ into $\Hb^s$\text{~\cite[][Proposition~3.7]{amrouche_1998}}]\label{thm:embdd}
    Let $D \subset \R^3$ be a bounded Lipschitz polygonal domain. The continuous injection $\Hb_0(\mathrm{curl},D)\cap\Hb(\mathrm{div},D) \hookrightarrow \Hb^s(D)$ holds where $s \in (1/2, 1]$ depends on the geometry of $D$. In particular, it holds that $\hform^2 \subset \Hb^s$.
\end{theorem}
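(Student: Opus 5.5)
The plan is to reduce the embedding to the scalar regularity result of Theorem~\ref{thm:reglap} by means of a Helmholtz-type splitting, working throughout on the single domain $D$.

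\textbf{Splitting.} Take $\ub \in \Hb_0(\mathrm{curl},D)\cap\Hb(\mathrm{div},D)$. First let $\phi \in H^1_0(D)$ be the weak solution of
\[
\Delta\phi = \div\ub \quad\text{in } D .
\]
By Theorem~\ref{thm:reglap}, $\phi\in H^{s'}(D)$ with $s'\in(3/2,2]$, and therefore $\grad\phi\in\Hb^{s'-1}(D)$ with $s'-1\in(1/2,1]$. The remainder $\wb:=\ub-\grad\phi$ then has the following properties:
\begin{itemize}
\item $\wb\in\Hb_0(\mathrm{curl},D)$, since $\grad H^1_0(D)\subset \Hb_0(\mathrm{curl},D)$;
\item $\div\wb=0$ and $\curl\wb=\curl\ub\in\Lb^2(D)$.
\end{itemize}
So it suffices to prove the embedding for divergence-free fields with vanishing tangential trace. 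This covers $\hform^2$ in particular, because it is exactly the curl-free part of that class.

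\textbf{Vector potential for $\wb$.} The natural route is to extend $\wb$ by zero to a large ball $B\supset D$. Since $\wb\times\nb=\zerob$, the extension $\tilde\wb$ lies in $\Hb(\mathrm{curl},B)$ with $\curl\tilde\wb$ equal to the zero extension of $\curl\wb$. Its divergence, however, picks up the normal jump $\wb\cdot\nb$ on $\partial D$, which is only in $H^{-1/2}$. So it is better to work with $\zb:=\curl\wb$, which lies in $\Hb_0(\mathrm{div}0,D)$. By \eqref{eq:potvec}, or equivalently by the zero extension to $B$ and a Newton potential, one writes
\[
\zb=\curl\boldsymbol{\xi}, \qquad \boldsymbol{\xi}\in\Hb^1(B),\quad \div\boldsymbol{\xi}=0,
\]
obtained as the curl of the Newton potential of the extended $\zb$ on $\R^3$. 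Then $\wb-\boldsymbol{\xi}|_D$ is curl-free and divergence-free in $D$. Decomposing it via \eqref{def:hodgedecom1}, it equals $\grad\chi+\hb$, where the harmonic part $\hb$ is finite-dimensional. The scalar $\chi$ solves a Neumann problem
\[
\Delta\chi=0,\qquad \partial_{\nb}\chi=g:=(\wb-\boldsymbol{\xi})\cdot\nb .
\]

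\textbf{Main obstacle.} The hard step is controlling this Neumann datum, since $\wb\cdot\nb$ is not a priori better than $H^{-1/2}$. To avoid it, I would instead use the tangential condition directly. Write
\[
\wb=\boldsymbol{\xi}+\grad\theta \ \text{on } D, \qquad \curl(\wb-\boldsymbol{\xi})=\zerob ,
\]
handling the finitely many $\hform^1$ components through cuts or the fact that $\hform^1$ is finite-dimensional and consists of $\Hb^s$ fields (they are handled by the same Neumann argument with zero data). Then $\theta$ satisfies $\Delta\theta=0$ in $D$, and its tangential gradient on $\partial D$ equals $-\boldsymbol{\xi}_T$. Hence $\theta$ equals a function of $H^{1}(\partial D)$ on each boundary component, and $\theta$ is a Dirichlet solution with $H^1(\partial D)$ data. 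By the Jerison--Kenig/Dahlberg regularity on Lipschitz domains, which is exactly the $s>1/2$ phenomenon behind Theorem~\ref{thm:reglap}, this gives $\theta\in H^{3/2}(D)$ with a bound, provided constants on each component of $\partial D$ are fixed using the $\hform^2$-dimension. Therefore $\grad\theta\in\Hb^{1/2}$, and combined with the $\Hb^1$ potential $\boldsymbol{\xi}$ and the $H^{s'}$ gradient from the first step, we get
\[
\ub\in\Hb^{s}\quad\text{with}\quad s=\min(s'-1,\,1/2+\epsilon)>1/2 ,
\]
and norm control $\|\ub\|_{\Hb^s}\le C(\|\ub\|+\|\curl\ub\|+\|\div\ub\|)$ by the closed graph theorem. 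The extra $\epsilon$ beyond $1/2$ on polyhedra comes from replacing the Lipschitz estimate with the polyhedral corner/edge analysis as in Grisvard, which is what I expect to be the delicate point. In practice I would cite Grisvard's polyhedral Dirichlet regularity, the analogue of Theorem~\ref{thm:reglap} with boundary data, rather than redo it.
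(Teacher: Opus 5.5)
Your reduction to a divergence-free $\wb\in\Hzcurl$ and your construction of the $\Hb^1$ potential $\xib$ are fine. The step that should deliver $s>1/2$, however, does not work. Dirichlet data in $H^1(\partial D)$ give only $\theta\in H^{3/2}(D)$, i.e.\ $\grad\theta\in\Hb^{1/2}(D)$. That is exactly the borderline exponent the theorem excludes. Your proposed remedy, that Grisvard's corner/edge analysis supplies the extra $\epsilon$, cannot work. For data merely in $H^1(\partial D)$, $H^{3/2}$ is sharp even on smooth domains, because the trace of an $H^{3/2+\epsilon}$ function lies in $H^{1+\epsilon}(\partial D)$. The obstruction is therefore the data regularity you retained, not the geometry. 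Grisvard's polyhedral results, like Theorem~\ref{thm:reglap}, need data that are traces of $H^{s'}$ functions with $s'>3/2$, and you never establish this. As written, your $s=\min(s'-1,1/2+\epsilon)$ rests on an $\epsilon$ that is never produced. Your treatment of the $\hform^1$ components also has problems. It presupposes the unproved companion embedding $\hform^1\subset\Hb^s$, and it drops their tangential traces from the boundary datum of $\theta$.

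The missing idea is that the boundary datum of $\theta$ is the trace of an $H^2$ function, which you obtain by treating $\wb$ itself as you treated $\zb$.
\begin{itemize}
\item Extend $\wb$ by zero to the ball $B$. Then $\tilde\wb\in\Hb_0(\mathrm{curl},B)$ and $\curl(\tilde\wb-\xib)=\zerob$ on the simply connected $B$, so $\tilde\wb-\xib=\grad\tilde\theta$ with $\tilde\theta\in H^1(B)$.
\item On $B\setminus\overline D$ you have $\grad\tilde\theta=-\xib\in\Hb^1$, hence $\tilde\theta\in H^2(B\setminus\overline D)$. Extend it to some $\Theta\in H^2(B)$.
\item Then $\tilde\theta-\Theta\in H^1_0(D)$, and $-\Delta(\tilde\theta-\Theta)=\Delta\Theta\in L^2(D)$ because $\div(\wb-\xib)=0$. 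Theorem~\ref{thm:reglap} gives $\tilde\theta\in H^{s'}(D)$ with $s'>3/2$.
\end{itemize}
This also shows that the $\hform^1$ component of $\wb-\xib$ vanishes, since it is the gradient of a single-valued $H^1(D)$ function. The cuts and the fixing of constants on boundary components then become unnecessary. The repaired argument is essentially a proof of the regular decomposition $\ub=\zb+\grad\varphi$ with $\zb\in\Hb^1_0(D)$ and $\varphi\in H^1_0(D)$. The paper simply cites that decomposition and finishes in one line: $-\Delta\varphi=\div(\zb-\ub)\in L^2(D)$ together with Theorem~\ref{thm:reglap}.
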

\begin{proof}
    Here we show this result as a corollary of the regular decomposition (see, e.g.,~\cite[][Theorem~2.1]{hiptmair_2019}). For each $\ub \in \Hzcurl$, there exist $\zb \in \Hb^1_0(D)$ and $\varphi \in H^1_0(D)$ such that $\ub = \zb + \grad \varphi$. Since we assume that $\ub \in \Hdiv$, it holds that $-\Delta \varphi = \div(\zb-\ub) \in L^2(D)$. By Theorem~\ref{thm:reglap}, we obtain lifted regularity $\varphi \in H^{s+1}$ with $s \in (1/2,1]$. Hence, $\ub \in \Hb^{s}$, and one can show that $\|\ub\|_{\Hb^s} \leq C(\|\ub\|_\Hcurl + \|\ub\|_{\Hdiv})$.
\end{proof}

\subsubsection{Discrete de Rham complexes and interpolation approximations}\label{sec:interapprox}
The discrete spaces introduced in~\eqref{def:spaces} form sub-complexes (see~\eqref{eq:derhamcomplexprimaldisc}) of the $\Lb^2$ de Rham complexes~\eqref{eq:derhamcomplexprimal}\eqref{eq:derhamcomplexdual}. We denote by 
\begin{equation}
\ILG: \tilde{H}^1 \rightarrow P_h,\quad\IND:\tilde{\Hb}(\boldsymbol{\mathrm{curl}})\rightarrow \Sigmabh,\quad\IRT:\tilde{\Hb}(\mathrm{div})\rightarrow \Vb_h,\quad\PLG:L^2\rightarrow S_h
\end{equation}
the \emph{canonical interpolation} operators that are also projections. Here, $\tilde{H}^1 \subset H^1, \tilde{\Hb}(\boldsymbol{\mathrm{curl}})\subset\Hcurl, \tilde{\Hb}(\mathrm{div})\subset \Hdiv$ are subspaces with suitable regularity (see~\cite[][Theorem~5.3]{arnold_2006}) on which these canonical interpolations are well-defined. Importantly, the following diagram commutes:

\begin{equation}\label{eq:commuting}
    \begin{tikzcd}[row sep=1.5em, column sep=3em]
        0 \arrow[r]
        & \tilde{H}^1 \arrow[d, "\ILG"] \arrow[r, "\grad"]
        & \tilde{\Hb}(\boldsymbol{\mathrm{curl}}) \arrow[d, "\IND"] \arrow[r, "\curl"]
        & \tilde{\Hb}(\mathrm{div}) \arrow[d, "\IRT"] \arrow[r, "\div"]
        & L^2 \arrow[d, "\PLG"] \arrow[r]
        & 0\\
        0 \arrow[r]
        & P_{h} \arrow[r, "\grad"]
        & \Sigmab_{h} \arrow[r, "\curl"]
        & \Vb_{h} \arrow[r, "\div"]
        & S_{h} \arrow[r]
        & 0 
    \end{tikzcd}
\end{equation}
The counterpart of~\eqref{eq:commuting} with homogeneous boundary conditions holds as well.

Recall the approximation properties of $\IND$ and $\IRT$~\cite[][Theorem~5.3]{arnold_2006}: for $r, p \in \R$, it holds that whenever the right-hand sides are meaningful,
    \begin{equation}\label{eq:canonicalapp}
        \begin{aligned}
            \|\vb - \IND\vb\|_{\Lb^p} &\leq Ch^{r}|\vb|_{\Wb^{r,p}}, & 2/p < \hspace{-0.25cm}&&r\leq k, \quad 1<p\leq\infty,\\
            \|\curl(\vb - \IND\vb)\|_{\Lb^p} &\leq Ch^{r}|\curl\vb|_{\Wb^{r,p}}, &1/p < \hspace{-0.25cm}&&r\leq k,\quad 1\leq p\leq \infty,\\
            \|\vb - \IRT\vb\|_{\Lb^p} &\leq Ch^{r}|\vb|_{\Wb^{r,p}}, &1/p < \hspace{-0.25cm}&&r \leq k,\quad 1\leq p \leq \infty,\\
            \|\div(\vb - \IRT \vb)\|_{L^p} &\leq Ch^{r}|\div\vb|_{W^{r,p}}, &0\leq \hspace{-0.25cm}&&r \leq k,\quad 1\leq p\leq\infty.\\
        \end{aligned}
    \end{equation}
Notice that all these estimates hold locally on each element $T \in \mathcal{T}_h$.
\begin{remark}[commuting $\Lb^2$-stable quasi-interpolations]\label{rmk:quasiint}
 The domains of the \emph{canonical interpolations} $\ILG, \IND,\IRT$ entail certain regularity. There exist \emph{quasi-interpolation} operators (see, e.g.,~\cite[][Theorem~5.6]{arnold_2006} or~\cite[][Chapter~23]{ern_2021}), denoted by $\ILGav, \INDav, \IRTav$, that are $\Lb^2$-stable, have optimal \emph{global} approximation properties, and commute with the differential operators $\grad,\curl,\div$. In the FEEC framework, these quasi-interpolation operators constitute a \emph{bounded cochain projection}. Throughout the presentation, we mostly rely on $\ILG, \IND,\IRT$, while only Lemma~\ref{thm:curlproj} and~\ref{thm:sigmanormequivdisc} entail quasi-interpolations.
\end{remark}

We have the discrete de Rham complexes as subcomplexes of the $\Lb^2$ de Rham complex~\eqref{eq:derhamcomplexprimal}\eqref{eq:derhamcomplexdual}:
\begin{equation}\label{eq:derhamcomplexprimaldisc}
    \begin{tikzcd}[row sep=0.5em, column sep=1.5em]
        0 \arrow[r]
        & P_{h} \arrow[r, "\grad"]
        & \Sigmabh \arrow[r, "\curl"]
        & \Vb_{h} \arrow[r, "\div"]
        & S_{h} \arrow[r]
        & 0, \quad\quad
        0 \arrow[r]
        & P_{h,0} \arrow[r, "\grad"]
        & \Sigmab_{h,0} \arrow[r, "\curl"]
        & \Vb_{h,0} \arrow[r, "\div"]
        & S_{h} \arrow[r]
        & 0,
    \end{tikzcd}
\end{equation}
which induce the dual complexes respectively:
\begin{equation}\label{eq:derhamcomplexdualdisc}
    \begin{tikzcd}[row sep=0.5em, column sep=1.5em]
        0
        & P_h \arrow[l]
        & \Sigmabh \arrow[l, "-\divh"']
        & \Vb_h \arrow[l, "\curlh"']
        & S_h \arrow[l, "-\gradh"']
        & 0 \arrow[l],\quad\quad
        0
        & P_{h,0} \arrow[l]
        & \Sigmab_{h,0} \arrow[l, "-\divhz"']
        & \Vb_{h,0} \arrow[l, "\curlhz"']
        & S_h \arrow[l, "-\gradhz"']
        & 0 \arrow[l].
    \end{tikzcd}
\end{equation}
The adjoint operators are characterized as follows:
\begin{equation}\label{eq:discadj}
\begin{aligned}
    \divh:&& \Sigmabh &\rightarrow P_h, &(-\divh \taub_h, w_h) &:= (\taub_h, \grad w_h) &&\forall\, w_h\in P_h, \\
    \divhz:&& \Sigmab_{h,0} &\rightarrow P_{h,0},  &(-\divhz \taub_h, w_h) &:= (\taub_h, \grad w_h) &&\forall\, w_h\in P_{h,0}, \\
    \curlh:&&\Vb_h &\rightarrow \Sigmab_h, &(\curlh \vb_h, \taub_h) &:= (\vb_h, \curl \taub_h) &&\forall\,\taub_h \in \Sigmabh, \\
    \curlhz: &&\Vb_{h,0} &\rightarrow \Sigmab_{h,0}, &(\curlhz \vb_h, \taub_h) &:= (\vb_h, \curl \taub_h) &&\forall\, \taub_h \in \Sigmab_{h,0},\\
    \gradh: &&S_h &\rightarrow \Vb_h, &(-\gradh q_h, \vb_h) &:= (q_h, \div \vb_h) &&\forall\, \vb_h \in \Vb_h, \\
    \gradhz: &&S_h &\rightarrow \Vb_{h,0}, &(-\gradhz q_h, \vb_h) &:= (q_h, \div \vb_h) &&\forall\, \vb_h \in \Vb_{h,0}.
\end{aligned}
\end{equation}
Note that the discrete de Rham complexes~\eqref{eq:derhamcomplexprimaldisc}, with or without homogeneous boundary conditions, must be treated differently and induce two different Hodge decompositions (see~\eqref{def:hodgedecomdisc0}-\eqref{def:hodgedecomdisc3}). In contrast, the boundary conditions naturally appear in the dual $\Lb^2$ de Rham complex~\eqref{eq:derhamcomplexdual}.

Being Hilbert complexes,~\eqref{eq:derhamcomplexprimaldisc}\eqref{eq:derhamcomplexdualdisc} give rise to the following discrete Hodge decompositions with or without boundary constraints:
\begin{align}
    P_h &=\hform^0_h \otop \divh \Sigmabh, & P_{h,0} &=\divhz \Sigmab_{h,0}, \label{def:hodgedecomdisc0}\\
    \Sigmabh &= \overbrace{\grad P_h \otop \phantom{\hform^1_h\,\,}}^{=\Sigmabh\cap\Hcurlz}\hspace{-0.6cm}\underbrace{\hform^1_h \otop \curlh\Vb_h}_{=\Null(\divh)}, &\Sigmab_{h,0} &= \overbrace{\grad P_{h,0} \otop \phantom{\hform^1_{h,0}\,\,}}^{=\Sigmab_{h,0}\cap\Hcurlz}\hspace{-0.8cm}\underbrace{\hform^2_{h,0} \otop \curlhz\Vb_{h,0}}_{=\Null(\divhz)}, \label{def:hodgedecomdisc1}\\
    \Vb_{h} &= \overbrace{\curl \Sigmab_{h} \otop \phantom{\hform^2_{h}\,\,}}^{=\Vb_h\cap\Hdivz}\hspace{-0.6cm}\underbrace{\hform^2_h \otop \gradh S_h}_{=\Null(\curlh)}, & \Vb_{h,0} &= \overbrace{\curl \Sigmab_{h,0} \otop \phantom{\hform^2_{h,0}\,\,}}^{=\Vb_{h,0}\cap\Hdivz}\hspace{-0.8cm}\underbrace{\hform^1_{h,0} \otop \gradhz S_{h}}_{=\Null(\curlhz)}, \label{def:hodgedecomdisc2} \\
    S_h &= \div \Vb_h, &S_h  &= \hform^0_{h,0} \otop \div \Vb_{h,0}.\label{def:hodgedecomdisc3}
\end{align}
Since the discrete complexes in~\eqref{eq:derhamcomplexprimaldisc} are subcomplexes of the $\Lb^2$ de Rham complexes~\eqref{eq:derhamcomplexprimal}\eqref{eq:derhamcomplexdual} admitting a bounded cochain projection (see Remark~\ref{rmk:quasiint}), the harmonic form spaces are isomorphic to their discrete counterparts~\cite[][Theorem~5.1]{arnold_2012}. Hence, we have
\begin{equation}
    \hform^k \simeq \hform^k_h \simeq \hform^k_{h,0},\quad k = 0, 1,2.
\end{equation}
\begin{remark}[eliminating the unknown $\mub_h$]
    With the introduction of the discrete adjoint differential operators in~\eqref{eq:discadj}, the discrete problem~\eqref{eq:hodgelapdisc} is equivalent to seeking $\ub_h \in \Vb_{h,0}$ such that
    \begin{equation}\label{eq:vvpdiscsc}
        (\curlh \ub_h, \curlh \vb_h) + (\div \ub_h, \div \vb_h) = \langle\fb,\vb_h\rangle\quad\forall\,\vb_h \in \Vb_{h,0}.
    \end{equation}
\end{remark}

\subsubsection{Some discrete decompositions}
In this section, we first show the discrete counterpart of Lemma~\ref{thm:potscalarvec} and Corollary~\ref{thm:helmdecomp}.
\begin{lemma}[discrete scalar/vector potentials]\label{thm:potscalarvecdisc}
\begin{align}
    \Sigmab_{h,0} \cap \Hcurlz &\subset \grad P_h, \label{eq:potscalardisc}\\
    \Vb_{h,0}\cap\Hdivz &\subset \curl \Sigmabh. \label{eq:potvecdisc}
\end{align}
\end{lemma}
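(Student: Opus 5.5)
Both inclusions will come from combining the continuous potentials of Lemma~\ref{thm:potscalarvec} with the commuting diagram~\eqref{eq:commuting} for the canonical interpolations. I treat~\eqref{eq:potscalardisc} first.

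Let $\taub_h \in \Sigmab_{h,0}\cap\Hcurlz$. Then $\taub_h \in \Hzcurlz$, so by~\eqref{eq:potscalar} there is $w \in H^1$ with $\taub_h = \grad w$. Since $\grad w$ is piecewise polynomial and $\grad w \in \Hcurl$, the function $w$ is piecewise polynomial of degree $\le k$ on each element. It is also continuous, being in $H^1$ with piecewise polynomial restrictions. Hence $w \in P_h$ directly, and no interpolation is needed. For safety one can alternatively argue with the diagram: $w$ is smooth enough elementwise for $\ILG$ to be defined, and
\[
\grad \ILG w = \IND \grad w = \IND \taub_h = \taub_h,
\]
because $\IND$ is a projection onto $\Sigmabh$. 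The only subtlety is the regularity needed to apply $\ILG$. This is harmless, since $w$ is continuous and piecewise $\mathcal{P}_k$, so its vertex, edge, face and cell degrees of freedom are well defined.

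For~\eqref{eq:potvecdisc}, let $\vb_h \in \Vb_{h,0}\cap\Hdivz$. By~\eqref{eq:potvec} there is $\psib \in \Hcurl$ with $\curl\psib = \vb_h$. Here $\psib$ need not be piecewise polynomial, so I cannot conclude directly. The plan is to apply $\IND$ and use $\curl\IND\psib = \IRT\curl\psib = \IRT\vb_h = \vb_h$. This requires $\psib$ to lie in the domain $\tilde{\Hb}(\boldsymbol{\mathrm{curl}})$ of $\IND$, and that is the main obstacle.

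To resolve it, I choose $\psib$ more carefully. Take $\psib$ in $\Hcurl \cap \Hzdiv$ with $\div\psib = 0$, obtained from a Coulomb-gauged potential via the Hodge decomposition~\eqref{def:hodgedecom2}, or from the construction on a ball $\tilde\Omega$ as in the proof of Lemma~\ref{thm:potscalarvec}. By Theorem~\ref{thm:embdd}, or its $\Hcurl\cap\Hzdiv$ analogue, we then get $\psib \in \Hb^s$ with $s>1/2$, while $\curl\psib = \vb_h$ is piecewise polynomial. Functions in $\Hb^s$, $s>1/2$, whose curl lies in $\Lb^p$ with $p>2$ (here even $\Lb^\infty$) are in the domain of the canonical \Nedelec interpolation, as in~\cite[][Theorem~5.3]{arnold_2006}.

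If this regularity route is delicate, I would instead use the $\Lb^2$-stable commuting quasi-interpolation $\INDav$ of Remark~\ref{rmk:quasiint}. Since it is a projection onto $\Sigmabh$ commuting with $\curl$,
\[
\curl\INDav\psib = \IRTav\curl\psib = \IRTav\vb_h = \vb_h.
\]
This bypasses the regularity issue entirely. In either case $\vb_h \in \curl\Sigmabh$, which proves~\eqref{eq:potvecdisc}.
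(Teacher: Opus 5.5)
Your proof is correct, but it takes a different route from the paper.

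\textbf{The paper's route.} The paper repeats the topological argument at the discrete level. It extends $\mathcal{T}_h$ to a triangulation $\tilde{\mathcal{T}}_h$ of a convex polyhedron $\tilde\Omega\supset\Omega$ and extends the discrete field by zero. It then applies the discrete Hodge decomposition on $\tilde\Omega$, where the discrete harmonic forms vanish because $\hform^k_{h,0}\cong\hform^k$ and $\tilde\Omega$ is topologically trivial. Both inclusions come out of this one discrete argument, which mirrors the proof of Lemma~\ref{thm:potscalarvec}.

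\textbf{Your route.} You take the continuous potentials of Lemma~\ref{thm:potscalarvec} as given and transfer them to the discrete level.
\begin{itemize}
\item In the scalar case, an $H^1$ potential of a curl-free field in $\Sigmab_{h,0}$ is elementwise in $\mathcal{P}_k$ by local exactness. Being in $H^1$, it is also continuous, hence in $P_h$. No interpolation is needed, so your commuting-diagram remark there is redundant.
\item In the vector case, you apply the commuting cochain projections, $\curl\INDav\psib=\IRTav\curl\psib=\IRTav\vb_h=\vb_h$.
\end{itemize}

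\textbf{What each buys.} Your version needs no extended mesh and no count of discrete harmonic forms on $\tilde\Omega$. It only requires that $\INDav$ and $\IRTav$ be projections commuting with $\curl$ on all of $\Hcurl$. The paper asserts this in Remark~\ref{rmk:quasiint}, and it also underlies the harmonic-form isomorphism the paper itself invokes. The paper's version stays entirely discrete and treats both inclusions in one uniform way.

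\textbf{Which variant to keep for~\eqref{eq:potvecdisc}.} Keep the quasi-interpolation one. The canonical-interpolant variant needs extra machinery: a gauged potential, an $\Hb^s$ embedding, and a domain-of-definition result for $\IND$.

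\textbf{A minor slip.} The potential in $\Hcurl\cap\Hzdivz$ you want there comes from~\eqref{def:hodgedecom1}, not~\eqref{def:hodgedecom2}. You drop the $\grad H^1$ and $\hform^1$ components of $\psib$, leaving an element of $\curl\Hzcurl$. Using~\eqref{def:hodgedecom2} only gives a divergence-free potential without the normal boundary condition, and hence no $\Hb^s$ regularity.
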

\begin{proof}
    We only show~\eqref{eq:potscalardisc} while~\eqref{eq:potvecdisc} follows similarly. Let $\tilde{\Omega} \supset \Omega$ be a convex polygonal domain containing $\Omega$ and partitioned by $\tilde{\mathcal{T}}_h$ extending $\mathcal{T}_h$. Given $\taub_h \in \Sigmab_h$, denote by $\tilde{\taub}_h \in \Sigmab_h(\tilde{\mathcal{T}}_h)$ the zero extension of $\taub_h$ to $\tilde{\Omega}$. Notice that $\tilde{\Omega}$ has no void ($\hform^2_{h,0} = \{\zerob\}$). We have by~\eqref{def:hodgedecomdisc2} that $\grad P_{h,0}(\tilde{\mathcal{T}}_h) = \Sigmab_{h,0}(\tilde{\mathcal{T}}_h) \cap \Hcurlz(\tilde{\mathcal{T}}_h)$. Hence, we can find a $\tilde{w}_h \in P_{h,0}(\tilde{\mathcal{T}}_h)$ such that $\tilde{\taub}_h = \grad \tilde{w}_h$. Clearly, $\taub_h = \grad \tilde{w}_h|_\Omega \in \grad P_h$.
\end{proof}
\noindent Combining the previous lemma with the decompositions~\eqref{def:hodgedecomdisc1} and~\eqref{def:hodgedecomdisc2} and the discrete \Poincare inequality~\cite[][Theorem~4.6]{arnold_2018} yields the next corollary.
\begin{corollary}[discrete Helmholtz decomposition]\label{thm:helmdecompdisc} The following holds:

    \noindent $(i)$ given $\taub_h \in \Sigmab_{h,0}$, there exist $w_{h} \in P_h$ and $\vb_{h} \in \Vb_{h,0}$ such that 
    \begin{equation}\label{eq:helmdecompdisc1}
        \taub_{h} = \curlhz \vb_{h} + \grad w_{h},\quad\quad (\curlhz \vb_{h}, \grad w_{h}) = 0,\quad\quad \|\curlhz \vb_h\|\leq C\|\curl \taub_h\|;
    \end{equation}
    
    \noindent $(ii)$ given $\vb_h \in \Vb_{h,0}$, there exist $\psib_{h} \in \Sigmab_h$ and $\phi_{h} \in S_{h}$ such that
    \begin{equation}\label{eq:helmdecompdisc2}
        \vb_h = \curl\psib_{h} + \gradhz \phi_{h} ,\quad\quad(\curl\psib_{h}, \gradhz \phi_{h}) = 0, \quad\quad \|\gradhz \phi_h\| \leq C\|\div \vb_h\|.
    \end{equation}
\end{corollary}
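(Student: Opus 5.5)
For $(i)$, I would start from the decomposition of $\Sigmab_{h,0}$ in~\eqref{def:hodgedecomdisc1}: every $\taub_h \in \Sigmab_{h,0}$ splits $\Lb^2$-orthogonally as $\taub_h = \grad w^0_h + \hb_h + \curlhz \vb_h$ with $w^0_h \in P_{h,0}$, $\hb_h \in \hform^2_{h,0}$ and $\vb_h \in \Vb_{h,0}$. The harmonic part $\hb_h$ lies in $\Sigmab_{h,0}$ and is curl-free, so Lemma~\ref{thm:potscalarvecdisc}, inclusion~\eqref{eq:potscalardisc}, gives $\hb_h = \grad \tilde w_h$ with $\tilde w_h \in P_h$. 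Setting $w_h := w^0_h + \tilde w_h \in P_h$ yields $\taub_h = \curlhz\vb_h + \grad w_h$. Both $\grad w^0_h$ and $\hb_h$ are orthogonal to $\curlhz \Vb_{h,0}$, and this gives the orthogonality in~\eqref{eq:helmdecompdisc1}. For the bound, $\curlhz\vb_h$ lies in $\Null(\divhz)\cap(\hform^2_{h,0})^\perp$, i.e.\ in the orthogonal complement of the kernel of $\curl$ in $\Sigmab_{h,0}$. The discrete Poincar\'e inequality~\cite[][Theorem~4.6]{arnold_2018} for the complex with boundary conditions then gives $\|\curlhz\vb_h\| \le C\|\curl\curlhz\vb_h\| = C\|\curl\taub_h\|$, because $\curl\grad w_h = \zerob$. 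The constant is independent of $h$ thanks to the bounded cochain projection of Remark~\ref{rmk:quasiint}.

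For $(ii)$, I would use the decomposition of $\Vb_{h,0}$ in~\eqref{def:hodgedecomdisc2}: $\vb_h = \curl\sigmab_h + \hb_h + \gradhz\phi_h$ with $\sigmab_h\in\Sigmab_{h,0}$, $\hb_h \in \hform^1_{h,0}$ and $\phi_h \in S_h$. The divergence-free part $\curl\sigmab_h + \hb_h$ lies in $\Vb_{h,0}\cap\Hdivz$, since $\hb_h\in\Null(\div)$ by definition of discrete harmonic forms. By~\eqref{eq:potvecdisc} it therefore equals $\curl\psib_h$ for some $\psib_h\in\Sigmab_h$. Orthogonality of $\curl\psib_h$ to $\gradhz S_h$ is inherited from the orthogonal sum; equivalently, $(\curl\psib_h,\gradhz\phi_h) = -(\div\curl\psib_h,\phi_h)=0$.

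For the bound, $\gradhz\phi_h$ belongs to the orthogonal complement of $\Null(\div)$ in $\Vb_{h,0}$. The discrete Poincar\'e inequality gives $\|\gradhz\phi_h\|\le C\|\div\gradhz\phi_h\| = C\|\div\vb_h\|$, again with $h$-uniform constant. The only delicate point is this $h$-independence of the discrete Poincar\'e constants, which I take from the existence of $\Lb^2$-bounded commuting quasi-interpolations (\cite[][Theorem~5.6]{arnold_2006}).
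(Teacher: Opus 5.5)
Your proposal is correct and follows essentially the same route as the paper. The paper obtains the corollary by combining the discrete Hodge decompositions~\eqref{def:hodgedecomdisc1}--\eqref{def:hodgedecomdisc2}, Lemma~\ref{thm:potscalarvecdisc} to absorb the discrete harmonic components into $\grad P_h$ (resp.\ $\curl\Sigmab_h$), and the $h$-uniform discrete Poincar\'e inequality; you have written out the details it leaves implicit, including the orthogonality and the identities $\curl\curlhz\vb_h=\curl\taub_h$ and $\div\gradhz\phi_h=\div\vb_h$.
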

The next lemma states a regular decomposition on the local \Nedelec element space 
\begin{equation}
    \Sigmab_k(T) := \boldsymbol{\mathcal{P}}_{k-1}(T) + \xb\times\boldsymbol{\mathcal{P}}_{k-1}(T), \quad T\in \mathcal{T}_h,\,k\in\mathbb{N}.
\end{equation}
Recall that the global space $\Sigmabh = \{\vb \in \Hcurl: \vb|_T \in \Sigmab_k(T)\;\forall\,T\in\mathcal{T}_h\}$.
\begin{lemma}[local regular decomposition of $\Sigmab_k(T)$]\label{thm:lclregdecomp}
    Given $T \in \mathcal{T}_h$, for any $\xib_h \in \Sigmab_k(T)$ there exist $\hat{\xib}_h \in \Sigmab_k(T)$ and $\hat{\varphi}_h \in \mathcal{P}_k(T)$ such that $\xib_h = \hat{\xib}_h + \grad \hat{\varphi}_h$ and
    \begin{equation}\label{eq:lclregdecompstab}
        \|\hat{\xib}_h\| + \|\grad \hat{\varphi}_h\| \leq C\|\xib_h\|, \quad \|\grad \hat{\xib}_h\| \leq C\|\curl\xib_h\| 
    \end{equation}
    where $C$ is a positive constant independent of $\xib_h$.
\end{lemma}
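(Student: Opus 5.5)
The plan is to work on a single reference element and transfer the result to $T$ by scaling. First I prove the decomposition on the reference tetrahedron $\hat T$ with constants depending only on $k$. Then I pull back to $T$ via the covariant Piola transform $\xib_h = B_T^{-\top}\hat{\xib}\circ F_T^{-1}$, which preserves $\Sigmab_k$ and maps gradients of $\mathcal{P}_k$ to gradients of $\mathcal{P}_k$. Shape regularity will make the constants uniform in $T$.

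On $\hat T$, given $\xib \in \Sigmab_k(\hat T)$, I use the exactness of the local polynomial complex $\mathcal{P}_k \to \Sigmab_k \to \mathcal{P}^-_k\Lambda^2 \to \mathcal{P}_{k-1}$ on the contractible set $\hat T$. I define $\hat{\xib}$ through a fixed linear right inverse of $\curl$ on $\curl\Sigmab_k(\hat T)$. A concrete choice is the Koszul operator $\kappa\wb := -\xb\times\wb$, with the origin at the barycenter, applied to $\curl\xib$. For divergence-free $\wb$ with polynomial components, the homotopy formula gives $\curl(\kappa \wb)$ equal to $\wb$ up to the standard degree-dependent scaling on homogeneous components. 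The scaled version, $\hat{\xib} := \mathcal{K}\curl\xib$ with $\mathcal{K}$ acting on homogeneous parts of degree $j$ by $\wb_j \mapsto -\tfrac{1}{j+2}\xb\times \wb_j$, satisfies $\curl\hat{\xib} = \curl\xib$. It also lies in $\boldsymbol{\mathcal{P}}_{k-1}+\xb\times\boldsymbol{\mathcal{P}}_{k-1}$, since $\curl\xib\in\boldsymbol{\mathcal{P}}_{k-1}$. Alternatively, I can simply take any linear right inverse of $\curl$ on this finite-dimensional space. Then $\xib - \hat{\xib}$ is curl-free in $\Sigmab_k(\hat T)$, so by exactness it equals $\grad\hat\varphi$ with $\hat\varphi\in\mathcal{P}_k$, normalized to have mean zero.

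For the bounds on $\hat T$, I use the fact that $\hat{\xib}$ depends linearly on $\curl\xib$, and hence linearly on $\xib$, within finite-dimensional spaces. Equivalence of norms then gives $\|\grad\hat{\xib}\|\le C\|\curl\xib\|$; more strongly, $\|\hat{\xib}\|_{\Hb^1(\hat T)}\le C\|\curl\xib\|$ because the map is defined on $\curl\Sigmab_k$. I also get $\|\curl\xib\|\le C\|\xib\|$ by an inverse estimate, which is harmless on the reference element. Hence $\|\hat{\xib}\|\le C\|\xib\|$, and the triangle inequality gives $\|\grad\hat\varphi\|\le C\|\xib\|$.

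The main obstacle is the scaling step: the two estimates must scale consistently. Under the covariant Piola map, $\|\xib_h\|_{T}\simeq h^{3/2-1}\|\hat{\xib}\|_{\hat T}$ and $\|\curl\xib_h\|_T\simeq h^{3/2-2}\|\widehat{\curl\xib}\|_{\hat T}$. For $\grad\hat{\xib}_h$, the chain rule gives $\|\grad\hat{\xib}_h\|_T\simeq h^{-1}\cdot h^{3/2-1}\|\hat\grad\hat{\xib}\|_{\hat T}$, which matches the scaling of $\curl$. For the $L^2$ bound, the powers of $h$ match directly. Affine equivalence with Jacobian bounds $\|B_T\|\le Ch$ and $\|B_T^{-1}\|\le Ch^{-1}$ from shape regularity therefore transfers both inequalities with $h$-independent constants. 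One point needs care: the Piola map commutes with the decomposition. I handle this by defining the decomposition on $T$ as the pushforward of the reference one, rather than by applying the Koszul operator directly on $T$. This makes linearity and the identity $\xib_h=\hat{\xib}_h+\grad\hat\varphi_h$ immediate.
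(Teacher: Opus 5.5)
Your proof is correct. It shares the paper's skeleton: apply a linear right inverse of $\curl$ to $\curl\xib_h$, then recover the remainder as $\grad\hat{\varphi}_h$ with $\hat{\varphi}_h\in\mathcal{P}_k$ by exactness of the local polynomial complex. The differences are the choice of right inverse and, more importantly, how the bounds are obtained.

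The paper applies the smoothed Poincar\'e lifting $\mathsf{R}$ of Costabel--McIntosh directly on $T$. It reads both estimates off the Sobolev mapping properties of $\mathsf{R}$: $\|\hat{\xib}_h\|\leq C\|\curl\xib_h\|_{\Hb^{-1}(T)}\leq C\|\xib_h\|$ and $\|\grad\hat{\xib}_h\|\leq C\|\curl\xib_h\|$. You instead use the scaled Koszul operator on $\hat T$, or any linear right inverse. On polynomials the Koszul operator is exactly the classical unsmoothed Poincar\'e operator centred at the barycenter, and $\mathsf{R}$ is its average over base points. Your bounds come from finite-dimensional norm equivalence plus covariant/contravariant Piola scaling.

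Your route is more elementary, and it makes the $h$-independence of $C$ fully explicit. The powers of $h$ cancel exactly, and what remains is controlled by $\|B_T\|\,\|B_T^{-1}\|$ through shape regularity. The paper leaves this point implicit: one has to know that the constants of $\mathsf{R}$ on $T$ are dilation invariant and depend only on the chunkiness of $T$.

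In exchange, the paper's argument has its own advantages. It needs no reference element and no inverse estimate for the $\Lb^2$ bound. It controls $\hat{\xib}_h$ by the weaker quantity $\|\curl\xib_h\|_{\Hb^{-1}(T)}$. Its constants are independent of $k$, since $\mathsf{R}$ does not depend on the polynomial degree, which matters for $p$-version analysis. Your norm-equivalence constants depend on $k$ in an uncontrolled way, which is harmless here because $k$ is fixed.
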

\begin{proof}
    Let $\mathsf{R}$ be the \emph{smoothed \Poincare lifting} devised in~\cite[][Definition~3.1 with $l=2$]{costabel_2009} which maps a $2$-form to a $1$-form and serves as a right-inverse of $\curl$. That is, $\curl \mathsf{R}\,\ub = \ub$ for any $\ub \in \Hb(\div0,T)$. In particular, $\mathsf{R}$ can be extended to a bounded operator from $\Hb^s(T)$ to $\Hb^{s+1}(T)$ for any $s \in \R$~\cite[][Corollary~3.4]{costabel_2009}. Importantly, $\mathsf{R}$ maps $\boldsymbol{\mathcal{P}}_{k-1}(T)$ to $\Sigmab_k(T)$ by its nature~\cite[][Section~3]{hiptmair_2009}. Consider the following decomposition
    \begin{equation}
        \xib_h = \underbrace{\mathsf{R}\,\curl\xib_h}_{\hat{\xib}_h} + \underbrace{(\xib_h - \mathsf{R}\,\curl\xib_h)}_{\grad \hat{\varphi}_h}
    \end{equation}
    where $\hat{\xib}_h \in \Sigmab_k(T)$ and $\hat{\varphi}_h \in \mathcal{P}_k(T)$. Note that $\curl \xib_h \in \boldsymbol{\mathcal{P}}_{k-1}(T)$ and thus $\hat{\xib}_h \in \Sigmab_k(T)$. Existence of such $\hat{\varphi}_h \in \mathcal{P}_k(T)$ is justified due to the fact that $\curl (\xib_h - \hat{\xib}_h) = \zerob$ and $\xib_h - \hat{\xib}_h \in \Sigmab_k(T)$. By the boundedness of $\mathsf{R}$~\cite[][Corollary~3.4]{costabel_2009}, we have
    \begin{equation}
        \|\hat{\xib}_h\| \leq C\|\curl \xib_h\|_{\Hb^{-1}(T)} \leq C\|\xib_h\|, \quad \|\grad \hat{\xib}_h\| \leq C\|\curl \xib_h\|.
    \end{equation}
    The estimate of $\|\grad\hat{\varphi}_h\|$ follows from the triangle inequality.
\end{proof}

\subsubsection{A curl-elliptic projection}
Denote two subspaces of $\Hcurl$:
\begin{equation}\label{def:XXh}
    \Xb := \Hcurl \cap \Hcurlz^\perp,\quad\Xb_h := \Sigmabh\cap (\Sigmabh\cap\Hcurlz)^\perp.
\end{equation}
In view of the decompositions~\eqref{def:hodgedecom1}\eqref{def:hodgedecomdisc1}, indeed $\Xb = \Hcurl \cap \curl\Hzcurl$ and $\Xb_h = \curlh \Vb_h$.
Introduce the curl-elliptic projection $\PND:\Hcurl \rightarrow \Xb_h$ by
\begin{equation}\label{def:proj}
    \begin{aligned}
        (\curl(\PND \vb - \vb), \curl\psib_h)  &= 0&&\forall\,\psib_h\in \Sigmabh.
    \end{aligned}
\end{equation}
The following lemma establishes the approximation properties of $\PND$.
\begin{lemma}[approximation properties of $\PND$]\label{thm:curlproj}
    Consider any $\vb \in \Hcurl$. There exists a positive constant $C$ independent of $\vb$ such that the following holds true whenever the right-hand sides are well-defined.
    
    \noindent \emph{($i$)} It holds that 
    \begin{equation}\label{eq:curlprojerrcurl}
        \|\curl(\vb - \PND\vb)\|\leq Ch^{r}|\curl\vb|_{\Hb^{r}}
    \end{equation}
    where $r\in \R, 0 \leq r \leq k$.
    
    \noindent \emph{($ii$)} It holds that
    \begin{equation}\label{eq:superconv}
        (\curl (\vb - \PND\vb), \wb_h) \leq Ch^{r+1/2}|\curl\vb|_{\Hb^{r}}\|\wb_h\|_{\Hdiv}\quad\forall\,\wb_h\in \Vb_{h}
    \end{equation}
    where $r\in \R, 0 \leq r \leq k$.

    \noindent \emph{($iii$)} Assume $\vb \in \Xb$ in addition. It holds that
    \begin{equation}\label{eq:curlprojerrl2}
        \|\vb - \PND\vb\| \leq Ch^{r}\|\vb\|_{\Hb^{r+1/2}}
    \end{equation}
    where $r\in \R, 1/2 \leq r \leq k$.
\end{lemma}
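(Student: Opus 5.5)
The plan treats the three parts in order: each uses the previous one together with the commuting quasi-interpolations of Remark~\ref{rmk:quasiint} and the discrete Hodge decompositions \eqref{def:hodgedecomdisc1}--\eqref{def:hodgedecomdisc2}.

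\textbf{Part ($i$).} By \eqref{def:proj}, $\curl\PND\vb$ is the $\Lb^2$-orthogonal projection of $\curl\vb$ onto $\curl\Sigmabh$, so $\|\curl(\vb-\PND\vb)\|\le\|\curl\vb-\curl\INDav\vb\|$. By commutativity, $\curl\INDav\vb=\IRTav\curl\vb$. The optimal global approximation of $\IRTav$ then gives $Ch^r|\curl\vb|_{\Hb^r}$ for $0\le r\le k$.

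\textbf{Part ($ii$).} Set $\boldsymbol{e}:=\curl(\vb-\PND\vb)$ and split $\wb_h\in\Vb_h$ by \eqref{def:hodgedecomdisc2} as $\wb_h=\curl\sigmab_h+\zb_h$ with $\zb_h\in\hform^2_h\otop\gradh S_h=\Null(\curlh)$. Here $\sigmab_h$ is shorthand for an element of $\Sigmabh$; it is not a paper macro and would have to be written out. The part $\curl\sigmab_h$ is annihilated by Galerkin orthogonality, so only $(\boldsymbol{e},\zb_h)$ remains, with $\|\zb_h\|\le\|\wb_h\|$ and $\div\zb_h=\div\wb_h$.
\begin{itemize}
\item Introduce the continuous counterpart $\zb\in\Hzcurl\cap\Hdiv$ of $\zb_h$, lying in $\grad H^1_0\otop\hform^2=\Hzcurlz$, with $\div\zb=\div\zb_h$ and harmonic part matched to that of $\zb_h$.
\item Since $\boldsymbol{e}\in\curl\Hcurl$, and this space is $\Lb^2$-orthogonal to $\Hzcurlz$ by \eqref{def:hodgedecom2}, we get $(\boldsymbol{e},\zb)=0$. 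Hence $(\boldsymbol{e},\zb_h)=(\boldsymbol{e},\zb_h-\zb)$.
\item The standard FEEC estimate for discretely curl-free fields gives $\|\zb-\zb_h\|\le C\|\zb-\IRTav\zb\|$. Theorem~\ref{thm:embdd} gives $\zb\in\Hb^s$ with $s>1/2$ and $\|\zb\|_{\Hb^s}\le C\|\div\wb_h\|$, so $\|\zb-\zb_h\|\le Ch^{1/2}\|\wb_h\|_{\Hdiv}$.
\end{itemize}
Combining this with part ($i$) yields \eqref{eq:superconv}.

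\textbf{Part ($iii$).} Write $\vb-\PND\vb=(\vb-\INDav\vb)+(\INDav\vb-\PND\vb)$. The first term is $O(h^r\|\vb\|_{\Hb^r})$. Decompose $\INDav\vb-\PND\vb=\grad p_h+\xb_h$ with $\xb_h\in\Xb_h$.
\begin{itemize}
\item \emph{Gradient part.} Both $\PND\vb$ and $\vb$ are orthogonal to $\grad P_h$, the latter because $\vb\in\Xb$. Hence $\grad p_h$ is the projection of $\INDav\vb-\vb$ onto $\grad P_h$, and it is bounded by the first term.
\item \emph{$\Xb_h$ part.} The discrete Poincar\'e inequality alone would cost a full derivative on $\curl\vb$, so I use duality instead. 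Choose $\boldsymbol{y}_h\in\Vb_h\cap\Hdivz$, a subset of $\curl\Sigmabh$, with $\curlh\boldsymbol{y}_h=\xb_h$ and $\|\boldsymbol{y}_h\|_{\Hdiv}\le C\|\xb_h\|$; this is possible since $\Xb_h=\curlh\Vb_h$. Then
\[
\|\xb_h\|^2=(\curl\xb_h,\boldsymbol{y}_h)=(\curl(\INDav\vb-\vb),\boldsymbol{y}_h)+(\curl(\vb-\PND\vb),\boldsymbol{y}_h).
\]
\item The last term is bounded by part ($ii$) with $r-1/2$ in place of $r$, giving $Ch^{r}|\curl\vb|_{\Hb^{r-1/2}}\|\xb_h\|\le Ch^r\|\vb\|_{\Hb^{r+1/2}}\|\xb_h\|$. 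This explains the restriction $r\ge1/2$.
\end{itemize}

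\textbf{Main obstacle.} The remaining term $(\curl(\INDav\vb-\vb),\boldsymbol{y}_h)=(\IRTav\curl\vb-\curl\vb,\boldsymbol{y}_h)$ must also be bounded by $Ch^r\|\vb\|_{\Hb^{r+1/2}}\|\xb_h\|$. A naive bound loses a half order. I would first try the same device as in part ($ii$): replace $\boldsymbol{y}_h$ by a continuous divergence-free companion $\boldsymbol{y}\in\Hb^{s}$, $s>1/2$, with $\|\boldsymbol{y}_h-\boldsymbol{y}\|\le Ch^{1/2}\|\xb_h\|$. This gains $h^{1/2}$ on the discrepancy term. On the main term one then integrates by parts, $(\curl(\INDav\vb-\vb),\boldsymbol{y})$, moving the curl onto the smoother dual field so that only $\|\vb-\INDav\vb\|=O(h^r\|\vb\|_{\Hb^r})$ appears. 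Making this duality step rigorous, in particular the boundary terms and the regularity of $\boldsymbol{y}$ on a general Lipschitz polyhedron, is where I expect the real work.
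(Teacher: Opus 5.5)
\paragraph{Parts ($i$) and ($ii$).} These are the paper's argument, with a few slips.

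In ($ii$), the harmonic component of $\zb$ has zero divergence, so the correct bound is $\|\zb\|_{\Hb^s}\le C\|\wb_h\|_{\Hdiv}$, not $C\|\div\wb_h\|$. Your single ``standard FEEC estimate'' $\|\zb-\zb_h\|\le C\|\zb-\IRTav\zb\|$ should be split into two estimates, as the paper does:
\begin{itemize}
\item the mixed-Poisson error $\|\grad\phi-\gradh\phi_h\|\le\|(I-\IRTav)\grad\phi\|$;
\item the harmonic gap $\|\hb_h-\Pi^{\hform^2}\hb_h\|\le\|(I-\IRTav)\Pi^{\hform^2}\hb_h\|$.
\end{itemize}

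In ($iii$), the curl-free part of $\INDav\vb-\PND\vb$ is $\grad p_h+\jb_h$ with $\jb_h\in\hform^1_h$, not just $\grad p_h$. Your orthogonality argument covers this unchanged, because $\vb\in\Xb$ is orthogonal to all of $\Hcurlz$. Also, $\Vb_h\cap\Hdivz\not\subset\curl\Sigmabh$ when $\Omega$ has voids. Take $\boldsymbol{y}_h\in\curl\Sigmabh$ directly. Then $(\curl(\vb-\PND\vb),\boldsymbol{y}_h)=0$ by \eqref{def:proj}, and part ($ii$) is not needed there.

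\paragraph{The gap in ($iii$).} It is the step you flag yourself, and that step is the whole difficulty. The ``same device as in part ($ii$)'' does not transfer.

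In ($ii$) the discrete field lay in $\Null(\curlh)$. Its continuous companion came from a scalar mixed Poisson problem plus harmonic forms, with no variational crime.

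Here $\boldsymbol{y}_h=\curl\xib_h$, where $\xib_h\in\Xb_h$ solves $(\curl\xib_h,\curl\taub_h)=(\xb_h,\taub_h)$ for all $\taub_h\in\Sigmabh$. The natural companion is $\boldsymbol{y}=\curl\xib$, with $\xib\in\Xb$ solving the same problem over $\Xb$. This companion has the properties you want:
\begin{itemize}
\item $\boldsymbol{y}\in\Hzcurl\cap\Hdivz$, and $\curl\boldsymbol{y}$ is the $\Lb^2$-projection of $\xb_h$ onto $\Hcurlz^\perp$;
\item hence there are no boundary terms, and $(\curl(\INDav\vb-\vb),\boldsymbol{y})=(\INDav\vb-\vb,\curl\boldsymbol{y})\le\|\vb-\INDav\vb\|\,\|\xb_h\|$;
\item $\|\boldsymbol{y}\|_{\Hb^s}\le C\|\xb_h\|$ by Theorem~\ref{thm:embdd}.
\end{itemize}

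The obstacle is that $\Xb_h\not\subset\Xb$. For $\taub_h\in\Xb_h$ the consistency error is $(\curl(\xib-\xib_h),\curl\taub_h)=-(\xb_h,\taub_h-\PX\taub_h)$. Controlling it requires the gap estimate $\|\taub_h-\PX\taub_h\|\le Ch^s\|\curl\taub_h\|$ for $\taub_h\in\Xb_h$. This is the paper's \eqref{eq:qxl2err}, which rests on \eqref{eq:thm:curlproj-tmp-1} and Theorem~\ref{thm:embdd}. Your proposal never invokes it, and without it $\|\boldsymbol{y}-\boldsymbol{y}_h\|\le Ch^{1/2}\|\xb_h\|$ is unproven.

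With it, a Strang-type argument gives $\|\boldsymbol{y}-\boldsymbol{y}_h\|\le C\|(I-\IRTav)\boldsymbol{y}\|+Ch^s\|\xb_h\|\le Ch^s\|\xb_h\|$. Then $\|\xb_h\|\le Ch^r\|\vb\|_{\Hb^{r}}+Ch^{r-1/2+s}|\curl\vb|_{\Hb^{r-1/2}}$, which closes the proof since $s>1/2$.

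\paragraph{Comparison with the paper.} Once completed, your route is a valid discrete duality argument. The paper argues differently: it inserts $\PX\PND\vb$ and $\IX\vb=\PND\INDav\vb$, applies \eqref{eq:qxl2err} directly to discrete fields, and runs Aubin--Nitsche on the continuous space $\Xb$. Both routes rest on the same two ingredients, the embedding of Theorem~\ref{thm:embdd} and the $\Xb_h$--$\Xb$ gap estimate.
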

\begin{proof}
    \noindent ($i$) By~\eqref{def:proj}, we have $\|\curl(\vb - \PND\vb)\|^2 = (\curl (\vb - \PND\vb),\curl (\vb - \INDav\vb))$. The approximation property of $\INDav$ (see Remark~\ref{rmk:quasiint}) leads to~\eqref{eq:curlprojerrcurl}.

    \noindent ($ii$) The proof of~\eqref{eq:superconv} closely follows that of~\cite[][Eq.~3.15]{arnold_2012}. We apply the discrete Hodge decomposition~\eqref{def:hodgedecomdisc2} to $\wb_h \in \Vb_{h}$ and obtain $\wb_h = \curl \psib_h + \hb_{h} + \gradh\phi_h$ for some $\psib_h \in \Sigmabh, \hb_{h}\in \hform^2_h, \phi_h \in S_h$. In particular, it is not difficult to see that $(\gradh\phi_h,\phi_h)\in \Vb_h\times S_h$ is the mixed finite element solution of $-\Delta \phi = -\div\wb_h$ with homogeneous Dirichlet conditions. By Theorem~\ref{thm:reglap} we have $\|\phi\|_{H^{s+1}} \leq C\|\div\wb_h\|$ for some $s \in (1/2,1]$. Standard error analysis (see, e.g.,~\cite[][Corollary~12.5.18]{brenner_2007}) shows
    \begin{equation}\label{eq:thmcurlporjtmp-0}
        \|\grad\phi - \gradh\phi_h\|\leq Ch^{s}\|\phi\|_{H^{s+1}}\leq Ch^{s}\|\div\wb_h\|.
    \end{equation}
    Therefore, we insert the decomposition and apply the orthogonality~\eqref{def:proj}:
    \begin{equation}\label{eq:thmcurlporjtmp-1}
        \begin{aligned}
            (\curl (\vb - \PND\vb), \wb_h) &= (\curl (\vb - \PND\vb), \curl\psib_h + \hb_h + \gradh\phi_h) \\
            &= (\curl (\vb - \PND\vb), \hb_h + \gradh\phi_h) \\
            &= (\curl (\vb - \PND\vb), \hb_h - \Pi^{\hform^2}\hb_h + \gradh\phi_h - \grad\phi).
        \end{aligned}
    \end{equation}
    where $\Pi^{\hform^2}:\hform^2_h\rightarrow \hform^2$ denotes an $\Lb^2$-projection. The last step in~\eqref{eq:thmcurlporjtmp-1} is due to the Hodge decomposition~\eqref{def:hodgedecom2}. Appealing to~\cite[][Theorem~5.2]{arnold_2018} that shows the close gap between harmonic forms, we have
    \begin{equation}\label{eq:thmcurlporjtmp-2}
        \|\hb_h - \Pi^{\hform^2}\hb_h\| \leq \|(I - \IRTav)\Pi^{\hform^2}\hb_h\| \leq Ch^s\|\Pi^{\hform^2}\hb_h\|_{\Hb^s} \leq Ch^s\|\Pi^{\hform^2}\hb_h\| \leq Ch^s\|\hb_h\| \leq Ch^s\|\wb_h\|
    \end{equation}
    where we have also applied the approximation property of $\IRTav$ and the embedding in Theorem~\ref{thm:embdd}. 
    Combining~\eqref{eq:thmcurlporjtmp-1}\eqref{eq:thmcurlporjtmp-0}\eqref{eq:thmcurlporjtmp-2} leads to
    \begin{equation}
        (\curl (\vb - \PND\vb), \wb_h)\leq Ch^{s}\|\curl (\vb - \PND\vb)\|\|\wb_h\|_{\Hdiv}.
    \end{equation}
    Insert~\eqref{eq:curlprojerrcurl}, and we obtain~\eqref{eq:superconv}.
    
    \noindent ($iii$) 
    To bound $\|\vb - \PND\vb\|$, we first introduce the $\Lb^2$-projection $\PX:\Xb_h\rightarrow \Xb$ defined by $(\PX \ub_h - \ub_h, \xib) = 0\;\forall\,\xib\in \Xb$ for a given $\ub_h \in \Xb_h$. 
    Immediately, we have
    \begin{equation}\label{eq:qxcurlinvar}
        \curl \PX\ub_h = \curl \ub_h.
    \end{equation}
    It has been shown in~\cite[][Lemma~5.10]{arnold_2006} that
    \begin{equation}\label{eq:thm:curlproj-tmp-1}
        \|\ub_h - \PX \ub_h\| \leq \|\PX \ub_h - \INDav\PX \ub_h \|.
    \end{equation}
    Noting $\Xb \subset \Hcurl\cap\Hzdivz \hookrightarrow \Hb^s$ (see Theorem~\ref{thm:embdd}), we deduce by~\eqref{eq:thm:curlproj-tmp-1}\eqref{eq:qxcurlinvar} and the approximation of $\INDav$ (see Remark~\ref{rmk:quasiint}) that
    \begin{align}
        \|\ub_h - \PX \ub_h\| \leq Ch^{s}\|\PX \ub_h\|_{\Hb^s} \leq Ch^s \|\curl\PX\ub_h\| = Ch^s \|\curl \ub_h\|,\label{eq:qxl2err}
    \end{align}
    where $s\in (1/2,1]$ depends on the geometry of $\Omega$.
    
    Next, we define $\IX :\Hcurl \rightarrow \Xb_h,\,\IX:= \PND\circ\INDav$. By the decomposition~\eqref{def:hodgedecomdisc1}, it is not difficult to see that $\IX$ can be characterized as follows: 
    \begin{equation}\label{eq:charix}
        \IX \ub = \INDav\ub - \grad p_{h,\ub} - \hb_{h,\ub}\quad\forall\,\ub \in \Hcurl
    \end{equation}
    where $p_{h,\ub} \in P_h$ and $\hb_{h,\ub} \in \hform^1_h$ satisfy    
    \begin{equation}\label{def:ph}
         (\grad p_{h,\ub} + \hb_{h,\ub}, \grad q_h + \jb_h) = (\INDav\ub,\grad q_h+ \jb_h)\quad\forall\, q_h\in P_h, \jb_h \in \hform^1_h.
    \end{equation}
    Since $\vb \in \Xb$, it holds that
    \begin{equation}
        (\grad p_{h,\vb} + \hb_{h,\vb}, \grad q_h + \jb_{h}) = (\INDav\vb - \vb,\grad q_h + \jb_{h})\quad\forall\, q_h\in P_h, \jb_h \in \hform^1_h.
    \end{equation}
    which implies that
    \begin{equation}
        \|\grad p_{h,\vb} + \hb_{h,\vb}\| \leq C \|\vb - \INDav\vb\| \leq Ch^{r_1}|\vb|_{\Hb^{r_1}}
    \end{equation}
    where $0 \leq r_1 \leq k$.
    Consequently, we have the following interpolation error estimates for $\vb \in \Xb$:
    \begin{equation}\label{eq:intastl2errest}
        \|\vb - \IX\vb\| \leq \|\vb - \INDav\vb\| + \|\grad p_{h,\vb} + \hb_{h,\vb}\| \leq C h^{r_1} |\vb|_{\Hb^{r_1}},
    \end{equation}
    \begin{equation}\label{eq:intastcurlerrest}
        \|\curl(\vb - \IX\vb)\| = \|\curl(\vb - \INDav\vb)\| \leq C h^{r_1} |\curl\vb|_{\Hb^{r_1}}.
    \end{equation}
    Combining the properties of $\PX$ and $\IX$, we claim that
    \begin{equation}\label{eq:l2errestqxiast}
        \|\IX \vb - \PX \IX \vb\| \leq Ch^{r_1} |\vb|_{\Hb^{r_1}}.
    \end{equation}
    Indeed, it holds that
    \begin{equation}
        \begin{aligned}
             \|\IX \vb - \PX \IX \vb\|^2 &= (\IX \vb - \PX \IX\vb, \IX \vb - \PX \IX \vb)\\
            &= (\IX \vb - \PX \IX\vb, \IX \vb - \vb)\\
            &\leq \|\IX \vb - \PX \IX\vb\|\|\IX \vb - \vb\|.
        \end{aligned}
    \end{equation}
    Insert~\eqref{eq:intastl2errest}, and we obtain~\eqref{eq:l2errestqxiast}.

    To estimate $\|\vb - \PND\vb\|$, we use the fact that $\PND \IX = \IX$ to deduce that
    \begin{equation}\label{eq:curlprojdual-4}
        \begin{aligned}
            \|\vb - \PND\vb\| &\leq \|\vb - \PX \PND\vb\| + \|\PX \PND\vb - \PND\vb\| \\
            &\leq \|\vb - \PX \PND\vb\| + \|\PX \PND(\vb-\IX\vb) - \PND(\vb - \IX \vb)\| + \|\PND \IX \vb - \PX\PND \IX\vb\| \\
            &= \underbrace{\|\vb - \PX \PND\vb\|}_{I_1} + \underbrace{\|\PX \PND(\vb-\IX\vb) - \PND(\vb - \IX \vb)\|}_{I_2} + \underbrace{\|\IX \vb - \PX \IX\vb\|}_{I_3}.
        \end{aligned}
    \end{equation}
    Using~\eqref{eq:qxl2err}\eqref{eq:curlprojerrcurl}\eqref{eq:intastcurlerrest}, the term $I_2$ is bounded as
    \begin{equation}
        I_2 \leq Ch^s\|\curl\PND(\vb-\IX\vb)\|\leq Ch^s\|\curl(\vb-\IX\vb)\|\leq Ch^{s+r_2-1/2}|\curl \vb|_{\Hb^{r_2-1/2}} \leq Ch^{r_2}|\vb|_{\Hb^{r_2 + 1/2}}
    \end{equation}
    where $1/2 \leq r_2 \leq 1/2+k$.
    The term $I_3$ can be bound as in~\eqref{eq:l2errestqxiast}. To bound $I_1$, we introduce the dual problem seeking $\wb \in \Xb$ such that
    \begin{equation}\label{eq:dualcurlproj}
        (\curl \wb,\curl\psib) = (\vb - \PX \PND\vb, \psib)\quad\forall\,\psib\in \Xb.
    \end{equation}
    The dual solution $\wb \in \Xb$ is well-defined since the bilinear form is coercive in $\Xb$.
    Since~\eqref{eq:dualcurlproj} implies that $\curl\curl\wb = \vb - \PX \PND\vb \in L^2$ and $\curl\wb\times\nb = 0$ on $\partial\Omega$, we have $\curl\wb \in \Hzcurl\cap\Hdiv \hookrightarrow \Hb^{s}$ for some $s > 1/2$ by Theorem~\ref{thm:embdd} and
    \begin{equation}\label{eq:curlprojdualreg}
        \|\curl\wb\|_{\Hb^{s}} \leq C\|\vb - \PX \PND\vb\|.
    \end{equation}
    Let $\psib = \vb - \PX \PND\vb$ in~\eqref{eq:dualcurlproj}. We obtain by~\eqref{eq:qxcurlinvar}\eqref{def:proj}\eqref{eq:canonicalapp}\eqref{eq:curlprojdualreg} that
    \begin{equation}\label{eq:curlprojdual-1}
        \begin{aligned}
            \|\vb - \PX \PND\vb\|^2 &= (\curl \wb,\curl (\vb - \PX \PND\vb)) = (\curl \wb,\curl (\vb - \PND\vb))\\
            &=(\curl (\wb - \IND\wb), \curl (\vb - \PND\vb)) \\
            &\leq Ch^s|\curl\wb|_{\Hb^s}\|\curl(\vb - \PND\vb)\| \\
            &\leq Ch^s\|\vb - \PX \PND\vb\|\|\curl(\vb - \PND\vb)\|.
        \end{aligned}
    \end{equation} 
    Inserting~\eqref{eq:curlprojerrcurl}, we obtain $I_1 \leq Ch^{s+r_3-1/2}|\curl \vb|_{\Hb^{r_3-1/2}} \leq Ch^{r_3}|\vb|_{\Hb^{r_3+1/2}}$ where $1/2 \leq r_3 \leq 1/2 + k$. Combining the bounds of $I_1, I_2, I_3$ leads to~\eqref{eq:curlprojerrl2}.
\end{proof}

\subsection{Norm equivalence on \texorpdfstring{$\Sigmab_h$}{Sigmah}}\label{sec:stabdisc}
In line with the analysis of the continuous case, we endow $\Sigmabh$ with the nonstandard norm \[\|\taub_h\|_{\Sigmabh}^2 := \|\taub_h\|^2 + \|\curl\taub_h\|^2_{\Vb_{h,0}'}.\] Note that $\|\taub\|_{\Sigmabh}\leq \|\taub\|_{\Sigmab}$ for $\taub\in \Sigmabh+\Sigmab$. In this section, we show an equivalent norm on $\Sigmab_h$ as the discrete counterpart of Lemma~\ref{thm:sigmanormequiv}. 

Given $\taub_h \in \Sigmabh$, we define $\Pzh:\Sigmabh \rightarrow \Sigmab_{h,0}\cap(\Sigmab_{h,0}\cap \Hzcurlz)^\perp$ such that $\taub_{h0} := \Pzh\taub_h$ solves
\begin{equation}\label{def:tau0h}
    (\curl\taub_{h0},\curl\psib_h) = (\curl\taub_h,\curl\psib_h)\quad\forall\,\psib_h\in\Sigmab_{h,0}.
\end{equation}
Since vanishing $\curl\taub_h$ implies vanishing $\taub_{h0}$, the equation~\eqref{def:tau0h} admits a uniquely defined solution $\taub_{h0}$. 
\begin{lemma}[equivalent norm on $\Sigmab_h$]\label{thm:sigmanormequivdisc} 
    For any $\taub_h \in \Sigmabh$, it holds that
    \begin{equation}\label{eq:sigmahnormeq}
         \|\taub_h\|_{\Sigmabh} \simeq \|\taub_h\| + \|\Pzh\taub_h\|_{\Hcurl}
    \end{equation}
    where $\Pzh:\Sigmabh \rightarrow \Sigmab_{h,0}\cap(\Sigmab_{h,0}\cap \Hzcurlz)^\perp$ is defined in~\eqref{def:tau0h}.
\end{lemma}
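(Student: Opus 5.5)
We mirror the proof of Lemma~\ref{thm:sigmanormequiv}, replacing the continuous regular decomposition of $\Hzdiv$ by a discrete analogue obtained through the commuting $\Lb^2$-stable quasi-interpolations of Remark~\ref{rmk:quasiint}.

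\emph{Lower bound $\|\taub_h\| + \|\Pzh\taub_h\|_{\Hcurl}\le C\|\taub_h\|_{\Sigmabh}$.} Since $\curl\Sigmab_{h,0}\subset\Vb_{h,0}$, we test~\eqref{def:tau0h} with $\psib_h=\Pzh\taub_h$. This gives
\[\|\curl\Pzh\taub_h\|^2=(\curl\taub_h,\curl\Pzh\taub_h)\le\|\curl\taub_h\|_{\Vb_{h,0}'}\|\curl\Pzh\taub_h\|_{\Hdiv},\]
and $\|\curl\Pzh\taub_h\|_{\Hdiv}=\|\curl\Pzh\taub_h\|$ because the divergence vanishes. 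Hence $\|\curl\Pzh\taub_h\|\le\|\curl\taub_h\|_{\Vb_{h,0}'}$. Since $\Pzh\taub_h\perp\Sigmab_{h,0}\cap\Hzcurlz$, the discrete Poincar\'e inequality for the complex with boundary conditions controls $\|\Pzh\taub_h\|$ by $\|\curl\Pzh\taub_h\|$. This bounds the full $\Hcurl$ norm.

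\emph{Upper bound.} Fix $\vb_h\in\Vb_{h,0}$ and apply Theorem~\ref{thm:regdecomphdiv}: $\vb_h=\hat\vb+\curl\hat\taub$ with $\hat\vb,\hat\taub\in\Hb^1_0$. Now apply quasi-interpolations with boundary conditions that commute and are projections (e.g.\ the Falk--Winther or Ern--Guermond operators, $\Lb^2$-stable and $\Hb^1$-stable). Writing $\IRTav$ and $\INDav$ for their versions with zero traces, we get
\[\vb_h=\IRTav\vb_h=\IRTav\hat\vb+\curl\INDav\hat\taub.\]
The key estimates are $\|\INDav\hat\taub\|_{\Hcurl}\le C\|\vb_h\|_{\Hdiv}$ and
\[\|\IRTav\hat\vb\|+h^{-1}\|\IRTav\hat\vb-\hat\vb\|\le C\|\hat\vb\|_{\Hb^1}.\]
Then
\[(\curl\taub_h,\vb_h)=(\curl\taub_h,\IRTav\hat\vb)+(\curl\Pzh\taub_h,\curl\INDav\hat\taub),\]
where the second identity uses $\INDav\hat\taub\in\Sigmab_{h,0}$ and~\eqref{def:tau0h}. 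The second term is bounded by $C\|\curl\Pzh\taub_h\|\,\|\vb_h\|_{\Hdiv}$.

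For the first term we split $(\curl\taub_h,\hat\vb)+(\curl\taub_h,\IRTav\hat\vb-\hat\vb)$. Integrating by parts, which is legitimate since $\hat\vb\in\Hb^1_0$, gives $(\curl\taub_h,\hat\vb)=(\taub_h,\curl\hat\vb)\le C\|\taub_h\|\|\vb_h\|_{\Hdiv}$. The remainder is handled by an inverse estimate on the quasi-uniform mesh: $\|\curl\taub_h\|\le Ch^{-1}\|\taub_h\|$, so the remainder is at most $Ch^{-1}\|\taub_h\|\cdot h\|\hat\vb\|_{\Hb^1}\le C\|\taub_h\|\|\vb_h\|_{\Hdiv}$. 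Taking the supremum over $\vb_h$ yields $\|\curl\taub_h\|_{\Vb_{h,0}'}\le C(\|\taub_h\|+\|\curl\Pzh\taub_h\|)$.

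The main obstacle is the upper bound: we need a discrete regular decomposition with the right stability. In particular, the $\Hb^1$-approximation estimate $\|(I-\IRTav)\hat\vb\|\le Ch\|\hat\vb\|_{\Hb^1}$ must hold for operators that preserve zero normal traces and commute with $\curl$. If only $\Lb^2$-stability were available, we would instead use the local regular decomposition of Lemma~\ref{thm:lclregdecomp} elementwise to avoid the inverse estimate. The quasi-uniformity assumption makes the inverse-estimate route the simplest.
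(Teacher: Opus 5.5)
Your proof is correct and follows essentially the same route as the paper. Both arguments take the regular decomposition $\vb_h=\hat\vb+\curl\hat\taub$ and use the boundary-condition-preserving commuting quasi-interpolations to write $\vb_h=\IRTav\hat\vb+\curl\INDav\hat\taub$. They then integrate by parts against $\hat\vb\in\Hb^1_0$, bound the remainder $(\curl\taub_h,\IRTav\hat\vb-\hat\vb)$ with an inverse estimate, and use \eqref{def:tau0h} on the $\curl\INDav\hat\taub$ term. Your lower bound is slightly more complete than the paper's one-line remark, since you also use the discrete Poincar\'e inequality on $\Sigmab_{h,0}\cap(\Sigmab_{h,0}\cap\Hzcurlz)^\perp$ to control $\|\Pzh\taub_h\|$ itself, not only its curl.
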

\begin{proof}
    Applying the regular decomposition of $\Hzdiv$ in Theorem~\ref{thm:regdecomphdiv} and the properties of $\IRTav, \INDav$ (see Remark~\ref{rmk:quasiint}), we deduce that
    \begin{equation}
        \begin{aligned}
            \|\curl\taub_h\|_{\Vb_{h,0}'} 
            &= \sup_{\zerob \neq \vb_h \in \Vb_{h,0}} \frac{( \curl \taub_h, \vb_h)}{\|\vb_h\|_{\Hzdiv} } = \sup_{\zerob \neq \vb_h \in \Vb_{h,0}} \frac{(\curl \taub_h, \IRTav(\hat{\vb} + \curl \hat{\taub}))}{\|\vb_h\|_{\Hzdiv} } 
            \\
            &= \sup_{\zerob \neq \vb_h \in \Vb_{h,0}} \frac{( \curl \taub_h, \IRTav \hat{\vb} + \curl\INDav\hat{\taub})}{\|\vb_h\|_{\Hzdiv} } \\
            &= \sup_{\zerob \neq \vb_h \in \Vb_{h,0}} \frac{( \curl \taub_h, \hat{\vb}) + ( \curl \taub_h, \IRTav\hat{\vb} - \hat{\vb}) + (\curl\taub_h, \curl\INDav\hat{\taub})}{\|\vb_h\|_{\Hzdiv} } \\
            &= \sup_{\zerob \neq \vb_h \in \Vb_{h,0}} \frac{(\taub_h, \curl\hat{\vb}) + (\curl \taub_h, \IRTav\hat{\vb}- \hat{\vb}) + (\curl\Pzh\taub_h, \curl\INDav\hat{\taub})}{\|\vb_h\|_{\Hzdiv} } \\
            &\leq \sup_{\zerob \neq \vb_h \in \Vb_{h,0}} \frac{\|\taub_h\| \|\hat{\vb}\|_{\Hb^1}+Ch\|\curl\taub_h\||\hat{\vb}|_{\Hb^1} + \|\curl\Pzh\taub_h\|\|\curl \hat{\taub}\|}{\|\vb_h\|_{\Hzdiv} }\\
            &\leq C(\|\taub_h\| + \|\curl\Pzh \taub_h\|)
        \end{aligned}
    \end{equation}
    where we applied the inverse inequality and~\eqref{eq:regdecomphdiv} in the last step. It is clear from~\eqref{def:tau0h} that $\|\curl\Pzh \taub_h\|\leq \|\curl \taub_h\|_{\Vb_{h,0}'}$, and we can conclude.
\end{proof}
The following lemma implies the well-posedness and stability of the discrete problem~\eqref{eq:hodgelapdisc}. 
\begin{lemma}[boundedness and inf-sup condition]\label{thm:bddinfsupdisc}
     There exist positive constants $C, \beta_\a'$ independent of $h$ such that
    \begin{align}
        \a((\mub_h,\ub_h),(\taub_h,\vb_h)) \leq C(\|(\mub_h,\ub_h)\|_{\Sigmabh\times\Hdiv}\|(\taub_h,\vb_h)\|_{\Sigmabh\times\Hdiv} &  && \forall\,(\mub_h,\ub_h), (\taub_h,\vb_h)\in \Sigmabh\times \Vb_{h,0},\label{eq:bddadisc}\\
        \sup_{0\neq (\taub_h, \vb_h)\in\Sigmabh\times \Vb_{h,0}}\frac{\a((\mub_h,\ub_h),(\taub_h,\vb_h))}{\|(\taub_h,\vb_h)\|_{\Sigmabh\times\Hdiv} } \geq \beta_\a'\|(\mub_h,\ub_h)\|_{\Sigmabh\times\Hdiv} &  && \forall\,(\mub_h,\ub_h) \in \Sigmabh\times\Vb_{h,0}.\label{eq:infsupadisc}
    \end{align}
\end{lemma}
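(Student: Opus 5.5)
The plan is to mirror the proof of Lemma~\ref{thm:bddinfsup} at the discrete level. The norm equivalence of Lemma~\ref{thm:sigmanormequivdisc} replaces Lemma~\ref{thm:sigmanormequiv}, and the discrete Helmholtz decomposition of Corollary~\ref{thm:helmdecompdisc}(ii) replaces the continuous one.

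\textbf{Boundedness.} For \eqref{eq:bddadisc} I would bound the two mixed terms by duality. Since $\ub_h,\vb_h\in\Vb_{h,0}$, we have $(\curl\taub_h,\ub_h)\le \|\curl\taub_h\|_{\Vb_{h,0}'}\|\ub_h\|_{\Hdiv}$, and similarly for $(\curl\mub_h,\vb_h)$. The remaining terms are bounded by Cauchy--Schwarz.

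\textbf{Inf-sup.} Fix $(\mub_h,\ub_h)$ and decompose $\ub_h=\curl\psib_h+\gradhz\phi_h$ by \eqref{eq:helmdecompdisc2}. I would take $\psib_h\in\Xb_h$ (subtract its part in $\Sigmabh\cap\Hcurlz$), so that the discrete Poincar\'e inequality \cite[Theorem~4.6]{arnold_2018} gives $\|\psib_h\|\le C_p\|\curl\psib_h\|$ with $C_p$ independent of $h$; this uniformity comes from the bounded cochain projections of Remark~\ref{rmk:quasiint}. Corollary~\ref{thm:helmdecompdisc}(ii) also gives $\|\gradhz\phi_h\|\le c_p\|\div\ub_h\|$. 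Three test pairs then produce the needed terms:
\begin{itemize}
\item $(\mub_h,\ub_h)$ gives $\|\mub_h\|^2+\|\div\ub_h\|^2$.
\item $(\zerob,\curl\Pzh\mub_h)$ is admissible because $\curl\Pzh\mub_h\in\curl\Sigmab_{h,0}\subset\Vb_{h,0}$. By \eqref{def:tau0h} it gives $(\curl\mub_h,\curl\Pzh\mub_h)=\|\curl\Pzh\mub_h\|^2$.
\item $(-\psib_h,\zerob)$ gives $-(\mub_h,\psib_h)+(\curl\psib_h,\ub_h)=-(\mub_h,\psib_h)+\|\curl\psib_h\|^2$. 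Here we use the $\Lb^2$-orthogonality $(\curl\psib_h,\gradhz\phi_h)=0$ from \eqref{eq:helmdecompdisc2}.
\end{itemize}
Young's inequality and the Poincar\'e bound then give a lower bound $-\tfrac{C_p}{2}\|\mub_h\|^2+\tfrac12\|\curl\psib_h\|^2$ for the third term.

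Now set $\taub_h=\mub_h-\delta\psib_h$ and $\vb_h=\ub_h+\curl\Pzh\mub_h$. With $\delta$ small, exactly as in the continuous proof, we get
\[
\a((\mub_h,\ub_h),(\taub_h,\vb_h))\ge C\bigl(\|\mub_h\|^2+\|\curl\Pzh\mub_h\|^2+\|\ub_h\|^2+\|\div\ub_h\|^2\bigr).
\]
Here $\|\ub_h\|^2=\|\curl\psib_h\|^2+\|\gradhz\phi_h\|^2$ is controlled using $\|\gradhz\phi_h\|\le c_p\|\div\ub_h\|$. By Lemma~\ref{thm:sigmanormequivdisc} (and Poincar\'e for $\Pzh\mub_h$) the right-hand side dominates $\|\mub_h\|_{\Sigmabh}^2+\|\ub_h\|_{\Hdiv}^2$.

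It remains to bound the test pair. We have $\|\psib_h\|_{\Sigmabh}\le\|\psib_h\|+\|\curl\psib_h\|\le C\|\ub_h\|$, and $\|\curl\Pzh\mub_h\|_{\Hdiv}=\|\curl\Pzh\mub_h\|\le\|\curl\mub_h\|_{\Vb_{h,0}'}$. Hence $\|\taub_h\|_{\Sigmabh}+\|\vb_h\|_{\Hdiv}\le C(\|\mub_h\|_{\Sigmabh}+\|\ub_h\|_{\Hdiv})$, which yields \eqref{eq:infsupadisc}.

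\textbf{Main obstacle.} The delicate points are the $h$-uniform constants: the discrete Poincar\'e constant, the stability constant in \eqref{eq:helmdecompdisc2}, and the constant in Lemma~\ref{thm:sigmanormequivdisc}. Each must be independent of $h$, which I would justify via the bounded commuting quasi-interpolations. I would also double-check the admissibility of $\curl\Pzh\mub_h$ as a test function in $\Vb_{h,0}$.
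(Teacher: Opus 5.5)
Your proposal is correct and is essentially the paper's proof. The paper only states that the argument is completely parallel to Lemma~\ref{thm:bddinfsup}, using the discrete Helmholtz decomposition~\eqref{eq:helmdecompdisc2}, the norm equivalence~\eqref{eq:sigmahnormeq} and the $h$-uniform discrete Poincar\'e inequality; your write-up fills in those details, including the admissibility $\curl\Pzh\mub_h\in\curl\Sigmab_{h,0}\subset\Vb_{h,0}$ and the bound on the test pair, with one cosmetic slip inherited from the continuous proof: with $\|\psib_h\|\le C_p\|\curl\psib_h\|$, Young's inequality yields $-\tfrac{C_p^2}{2}\|\mub_h\|^2$ rather than $-\tfrac{C_p}{2}\|\mub_h\|^2$, which only affects the choice of $\delta$.
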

\begin{proof}
    The proof is completely parallel to that of Lemma~\ref{thm:bddinfsup} by leveraging the discrete Helmholtz decomposition~\eqref{eq:helmdecompdisc2}, the $\Sigmabh$-norm equivalence~\eqref{eq:sigmahnormeq} and the discrete \Poincare inequality~\cite[][Theorem~4.6]{arnold_2018}.
\end{proof}
\begin{remark}[$\Sigmab$- and $\Sigmabh$-norms]
 Note that Lemmas~\ref{thm:bddinfsup} and~\ref{thm:bddinfsupdisc} state the boundedness and inf-sup conditions in terms of different norms. In particular, $\|\cdot\|_{\Sigmabh}$ is weaker than $\|\cdot\|_{\Sigmab}$. This fact can be understood as a consequence of the violation of the de Rham complex structure, which causes $h^{1/2}$-suboptimal convergence.
\end{remark}

\subsection{A priori error analysis}\label{sec:errest}
We establish \emph{a priori} error estimates with respect to energy norms in the following theorem.
\begin{theorem}[error estimates in the energy norm]\label{thm:eneerrest}
    Let $\ub\in \Hb^1_0$ solve~\eqref{eq:veclap} and $(\mub_h, \ub_h)\in \Sigmabh\times \Vb_{h,0}$ solve~\eqref{eq:hodgelapdisc}. If $\ub\in \Wb^{k,\infty}\cap \Hb^{k+1}$, there exists a positive constant $C$ such that 
    \begin{equation}\label{eq:eneerrest}
        \|\ub - \ub_h\|_{\Hdiv} + \|\mub - \mub_h\|_{\Sigmabh} + h\|\curl(\mub - \mub_h)\| \leq C h^{k - 1/2}\left(|\ub|_{\Wb^{k,\infty}} + \|\ub\|_{\Hb^{k+1}}\right)
    \end{equation}
    where $\mub := \curl\ub$.
\end{theorem}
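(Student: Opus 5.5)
Following the 2D analysis of Arnold--Falk--Gopalakrishnan, I would combine the discrete inf-sup condition of Lemma~\ref{thm:bddinfsupdisc} with a judiciously chosen interpolant $(\tilde{\mub}_h,\tilde{\ub}_h)\in\Sigmabh\times\Vb_{h,0}$, and control the resulting consistency terms. For the velocity I take $\tilde{\ub}_h := \IRT\ub\in\Vb_{h,0}$, which is well defined and preserves the boundary condition since $\ub\in\Hb^1_0\cap\Hb^{k+1}$. For the vorticity the natural candidate is $\tilde{\mub}_h := \PND\mub$ together with a correction in $\grad P_h\oplus\hform^1_h$. Because $\Sigmabh\not\subset\Sigmab_{h,0}$, $\mub$ has no tangential trace condition; the cleanest choice I see is $\tilde{\mub}_h := \PND\mub + \gradh$-part chosen so that the first equation is matched, i.e. $\tilde{\mub}_h$ is the $\Lb^2$ projection-like object. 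I will first try the simplest option $\tilde{\mub}_h := \IND\mub$, which is meaningful since $\mub=\curl\ub\in\Wb^{k-1,\infty}\cap\Hb^k$.

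By Galerkin consistency, $\a((\mub-\mub_h,\ub-\ub_h),(\taub_h,\vb_h))=0$ for all discrete test pairs. Here I use that the continuous equations hold with $\taub_h\in\Sigmabh\subset\Sigmab$, and integrate by parts using $\ub=\zerob$ on $\partial\Omega$. The discrete inf-sup then gives
\[
\|(\tilde{\mub}_h-\mub_h,\tilde{\ub}_h-\ub_h)\|_{\Sigmabh\times\Hdiv}\le C\sup\frac{\a((\tilde{\mub}_h-\mub,\tilde{\ub}_h-\ub),(\taub_h,\vb_h))}{\|(\taub_h,\vb_h)\|}.
\]
The numerator splits into four terms: $(\tilde{\mub}_h-\mub,\taub_h)$, $(\div(\tilde{\ub}_h-\ub),\div\vb_h)$, $(\curl(\tilde{\mub}_h-\mub),\vb_h)$, and $-(\curl\taub_h,\tilde{\ub}_h-\ub)$. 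The first three are routine. The first two are $O(h^k)$ by \eqref{eq:canonicalapp} and the commuting diagram. For the third, replacing $\IND\mub$ by $\PND\mub$ on the $\curl$-part, Lemma~\ref{thm:curlproj}(ii) gives $Ch^{k-1+1/2}|\mub|_{\Hb^{k-1}}\|\vb_h\|_{\Hdiv}$. That is exactly order $k-1/2$, which is also the source of the suboptimality.

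The main obstacle is the last term, $(\curl\taub_h,\ub-\IRT\ub)$, which must be bounded by $h^{k-1/2}\|\taub_h\|_{\Sigmabh}$. The difficulty is that $\|\taub_h\|_{\Sigmabh}$ controls only $\|\taub_h\|$ and $\|\curl\Pzh\taub_h\|$, not $\|\curl\taub_h\|$. I would write $\taub_h=\Pzh\taub_h+(\taub_h-\Pzh\taub_h)$. The $\Pzh$-piece is bounded by $Ch^k\|\ub\|_{\Hb^k}\|\curl\Pzh\taub_h\|$. The remainder $\etab_h:=\taub_h-\Pzh\taub_h$ is discretely curl-harmonic with respect to $\Sigmab_{h,0}$, i.e. $(\curl\etab_h,\curl\psib_h)=0$ for all $\psib_h\in\Sigmab_{h,0}$, and $\|\etab_h\|\le C\|\taub_h\|_{\Sigmabh}$. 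Its curl is then concentrated near $\partial\Omega$. Let $\Omega_\delta$ be a boundary strip of width $\delta\sim h$. There, $\|\ub-\IRT\ub\|_{\Lb^1(\Omega_h)}\le Ch^{k}|\ub|_{\Wb^{k,\infty}}\cdot|\Omega_h|$ type bounds give $\|\ub-\IRT\ub\|_{\Lb^2(\Omega_h)}\le Ch^{k+1/2}|\ub|_{\Wb^{k,\infty}}$, and an inverse estimate gives $\|\curl\etab_h\|_{\Lb^2(\Omega_h)}\le Ch^{-1}\|\etab_h\|$, yielding $h^{k-1/2}$. In the interior, I would prove the announced \emph{discrete Caccioppoli inequality}: for a cutoff $\omega$ vanishing near $\partial\Omega$ with $|\grad\omega|\lesssim d^{-1}$, one has $\|\curl\etab_h\|_{\Lb^2(\Omega\setminus\Omega_{d})}\le Cd^{-1}\|\etab_h\|$. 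To get it, test with $\IND(\omega^2\etab_h)$ and use the local regular decomposition Lemma~\ref{thm:lclregdecomp} to handle the gradient part and the superapproximation error. A dyadic decomposition of the interior into layers at distance $d_j=2^jh$, combined with the $\Lb^\infty$ interpolation bound $h^k$ weighted by layer volume $d_j^{1/2}$, sums geometrically to $Ch^{k-1/2}$ without a $\log h$ factor. Making this layered summation converge is the step I expect to require the most care.

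Finally, the triangle inequality with the interpolation errors of $\IRT\ub$ and $\IND\mub$ in the respective norms gives \eqref{eq:eneerrest} for the first two terms. The term $h\|\curl(\mub-\mub_h)\|$ follows from an inverse estimate, $h\|\curl(\tilde{\mub}_h-\mub_h)\|\le C\|\tilde{\mub}_h-\mub_h\|$, plus $h\|\curl(\mub-\IND\mub)\|\le Ch^{k}$.
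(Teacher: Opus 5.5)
Your treatment of the hard term $(\curl\taub_h,\ub-\IRT\ub)$ matches the paper's. You split $\taub_h=\Pzh\taub_h+(\taub_h-\Pzh\taub_h)$, use an inverse estimate on the $O(h)$ boundary layer, prove a discrete Caccioppoli inequality by testing with $\IND(\omega^2(\taub_h-\Pzh\taub_h))$ together with the local regular decomposition, and sum geometrically over dyadic layers. Your $\Lb^2$-per-layer bookkeeping is the paper's H\"older argument with $q=1$. Even a global right-hand side $\|\taub_h-\Pzh\taub_h\|$ in the Caccioppoli bound would suffice, since $\sum_j d_j^{-1/2}\le Ch^{-1/2}$. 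The gap is the vorticity comparison function, which you leave inconsistent.

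With your stated choice $\tilde{\mub}_h=\IND\mub$, the term $(\curl(\IND\mub-\mub),\vb_h)=(\IRT\curl\mub-\curl\mub,\vb_h)$ is in general only $O(h^{k-1})$. The reason is that $\curl\mub=\curl\curl\ub$ lies only in $\Hb^{k-1}$, and the Raviart--Thomas interpolation error has no orthogonality against $\Vb_{h,0}$. You cannot ``replace $\IND\mub$ by $\PND\mub$ on the curl-part'' for free: $\curl(\IND\mub-\PND\mub)$ is the $\Lb^2$-projection of $\IRT\curl\mub-\curl\mub$ onto $\curl\Sigmabh$, and it again admits only an $O(h^{k-1})$ bound. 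The same defect recurs in your final triangle inequality, which needs $\|\curl(\mub-\tilde{\mub}_h)\|_{\Vb_{h,0}'}=O(h^{k-1/2})$.

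The half-order gain in \eqref{eq:superconv} comes from the Galerkin orthogonality \eqref{def:proj}, so you must take $\tilde{\mub}_h:=\PND\mub$ throughout, as the paper does. But then your claim that $(\tilde{\mub}_h-\mub,\taub_h)$ is $O(h^k)$ ``by \eqref{eq:canonicalapp}'' no longer holds. Bounding $\|\mub-\PND\mub\|$ requires the separate duality argument of Lemma~\ref{thm:curlproj}(iii). That lemma applies only because $\mub\in\Xb$, which is true since $\ub\in\Hb^1_0$ gives $(\curl\ub,\zb)=(\ub,\curl\zb)=0$ for all $\zb\in\Hcurlz$. It yields $Ch^{k-1/2}\|\mub\|_{\Hb^k}$, which is sufficient but is an ingredient absent from your plan. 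For $k=1$, your $O(h^k)$ bound on $\|\mub-\IND\mub\|$ is not covered by \eqref{eq:canonicalapp} anyway, since for $p=2$ it needs $r>1$.

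With $\PND\mub$ and Lemma~\ref{thm:curlproj}(i)--(iii) replacing your interpolation claims, the rest of your argument closes exactly as in the paper. Note also that (ii) gives $h^{k-1/2}|\curl\mub|_{\Hb^{k-1}}$, not $|\mub|_{\Hb^{k-1}}$.
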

\begin{proof}
    Denote by $\ub_I := \IRT\ub \in \Vb_{h,0}$ the canonical Raviart-Thomas interpolant and $\mub_\Pi := \PND \mub \in \Xb_h \subset \Sigmabh$ the curl-elliptic projection defined in~\eqref{def:proj}. Thanks to the approximation properties~\eqref{eq:canonicalapp}\eqref{eq:curlprojerrcurl}\eqref{eq:superconv}\eqref{eq:curlprojerrl2} and the inverse inequality, it suffices to estimate $\|\ub_I - \ub_h\|_{\Hdiv} + \|\mub_\Pi - \mub_h\|_{\Sigmabh}$.
    
    By the inf-sup condition~\eqref{eq:infsupadisc} and the Galerkin orthogonality from~\eqref{eq:hodgelaprd}\eqref{eq:hodgelapdiscrd}, we have
    \begin{equation}
        \begin{aligned}
            \|\ub_I - \ub_h\|_{\Hdiv} + \|\mub_\Pi - \mub_h\|_{\Sigmabh} &\leq \beta_\a^{'-1}\sup_{0\neq (\taub_h, \vb_h)\in\Sigmabh\times \Vb_{h,0}}\frac{\a((\mub_\Pi - \mub_h,\ub_I - \ub_h),(\taub_h,\vb_h))}{\|\taub_h\|_{\Sigmabh} + \|\vb_h\|_{\Hdiv}} \\
            &=\beta_\a^{'-1}\sup_{0\neq (\taub_h, \vb_h)\in\Sigmabh\times \Vb_{h,0}}\frac{\a((\mub_\Pi - \mub,\ub_I - \ub),(\taub_h,\vb_h))}{\|\taub_h\|_{\Sigmabh} + \|\vb_h\|_{\Hdiv}}.
        \end{aligned}
    \end{equation}
    The numerator is, by definition,
    \begin{equation}\label{eq:thmerr-1}
    \begin{aligned}
        \a((\mub_\Pi - \mub,\ub_I - \ub),&(\taub_h,\vb_h)) =  -\underbrace{(\curl\taub_h, \ub_I - \ub)}_{I_1}  \\
        &\quad\quad + \underbrace{(\mub_\Pi - \mub,\taub_h) + (\curl(\mub_\Pi - \mub), \vb_h) + (\div(\ub_I - \ub),\div\vb_h)}_{I_2}.
    \end{aligned}
    \end{equation}
    Note that $\mub := \curl\ub \in \Xb$ and thus~\eqref{eq:curlprojerrl2} is applicable. By~\eqref{eq:canonicalapp}\eqref{eq:curlprojerrcurl}\eqref{eq:superconv}\eqref{eq:curlprojerrl2}, $I_2$ is bounded by
    \begin{equation}\label{eq:thmerr-2}
        \begin{aligned}
             I_2 &\leq Ch^{k-1/2}\|\mub\|_{\Hb^{k}}\|\taub_h\| + Ch^{k-1/2}|\mub|_{\Hb^{k}}\|\div\vb_h\| + Ch^k|\ub|_{H^{k+1}}\|\div\vb_h\|  \\
             &\leq Ch^{k-1/2}\|\ub\|_{\Hb^{k+1}}(\|\taub_h\| + \|\vb_h\|_\Hdiv).
        \end{aligned}
    \end{equation}
    
    The main effort is dedicated to estimating $I_1$ in~\eqref{eq:thmerr-1} since $\|\curl\taub_h\|$ is not controlled by $\|\taub_h\|_{\Sigmabh}$. Naive application of the inverse inequality will lead to a severely pessimistic bound.
    Recall the projection $\Pzh$ defined in~\eqref{def:tau0h}. We have
    \begin{equation}\label{eq:thmerr-5}
        I_1 = (\curl(\taub_h -\Pzh\taub_h), \ub_I - \ub) + (\curl \Pzh\taub_h, \ub_I - \ub).
    \end{equation}
    The second term on the right-hand side of~\eqref{eq:thmerr-5} can be easily bounded by
    \begin{equation}
        (\curl \Pzh\taub_h, \ub_I - \ub) \leq Ch^k|\ub|_{\Hb^k}\|\curl\Pzh\taub_h\| \leq Ch^k|\ub|_{\Hb^k}\|\taub_{h}\|_{\Sigmabh}
    \end{equation}
    thanks to the $\Sigmabh$-norm equivalence (see Lemma~\ref{thm:sigmanormequivdisc}). The main difficulty is due to the first term on the right-hand side of~\eqref{eq:thmerr-5}. Let $\pstar \in [1,2]$ be the \Holders conjugate of $p\in [2,\infty]$, that is, $1/\pstar + 1/p = 1$. By \Holders inequality and~\eqref{eq:canonicalapp}, we have 
    \begin{equation}\label{eq:thmerr-3}
        (\curl(\taub_h -\Pzh\taub_h), \ub_I - \ub) \leq \|\curl (\taub_h - \Pzh\taub_h)\|_{\Lb^\pstar}\|\ub_I - \ub\|_{\Lb^p} \leq Ch^k\|\curl (\taub_h - \Pzh\taub_h)\|_{\Lb^\pstar}|\ub|_{\Wb^{k,p}}.
    \end{equation}
    
    Before estimating the $L^\pstar$-term, we introduce a partition of the domain $\Omega$. Denote $A_0 := \{\xb\in\Omega: \dist(\xb,\partial\Omega) \leq ch\}$ where $c$ is a constant sufficiently large but independent of $h$. Also, denote 
    \begin{equation}
        A_j := \{\xb\in\Omega: d_{j-1} < \dist(\xb, \partial\Omega) \leq d_{j} \},\,j = 1,\ldots,J\text{ with }d_j:= ch\cdot2^{j},
    \end{equation}
   where $J \in \mathbb{N}$ is the maximum index such that $A_J \neq\emptyset$. We then have that $\Omega = \bigcup_{j=0}^J A_j$. See Figure~\ref{fig:partition} for a sketch of the partition.
   \begin{figure}
       \centering
       \input{fig_partition.tex}
       \caption{Sketch of the partition $\Omega = \bigcup_{j=0}^J A_j$}
       \label{fig:partition}
   \end{figure}
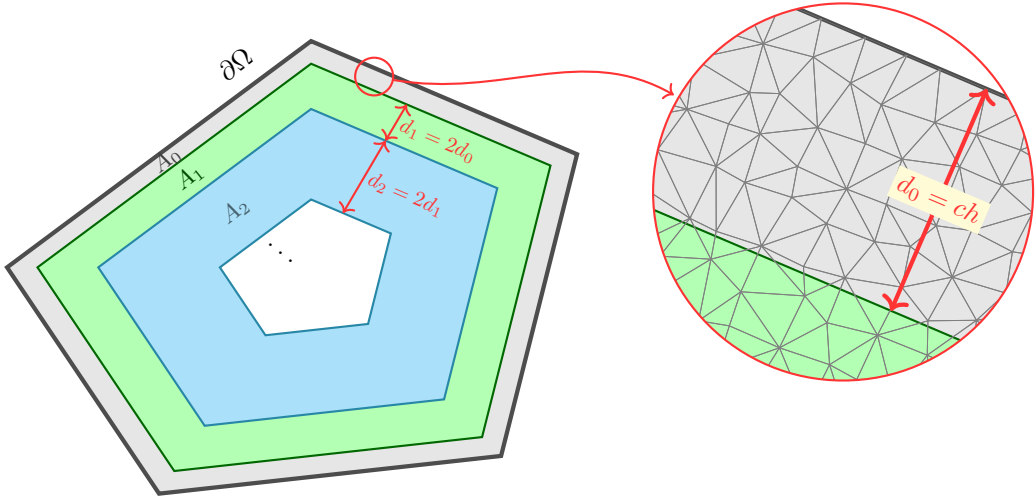
   This partition is motivated by the \emph{dyadic annuli decomposition}, which is an instrumental tool in showing pointwise estimates~\cite{wahlbin_1991}. The enlarged subdomains are denoted by
   \begin{equation}
       A_j' := A_{j-1}\cup A_j\cup A_{j+1},\, j=1,\ldots,J-1, \quad A_0':=A_0\cup A_1,\quad A_J':= A_{J-1}\cup A_J.\\
   \end{equation}
   It is easy to see that 
   \begin{equation}
       |A_j|\leq Cd_j,\quad\dist(\partial A_j, \partial A'_j) \simeq d_j, j = 1,\ldots,J,
   \end{equation}
   and $J\sim\log h$, assuming $\text{diam}(\Omega)\sim1$. 
    
    Furthermore, we need the following crucial lemma, which will be proved later.
    \begin{lemma}[local estimate of discrete curl-harmonic functions]\label{thm:lclest}
        Let $\Omega'\subset\subset\Omega''\subset\subset\Omega$ and $d := \dist(\partial\Omega',\partial\Omega'') > ch$ where $c$ is a constant sufficiently large and independent of $h$.
        If $\xib_h \in \Sigmabh$ satisfies 
        \begin{equation}\label{def:curlharm}
            (\curl\xib_h,\curl\psib_h) = 0 \quad\forall\,\psib_h\in\Sigmab_{h,0},
        \end{equation}
        then there exists a positive constant $C$ independent of $h$ such that
        \begin{equation}\label{eq:lclest}
            \|\curl\xib_h\|_{\Lb^2(\Omega')} \leq Cd^{-1}\|\xib_h\|_{\Lb^2(\Omega'')}.
        \end{equation}
    \end{lemma}
    \noindent In view of~\eqref{def:tau0h}, $\taub_h - \Pzh\taub_h$ satisfies~\eqref{def:curlharm}. Since $\Omega = \bigcup_{j=0}^J A_j$, we obtain that
    \begin{equation}\label{eq:thmerr-4}
        \begin{aligned}
            \|\curl(\taub_h - \Pzh\taub_h)\|_{\Lb^\pstar}^\pstar &= \|\curl(\taub_h - \Pzh\taub_h)\|^\pstar_{\Lb^\pstar(A_0)} + \sum_{j=1}^J\|\curl(\taub_h - \Pzh\taub_h)\|^\pstar_{\Lb^\pstar(A_j)} \\
            &\leq C|A_0|^{1-\frac{\pstar}{2}}\|\curl(\taub_h - \Pzh\taub_h)\|^\pstar_{\Lb^2(A_0)} + \sum_{j=1}^J C|A_j|^{1-\frac{\pstar}{2}}\|\curl(\taub_h - \Pzh\taub_h)\|^\pstar_{\Lb^2(A_j)} \\
            &\leq Ch^{1-\frac{\pstar}{2}}\|\curl(\taub_h - \Pzh\taub_h)\|^\pstar_{\Lb^2(A_0)} + \sum_{j=1}^J Cd_j^{1-\frac{\pstar}{2}}\|\curl(\taub_h - \Pzh\taub_h)\|^\pstar_{\Lb^2(A_j)} \\
            &\leq Ch^{1-\frac{3\pstar}{2}}\|\taub_h - \Pzh\taub_h\|^q_{\Lb^2(A_0')} + \sum_{j=1}^J Cd_j^{1-\frac{3\pstar}{2}}\|\taub_h - \Pzh\taub_h\|^\pstar_{\Lb^2(A_j')}
        \end{aligned}
    \end{equation}
    where we applied Lemma~\ref{thm:lclest} in the last step.
    Recall the \Holders inequality $\sum_{k=1}^n|x_ky_k|\leq\left(\sum_{k=1}^n|x_k|^r\right)^{1/r}\left(\sum_{k=1}^n|y_k|^t\right)^{1/t}$ for $r,t\in(1,\infty),1/r+1/t=1$. Hence, for $q \in [1,2)$, we obtain
    \begin{equation}\label{eq:thmerr-4-2}
        \begin{aligned}
            \|\curl(\taub_h - \Pzh\taub_h)\|_{\Lb^\pstar}^\pstar 
            &\leq Ch^{1-\frac{3\pstar}{2}}\|\taub_h - \Pzh\taub_h\|^\pstar + C \left(\sum_{j=1}^J d_j^{\frac{2 - 3\pstar}{2-\pstar}}\right)^{\frac{2-q}{2}}\left(\sum_{j=1}^J \|\taub_h - \Pzh\taub_h\|^2_{\Lb^2(A_j')}\right)^{\pstar/2} \\
            &\leq Ch^{1-\frac{3\pstar}{2}}\|\taub_h - \Pzh\taub_h\|^q \\
            &\leq Ch^{1-\frac{3\pstar}{2}}\|\taub_h\|_{\Sigmabh}^q 
        \end{aligned}
    \end{equation}
    Here, we conducted the following elementary calculation
    \begin{equation}
        \sum_{j=1}^J d_j^{\frac{2 - 3\pstar}{2-\pstar}} = \sum_{j=1}^J (ch\cdot2^j)^{\frac{2 - 3\pstar}{2-\pstar}} =(ch)^{\frac{2 - 3\pstar}{2-\pstar}} \sum_{j=1}^J 2^{\frac{2 - 3\pstar}{2-\pstar}j} < (ch)^{\frac{2 - 3\pstar}{2-\pstar}} \left(2^{\frac{2 - 3\pstar}{\pstar-2}} - 1\right)^{-1} 
    \end{equation} and used the $\Sigmabh$-norm equivalence~\eqref{eq:sigmahnormeq}. The conclusion follows from setting $p = \infty, \pstar = 1$, the estimates~\eqref{eq:thmerr-1}\eqref{eq:thmerr-2}\eqref{eq:thmerr-3}\eqref{eq:thmerr-4} and the triangle inequality.
\end{proof}

\begin{proof}[Proof of Lemma~\ref{thm:lclest}]
    Introduce an intermediate subdomain $\Omega_h\subset\Omega$ that is the union of elements and satisfies $\Omega'\subset\subset\Omega_h\subset\subset\Omega''$ and $\dist(\partial\Omega',\partial\Omega_h)>d/2$. Note that such $\Omega_h$ can always be found since we assume $\dist(\partial\Omega',\partial\Omega'') > ch$ with a sufficiently large $c$.
    Let $w \in C^\infty_c(\Omega)$ be a cut-off scalar function such that $w \equiv 1$ on $\Omega'$ and $\supp(w)\subset\Omega_h$. We require that $|\partial^{r} w(\xb)| \leq Cd^{-r}$ for any $\xb \in \Omega$ and $r \in \mathbb{N}$ with $r \geq 1$. 
    To show~\eqref{eq:lclest}, we start with
    \begin{equation}\label{eq:lemlclest-1}
        \begin{aligned}
            \|w\curl\xib_h\|^2 &= (\curl\xib_h,w^2\curl\xib_h) \\
            &= (\curl\xib_h,\curl(w^2\xib_h)) - (\curl\xib_h,2w\grad w\times\xib_h)\\
            &= \underbrace{(\curl\xib_h,\curl(w^2\xib_h - \IND (w^2\xib_h)))}_{I_1} - \underbrace{(2w\curl\xib_h,\grad w\times\xib_h)}_{I_2}
        \end{aligned}
    \end{equation}
    where we have used the curl-harmonic property~\eqref{def:curlharm}.
    By~\eqref{eq:canonicalapp}, $I_1$ is bounded by
    \begin{align}
        I_1 &\leq Ch^{k}\|\curl\xib_h\|_{\Lb^2(\Omega_h)}\left(\sum_{T:T\subset \Omega_h}|\curl(w^2\xib_h)|^2_{\Hb^{k}(T)}\right)^{1/2}.\label{eq:lemlclest-8}
    \end{align}
    Note that $\partial^k(\curl\xib_h)$ vanishes. We bound each term in the summation by
    \begin{equation}\label{eq:lemlclest-9}
    \begin{aligned}
        |\curl(w^2\xib_h)|_{\Hb^{k}(T)} &\leq |\grad (w^2) \times\xib_h|_{\Hb^{k}(T)} + |w^2\curl\xib_h|_{\Hb^{k}(T)}\\
        &\leq C\sum_{r = 0}^{k} \underbrace{\|\partial^{r+1}(w^2)\|_{L^\infty(T)}\|\partial^{k-r}\xib_h\|_{\Lb^2(T)}}_{I_{1,1}} \\
        &\quad\quad + C\sum_{r=1}^{k} \underbrace{\|\partial^{r}(w^2)\|_{L^\infty(T)}\|\partial^{k-r}(\curl\xib_h)\|_{\Lb^2(T)}}_{I_{1,2}}.
    \end{aligned}
    \end{equation}
    $I_{1,2}$ is directly addressed by the inverse inequality and $d>ch$:
    \begin{equation}\label{eq:lemlclest-14}
        I_{1,2} \leq Cd^{-r}h^{-k+r}\|\curl\xib_h\|_{\Lb^2(T)} \leq Cd^{-1}h^{-k+1}\|\curl\xib_h\|_{\Lb^2(T)} .
    \end{equation}
    To bound $I_{1,1}$, we decompose $\xib_h = \hat{\xib}_h + \grad \hat{\varphi}_h$ where $\hat{\xib}_h \in \Sigmab_k(T), \hat{\varphi}_h \in \mathcal{P}_k(T)$ according to the local regular decomposition in Lemma~\ref{thm:lclregdecomp}. The following three cases are addressed separately: 
    \begin{itemize}
        \item when $r=0$,
        \begin{equation}\label{eq:lemlclest-12}
            \begin{aligned}
                I_{1,1} &= \|\partial(w^2)\|_{L^\infty(T)}\|\partial^{k}(\hat{\xib}_h+\grad\hat{\varphi}_h)\|_{\Lb^2(T)} \\
                &\leq Cd^{-1}\left(\|\partial^{k-1}\grad \hat{\xib}_h\|_{\Lb^2(T)} + \|\partial^{k}\grad\hat{\varphi}_h\|_{\Lb^2(T)}\right) \\
                &=Cd^{-1}\|\partial^{k-1}\grad \hat{\xib}_h\|_{\Lb^2(T)} \\
                &\leq Cd^{-1}h^{-k+1}\|\grad \hat{\xib}_h\|_{\Lb^2(T)} \\
                &\leq Cd^{-1}h^{-k+1}\|\curl\xib_h\|_{\Lb^2(T)}
            \end{aligned}
        \end{equation}
        where we have used the property of $w$, the stability estimate~\eqref{eq:lclregdecompstab}, and the fact that $\partial^{k}\grad\hat{\varphi}_h$ vanishes;
        \item when $1\leq r\leq k-1$,
        \begin{equation}\label{eq:lemlclest-13}
            \begin{aligned}
                I_{1,1} &= \|\partial^{r+1}(w^2)\|_{L^\infty(T)}\|\partial^{k-r}(\hat{\xib}_h+\grad\hat{\varphi}_h)\|_{\Lb^2(T)} \\
                &\leq Cd^{-r-1}\left(\|\partial^{k-r-1}\grad \hat{\xib}_h\|_{\Lb^2(T)} + \|\partial^{k-r}\grad\hat{\varphi}_h\|_{\Lb^2(T)}\right) \\
                &\leq Cd^{-r-1}\left(h^{-k+r+1}\|\grad \hat{\xib}_h\|_{\Lb^2(T)} + h
                ^{-k+r}\|\grad\hat{\varphi}_h\|_{\Lb^2(T)}\right) \\
                &\leq Cd^{-r-1}\left(h^{-k+r+1}\|\curl\xib_h\|_{\Lb^2(T)} + h
                ^{-k+r}\|\xib_h\|_{\Lb^2(T)}\right) \\
                &\leq Cd^{-1}h^{-k+1}\left(\|\curl\xib_h\|_{\Lb^2(T)} + d^{-1}\|\xib_h\|_{\Lb^2(T)}\right)
            \end{aligned}
        \end{equation}
        where we have used the property of $w$, the stability estimates~\eqref{eq:lclregdecompstab}, and $d > ch$;
        \item when $r = k$,
        \begin{equation}\label{eq:lemlclest-15}
            I_{1,1} = \|\partial^{k+1}(w^2)\|_{L^\infty(T)}\|\xib_h\|_{\Lb^2(T)} \leq C d^{-k-1}\|\xib_h\|_{\Lb^2(T)} \leq Ch^{-k+1}d^{-2}\|\xib_h\|_{\Lb^2(T)}
        \end{equation}
        where we have used the property of $w$ and $d > ch$.
    \end{itemize}
    Combining~\eqref{eq:lemlclest-8}\eqref{eq:lemlclest-9}\eqref{eq:lemlclest-12}\eqref{eq:lemlclest-13}\eqref{eq:lemlclest-15}\eqref{eq:lemlclest-14}, we obtain
    \begin{equation}\label{eq:lemlclest-11}
    \begin{aligned}
        I_1 &\leq C\|\curl\xib_h\|_{\Lb^2(\Omega_h)}\left(hd^{-2}\|\xib_h\|_{\Lb^2(\Omega_h)} + hd^{-1}\|\curl\xib_h\| _{\Lb^2(\Omega_h)}\right) \\
        &\leq Cd^{-2}\|\xib_h\|_{\Lb^2(\Omega_h)}^2 + Chd^{-1}\|\curl\xib_h\|_{\Lb^2(\Omega_h)}^2
    \end{aligned}
    \end{equation}
    by the inverse inequality.
    Term $I_2$ is directly bounded by
    \begin{equation}\label{eq:lemlclest-10}
       I_2 \leq Cd^{-1}\|w\curl\xib_h\|\|\xib_h\|_{L^2(\Omega_h)} \leq \frac{1}{2}\|w\curl\xib_h\|^2 + Cd^{-2}\|\xib_h\|_{L^2(\Omega_h)}^2.
    \end{equation}
    The first term in~\eqref{eq:lemlclest-10} can be absorbed into the left-hand side of~\eqref{eq:lemlclest-1}.
    Combining~\eqref{eq:lemlclest-1}\eqref{eq:lemlclest-11}\eqref{eq:lemlclest-10} and noting that $\|\curl\xib_h\|_{L^2(\Omega')} \leq \|w\curl\xib_h\|$, we have 
    \begin{equation}\label{eq:lemlclest-6}
        \|\curl\xib_h\|^2_{L^2(\Omega')} \leq Chd^{-1}\|\curl\xib_h\|_{\Lb^2(\Omega'')}^2 + Cd^{-2}\|\xib_h\|^2_{\Lb^2(\Omega'')}.
    \end{equation}
    Applying~\eqref{eq:lemlclest-6} to term $\|\curl\xib_h\|^2_{L^2(\Omega'')}$ on the right-hand side (without changing $\Omega''$ by abuse of notation) yields 
    \begin{equation}
        \|\curl\xib_h\|^2_{L^2(\Omega')} \leq Ch^2d^{-2}\|\curl\xib_h\|^2_{L^2(\Omega'')} + Cd^{-2}\|\xib_h\|_{L^2(\Omega'')}^2.
    \end{equation}
    The conclusion~\eqref{eq:lclest} follows from applying the inverse inequality once again (the necessary enlargement of the subdomain $\Omega''$ is once again omitted by abuse of notation).
\end{proof}
\begin{remark}[alternative proof of Lemma~\ref{thm:lclest}]
    Estimate~\eqref{eq:lclest} is a discrete Caccioppoli-type inequality. The original Caccioppoli inequality is stated for scalar harmonic functions. Its discrete counterpart is a crucial tool in pointwise and $L^p$ estimates of discretizations of scalar elliptic equations; see, e.g.,~\cite{schatz_1977,wahlbin_1991,chen_2004,leykekhman_2021}. Its counterpart for curl-elliptic problems (that is, Lemma~\ref{thm:lclest}) is rather new and has been established for studying discretizations of Maxwell's equations~\cite{faustmann_2022, ma_2025}. The lowest-order case is first shown in~\cite{faustmann_2022} while recently it has been extended to high-order cases in~\cite{ma_2025} relying on a thorough analysis of \Nedelec elements~\cite[][Lemma~2.4]{ma_2025}. We have provided an alternative proof of the same result relying on the smoothed \Poincare lifting (see Lemma~\ref{thm:lclregdecomp}).
\end{remark}

The $\Lb^2$-error bound for $\ub_h$ is improved using the duality technique. 
\begin{theorem}[error estimate of $\ub_h$ in $\Lb^2$]\label{thm:l2errest}
    Assume that $\Omega$ is convex. There exists a positive constant $C$ such that 
    \begin{equation}\label{eq:l2errest}
        \|\ub - \ub_h\| \leq \begin{cases}
            C h^{k}\left(|\ub|_{\Wb^{k,\infty}} + \|\ub\|_{\Hb^{k+1}}\right) & \text{if } k \geq 2,  \\
            C h^{5/6}\left(|\ub|_{\Wb^{1,\infty}} + \|\ub\|_{\Hb^{2}}\right) & \text{if } k = 1. 
        \end{cases}
    \end{equation}
\end{theorem}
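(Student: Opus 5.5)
We argue by duality in the spirit of \cite{arnold_2012}, reusing the machinery of the proof of Theorem~\ref{thm:eneerrest}. Write $\ub - \ub_h = (\ub - \ub_I) + (\ub_I - \ub_h)$ with $\ub_I := \IRT\ub$. The first part is $O(h^k|\ub|_{\Hb^k})$ by \eqref{eq:canonicalapp}, so it suffices to bound $\eb_h := \ub_I - \ub_h \in \Vb_{h,0}$ in $\Lb^2$. Set $\etab_h := \mub_\Pi - \mub_h$ with $\mub_\Pi := \PND\mub$.

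\textbf{Dual problem.} We take the dual problem \eqref{eq:hodgelap} with right-hand side $\eb_h$: find $(\sigmab, \zb)$ with $-\vecDelta\zb = \eb_h$, $\zb = \zerob$ on $\partial\Omega$, and $\sigmab = \curl\zb$. The bilinear form $\a$ is skew in the off-diagonal blocks, so the adjoint problem has the same structure with the sign of $\sigmab$ flipped. Because $\Omega$ is convex, $\zb \in \Hb^2\cap\Hb^1_0$ with $\|\zb\|_{\Hb^2} \le C\|\eb_h\|$, which gives $\sigmab \in \Hb^1$ and $\sigmab \in \Xb$. Testing the dual problem with $(\etab_h, \eb_h)$ and inserting discrete approximations $(\PND\sigmab, \IRT\zb)$, Galerkin orthogonality gives
\[
\|\eb_h\|^2 = \a((\etab_h,\eb_h),(\sigmab,\zb)) = \a((\etab_h,\eb_h),(\sigmab - \PND\sigmab,\, \zb - \IRT\zb)) + \a((\mub_\Pi-\mub,\ub_I-\ub),(\PND\sigmab,\IRT\zb)),
\]
up to sign conventions. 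Every pairing in these two terms must then be estimated separately.

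\textbf{Routine terms.} In the first term, $(\div\eb_h, \div(\zb - \IRT\zb))$ and $(\curl\etab_h, \zb - \IRT\zb)$ give $h\cdot h^{k-1/2}$-type bounds. For the curl pairing, Theorem~\ref{thm:eneerrest} controls $h\|\curl(\mub-\mub_h)\|$, and $\|\zb - \IRT\zb\| \le Ch\|\zb\|_{\Hb^1}$ with $\|\zb\|_{\Hb^1} \le C\|\zb\|_{\Hb^2} \le C\|\eb_h\|$. The pairing $(\etab_h, \sigmab - \PND\sigmab)$ uses \eqref{eq:curlprojerrl2} with $r = 1/2$, which gives only $h^{1/2}\cdot h^{k-1/2} = h^k$. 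The pairing $(\curl(\sigmab-\PND\sigmab), \eb_h)$ is handled by \eqref{eq:superconv} with $r = 1$. In the second term, $(\curl(\mub_\Pi - \mub), \IRT\zb)$ uses \eqref{eq:superconv} with $r = k$, giving $h^{k+1/2}$. The term $(\mub_\Pi - \mub, \PND\sigmab)$ uses \eqref{eq:curlprojerrl2} with $r = k - 1/2$ times $\|\PND\sigmab\| \le C\|\eb_h\|$, which gives $h^{k-1/2}$. This is the first place where the plan fails to reach $h^k$. I would try to improve it by writing $\PND\sigmab = \sigmab + (\PND\sigmab - \sigmab)$ and exploiting $\mub - \mub_\Pi \perp$ discrete gradients, together with the orthogonality of $\mub \in \Xb$.

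\textbf{Main obstacle.} The hardest term is $(\curl\taub_h, \ub_I - \ub)$ with $\taub_h = \PND\sigmab$ (or $\sigmab - \PND\sigmab$ for the first term), exactly as in Theorem~\ref{thm:eneerrest}. Here $\curl\PND\sigmab$ is bounded in $\Lb^2$ but not in $\Hb^1$. I would split $\taub_h = (\taub_h - \Pzh\taub_h) + \Pzh\taub_h$ and treat the two parts separately.

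For the second part, $\Pzh\taub_h$ approximates the $\Hzcurl$ part and has $\curl$ close to $\curl\sigmab$. Here $\curl\sigmab = \eb_h + \grad\div\zb$ (using $\curl\curl\zb = -\vecDelta\zb + \grad\div\zb$) is in $\Lb^2$ only. This yields $h^k\|\eb_h\|$ after using Hölder and $|\ub|_{\Wb^{k,\infty}}$.

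For the discrete curl-harmonic part, I would repeat the dyadic annuli argument \eqref{eq:thmerr-4}--\eqref{eq:thmerr-4-2} with Lemma~\ref{thm:lclest}. The difference is that $\|\taub_h - \Pzh\taub_h\|_{\Lb^2(A_j')}$ should now be small near the boundary, because $\sigmab$ is smooth ($\Hb^1$). Concretely, the aim is $\|\cdot\|_{\Lb^2(A_j')} \lesssim d_j^{1/2}\|\sigmab\|_{\Hb^1}$ by a trace/strip estimate, which gains an extra $h^{1/2}$ and gives $h^k$. For $k \ge 2$ the extra polynomial degree absorbs the log-type losses in the sum. For $k = 1$, balancing the strip width against the $\Lb^q$--$\Lb^p$ Hölder exponents appears to give only the stated $h^{5/6}$ rate; I would optimize $p$ and the cut-off width to get exactly that exponent.
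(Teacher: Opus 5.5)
There is a genuine gap: the difficulty sits in a different term from the one you single out. The pairing $(\curl\boldsymbol{\eta}_h,\zb-\IRT\zb)$ in your first term is not routine. With $\|\zb-\IRT\zb\|\le Ch\|\zb\|_{\Hb^1}$ and $h\|\curl\boldsymbol{\eta}_h\|\lesssim h^{k-1/2}$, Cauchy--Schwarz gives only $h^{k-1/2}\|\boldsymbol{e}_h\|$, not $h\cdot h^{k-1/2}$. For $k\ge 2$ this is repaired by $\|\zb-\IRT\zb\|\le Ch^2|\zb|_{\Hb^2}$. For $k=1$, however, the interpolant gains only one power of $h$, and this term alone caps the rate.

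The paper handles it by splitting $\boldsymbol{\eta}_h=(\boldsymbol{\eta}_h-\Pzh\boldsymbol{\eta}_h)+\Pzh\boldsymbol{\eta}_h$. The second piece contributes $O(h\|\boldsymbol{\eta}_h\|_{\Sigmabh}\|\boldsymbol{e}_h\|)$ by Lemma~\ref{thm:sigmanormequivdisc}. For the discrete curl-harmonic piece, the paper uses the dyadic bound \eqref{eq:thmerr-4-2} with $q=6/5$, i.e.\ $\|\curl(\boldsymbol{\eta}_h-\Pzh\boldsymbol{\eta}_h)\|_{\Lb^{6/5}}\le Ch^{-2/3}\|\boldsymbol{\eta}_h\|_{\Sigmabh}$. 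This is paired by H\"older with $\|\zb-\IRT\zb\|_{\Lb^6}\le Ch|\zb|_{\Wb^{1,6}}\le Ch\|\zb\|_{\Hb^2}$. Since $\|\boldsymbol{\eta}_h\|_{\Sigmabh}\lesssim h^{1/2}$, this yields $h^{1/3}\cdot h^{1/2}=h^{5/6}$. The exponent is dictated by the embedding $\Hb^2\hookrightarrow\Wb^{1,6}$, which fixes $q=6/5$. It does not come from optimizing a strip width, and no logarithmic losses occur, because the dyadic sum is geometric.

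Conversely, the term you call the main obstacle is elementary. By the definition of $\PND$, $\|\curl\PND\boldsymbol{\sigma}\|\le\|\curl\boldsymbol{\sigma}\|\le C\|\zb\|_{\Hb^2}\le C\|\boldsymbol{e}_h\|$, so $(\curl\PND\boldsymbol{\sigma},\ub_I-\ub)\le Ch^k|\ub|_{\Hb^k}\|\boldsymbol{e}_h\|$ by Cauchy--Schwarz. No dyadic argument is needed, and your proposed strip bound on $\|\taub_h-\Pzh\taub_h\|_{\Lb^2(A_j')}$ is unjustified in any case.

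The term $(\mub_\Pi-\mub,\PND\boldsymbol{\sigma})$ that you leave at $h^{k-1/2}$ can be closed as follows. Write $\PND\boldsymbol{\sigma}=\boldsymbol{\sigma}-(\boldsymbol{\sigma}-\PND\boldsymbol{\sigma})$ and bound the product of the two projection errors by $h^{k-1/2}\cdot h^{1/2}$ via \eqref{eq:curlprojerrl2}. Then integrate $(\mub_\Pi-\mub,\boldsymbol{\sigma})$ by parts, with $\boldsymbol{\sigma}=-\curl\zb$ and $\zb\in\Hb^1_0$. It combines with $(\curl(\mub_\Pi-\mub),\IRT\zb)$ into $-(\curl(\mub_\Pi-\mub),\zb-\IRT\zb)=O(h^{k-1}\cdot h)$. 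You cannot instead bound $(\curl(\mub_\Pi-\mub),\IRT\zb)$ separately by \eqref{eq:superconv} with $r=k$, since that requires $\ub\in\Hb^{k+2}$. Likewise, $r=1$ for $(\curl(\boldsymbol{\sigma}-\PND\boldsymbol{\sigma}),\boldsymbol{e}_h)$ would require $\zb\in\Hb^3$, although $r=0$ suffices there.

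All of this bookkeeping disappears if you do not split off $\ub-\ub_I$. Take the dual solution for the right-hand side $\ub-\ub_h$; Galerkin orthogonality then gives $\|\ub-\ub_h\|^2=\a((\mub-\mub_h,\ub-\ub_h),(\boldsymbol{\sigma}-\PND\boldsymbol{\sigma},\zb-\IRT\zb))$ directly, which is the paper's route. Equivalently, by the dual identity your consistency term equals $(\ub_I-\ub,\boldsymbol{e}_h)$ minus a product of approximation errors.
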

\begin{proof}
    Let $\wb \in \Hb^1_0$ solve $-\vecDelta \wb = \ub - \ub_h$. Denote $\lambdab := -\curl\wb$. Since $\Omega$ is assumed convex, by Theorem~\ref{thm:reglap} we have 
    \begin{equation}\label{eq:thml2est-reg}
        \|\wb\|_{\Hb^2} +\|\lambdab\|_{\Hb^1} \leq C\|\ub - \ub_h\|. 
    \end{equation}
    By the consistency of~\eqref{eq:hodgelapdisc}, it holds that
    \begin{equation}\label{eq:thml2est-1}
        \begin{aligned}
            \|\ub-\ub_h\|^2 &= \a((\mub-\mub_h,\ub-\ub_h), (\lambdab, \wb)) \\
            &= \a((\mub-\mub_h,\ub-\ub_h),(\lambdab - \lambdab_\Pi, \wb - \wb_I)) 
        \end{aligned}
    \end{equation}
    where we set $\wb_I := \IRT\wb\in \Vb_{h,0}$ and $\lambdab_\Pi := \PND\lambdab \in \Sigmabh$ as defined in~\eqref{def:proj}. 
    In view of the definition of the bilinear form $\a$ (see~\eqref{def:a}), we have by~\eqref{eq:thml2est-1} 
    \begin{equation}\label{eq:thml2est-split}
        \begin{aligned}
            \|\ub-\ub_h\|^2 &= \underbrace{(\div (\wb-\wb_I),\div(\ub-\ub_h))}_{I_1} + \underbrace{(\curl (\mub - \mub_h), \wb - \wb_I)}_{I_2}\\
            &\quad\quad - \underbrace{(\curl (\lambdab - \lambdab_\Pi), \ub -\ub_h)}_{I_3} + \underbrace{(\lambdab - \lambdab_\Pi, \mub-\mub_h)}_{I_4}.
        \end{aligned}
    \end{equation}
    
    Using the canonical interpolation~\eqref{eq:canonicalapp}, we have $I_1 \leq h\|\div(\ub-\ub_h)\||\div\wb|_{H^1}$.

    When $k \geq 2$, we have by~\eqref{eq:canonicalapp} that $I_2 \leq  Ch\cdot h\|\curl(\mub - \mub_h)\||\wb|_{\Hb^2}.$
    Estimating $I_2$ in the lowest-order case $k=1$ is slightly more involved. Denote $\mub_\Pi := \PND\mub\in \Sigmabh$. Recall the projection $\Pzh$ defined in~\eqref{def:tau0h}. We proceed with
    \begin{align}
        I_2 = \underbrace{(\curl(\mub-\mub_\Pi),\wb-\wb_I)}_{I_{2,1}} + \underbrace{(\curl((\mub_\Pi-\mub_h) - \Pzh(\mub_\Pi-\mub_h)),\wb-\wb_I)}_{I_{2,2}} + \underbrace{(\curl\Pzh(\mub_\Pi-\mub_h),\wb-\wb_I)}_{I_{2,3}}.\label{eq:thml2est-I_2}
    \end{align}
    Due to~\eqref{eq:curlprojerrcurl} and~\eqref{eq:canonicalapp}, we have 
    $I_{2,1} \leq Ch\|\curl\mub\||\wb|_{\Hb^1}$.
    Applying~\eqref{eq:thmerr-4} in the case of $q = 6/5$, the interpolation approximation~\eqref{eq:canonicalapp} and the $\Sigmabh$-norm equivalence (see Lemma~\ref{thm:sigmanormequivdisc}), we obtain
    \begin{equation}
        \begin{aligned}
            I_{2,2} &\leq  \|\curl((\mub_\Pi-\mub_h) - \Pzh(\mub_\Pi-\mub_h))\|_{\Lb^{6/5}}\|\wb-\wb_I\|_{\Lb^6}\\
            &\leq Ch^{1/3}\|(\mub_\Pi-\mub_h) - \Pzh(\mub_\Pi-\mub_h)\||\wb|_{\Wb^{1,6}} \\
            &\leq Ch^{1/3}\|\mub_\Pi-\mub_h\|_{\Sigmabh}\|\wb\|_{\Hb^2} \\
            &\leq Ch^{1/3}(\|\mub - \mub_h\|_{\Sigmabh} + \|\mub - \mub_\Pi\|_{\Sigmabh})\|\wb\|_{\Hb^2} \\
            &\leq Ch^{1/3}(\|\mub-\mub_h\|_{\Sigmabh} + h^{1/2}\|\mub\|_{\Hb^1})\|\wb\|_{\Hb^2}
        \end{aligned}
    \end{equation}
    where we have used the Sobolev embedding $H^{2} \hookrightarrow W^{1,6}$ in the third step and applied~\eqref{eq:superconv}\eqref{eq:curlprojerrl2} in the last step (notice that $\mub \in \Xb$). Similarly, we have
    \begin{equation}
        I_{2,3} \leq Ch\|\mub_\Pi - \mub_h\|_{\Sigmabh}|\wb|_{\Hb^1} \leq Ch(\|\mub-\mub_h\|_{\Sigmabh} + h^{1/2}\|\mub\|_{\Hb^1})|\wb|_{\Hb^1}.
    \end{equation}
    Apparently, $I_{2,2}$ dominates $I_2$.
    
    To address $I_3$, we introduce $\ub_I := \IRT\ub$ and deduce
    \begin{equation}
        \begin{aligned}
            I_3 &= \underbrace{(\curl(\lambdab - \lambdab_\Pi), \ub - \ub_I)}_{I_{3,1}} + \underbrace{(\curl(\lambdab - \lambdab_\Pi), \ub_I - \ub_h)}_{I_{3,2}}.
        \end{aligned}
    \end{equation}
    Using~\eqref{eq:canonicalapp} and~\eqref{eq:curlprojerrcurl}, we have $I_{3,1} \leq Ch^k\|\curl\lambdab\||\ub|_{\Hb^{k}}$. 
    For $I_{3,2}$, we apply~\eqref{eq:curlprojerrcurl}\eqref{eq:superconv}\eqref{eq:canonicalapp} and obtain
    \begin{equation}
        \begin{aligned}
            I_{3,2} &\leq Ch^{1/2}\|\curl\lambdab\|\|\div(\ub_I-\ub_h)\| \\
            & \leq Ch^{1/2}\|\curl\lambdab\|\left(\|\div(\ub_I-\ub)\| + \|\div(\ub-\ub_h)\|\right)\\
            & \leq Ch^{1/2}\|\curl\lambdab\|\left(h^{k}|\ub|_{\Hb^{k+1}} +  \|\div(\ub-\ub_h)\|\right).
        \end{aligned}
    \end{equation}

    It remains to bound the term $I_4$ in~\eqref{eq:thml2est-split}. Notice that $\lambdab \in \Xb$, and thus~\eqref{eq:curlprojerrl2} is applicable. We have then $I_4 \leq Ch^{1/2}\|\lambdab\|_{\Hb^1}\|\mub - \mub_h\|$.

    Collecting all these estimates, applying~\eqref{eq:thml2est-reg} and inserting~\eqref{eq:eneerrest}, we obtain~\eqref{eq:l2errest}.
\end{proof}

\section{Application to the Stokes problem}\label{sec:stokes}
Using~\eqref{eq:vvpdisc} as a discretization of~\eqref{eq:veclap} does not seem attractive at all, as the standard Lagrange element works perfectly for~\eqref{eq:veclap}. However, discretizing the vector (Dirichlet) Laplacian in the form of~\eqref{eq:vvpdisc} is desirable when solving certain problems. In this section, we discuss a valid discretization of the Stokes problem that arises naturally from~\eqref{eq:vvpdisc}.

Let $\Omega \subset \R^3$ be a bounded Lipschitz polygonal domain with one connected component. Consider the Stokes problem
\begin{equation}\label{eq:stokes}
    \begin{aligned}
        - \vecDelta \ub +\grad p &= \fb &&\text{ in }\Omega, \\
        \div \ub &= g &&\text{ in }\Omega,\\
        \ub &= \zerob &&\text{ on }\partial\Omega,
    \end{aligned}
\end{equation}
where $\fb \in \Hzdiv'$ and $g \in \Ltwoz$.
A substantial number of finite element discretizations have been proposed for solving~\eqref{eq:stokes}, featuring different properties. Classical schemes include the Taylor-Hood element, the mini-element, the Scott-Vogelius element, etc., while more recent works focus on schemes with the exact divergence-free property under the guidance of the Stokes complex; see~\cite{john_2017,neilan_2020} and references therein.

One of the feasible discretizations of~\eqref{eq:stokes} is to base it on the mixed form discretization of the Hodge Laplace BVP~\eqref{eq:potvecdisc}, enforcing the divergence condition via a Lagrange multiplier representing the pressure $p$. Introducing the vorticity of the fluid $\mub := \curl\ub$ as a new unknown, one can reformulate~\eqref{eq:stokes} as seeking $(\mub, \ub, p) \in \Sigmab \times \Hzdiv \times \Ltwoz$ such that
\begin{equation}\label{eq:vvpcont}
    \begin{aligned}
        (\mub, \taub) - \langle\curl \taub, \ub\rangle \mathcolor{white}{- (p,\div\vb)}&= 0 &&\forall\,\taub \in \Sigmab, \\
        \langle\curl\mub, \vb\rangle + (\div\ub, \div\vb) - (p,\div\vb)&= \langle\fb, \vb\rangle && \forall\,\vb\in \Hzdiv, \\
        (q,\div \ub) \mathcolor{white}{- (p,\div\vb)} &= (g,q) &&\forall\,q\in \Ltwoz.
    \end{aligned}
\end{equation}
For the same reason as for~\eqref{eq:hodgelap}, the naive choice $\Sigmab = \Hcurl$ leads to an ill-posed problem, and the remedy is to consider $\Sigmab$ defined in~\eqref{def:Sigmab}.

Using standard FEEC-type elements (see~\eqref{def:spaces}), we seek $(\mub_h, \ub_h, p_h) \in \Sigmabh \times  \Vb_{h,0} \times S_{h,0}$ such that
\begin{equation}\label{eq:vvpdisc}
    \begin{aligned}
        (\mub_h, \taub_h) - (\curl \taub_h, \ub_h) \mathcolor{white}{- (p_h,\div\vb_h)}&= 0 &&\forall\,\taub_h \in \Sigmabh, \\
        (\curl\mub_h, \vb_h) + (\div\ub_h, \div\vb_h) - (p_h,\div\vb_h)&= \langle\fb, \vb_h\rangle && \forall\,\vb_h\in\Vb_{h,0}, \\
        (q_h,\div \ub_h) \mathcolor{white}{- (p_h,\div\vb_h)}&= (g, q_h) &&\forall\,q_h\in S_{h,0}.
    \end{aligned}
\end{equation}
This formulation is called the \emph{Vorticity-Velocity-Pressure} (VVP) formulation~\cite{dubois_2003,dubois_2003b,arnold_2012}. The use of an $\Hdiv$-conforming discrete velocity space ensures an exactly divergence-free velocity, resulting in desirable pressure-robust properties~\cite{john_2017}. Additionally, such a formulation is particularly popular in solving coupled Darcy-Stokes problems~\cite{bernardi_2005,vassilevski_2014,alvarez_2016} and is part of the structure-preserving dual-field discretization of the Navier-Stokes equations~\cite{zhang_2022b}.  

\begin{remark}[keeping $(\div\ub,\div\vb)$]
    By considering $p' := p - \div\ub$, the term $(\div\ub,\div\vb)$ in~\eqref{eq:vvpcont} can be discarded. However, we keep this term so that we can reuse the bilinear $\mathsf{a}$ defined as~\eqref{def:a}.
\end{remark}
\begin{remark}[other VVP-formulations]\label{rmk:vvp}
    Alternatively, one may attempt to seek the vorticity $\mub \in \Hdiv$, the velocity $\ub \in \Hzcurl$, and the pressure $p \in H^1/\R$ such that
    \begin{equation}\label{eq:vvpcurl}
        \begin{aligned}
            (\mub, \taub) - (\curl \ub, \taub) &= 0 &&\forall\,\taub \in \Hdiv, \\
            (\mub, \curl \vb) + (\nabla p, \vb) &= (\fb, \vb) &&\forall\,\vb \in \Hzcurl, \\
            (\nabla q,\ub) &= (-g, q) &&\forall\,q \in H^1 /\R.
        \end{aligned}
    \end{equation}
    According to different spaces for the velocity, we refer to~\eqref{eq:vvpcont} as \emph{``$\Hdiv$-based Dirichlet Stokes''} and~\eqref{eq:vvpcurl} as \emph{``$\Hcurl$-based Dirichlet Stokes''}. Putting aside the well-posedness\footnote{For a similar reason of using $\Sigmab \supset \Hcurl$ in the $\Hdiv$-based Dirichlet Stokes, $p$ has to be sought in a suitable superset of $H^1$ to ensure well-posedness.} of~\eqref{eq:vvpcurl}, failure of the FEEC discretizations of~\eqref{eq:vvpcurl} for certain meshes has been pointed out in~\cite[][Section~3.2]{wouter_2026}. Nonetheless, VVP formulations using various spaces are proposed to solve Stokes-type problems subject to various boundary conditions; see Table~\ref{tab:vvp} for a non-exhaustive list. 
    \begin{table}[!htbp]
        \centering
        \begin{tabular}{cccc}
             \hline\hline
             vorticity & velocity & pressure  & references \\
             \hline
             $\Hcurl$ & $\Hdiv$  & $L^2$ & \cite{dubois_2003, bernardi_2005, arnold_2012, vassilevski_2014, anaya_2016, alvarez_2016,zhang_2022b} \\
             $\Hdiv$  & $\Hcurl$ & $H^1$ & \cite{chami_2012,zhang_2022b} \\
             $\Hcurl$ & $\Lb^2$  & $H^1$ & \cite{bernardi_2010, wouter_2026} \\
             $\Hb^1$  & $\Lb^2$  & $L^2$ & \cite{anaya_2021,badia_2025}\\
             \hline\hline
        \end{tabular}
        \caption{A non-exhaustive list of works involving VVP formulations of the Stokes problems using different spaces for the vorticity, velocity and pressure. }
        \label{tab:vvp}
    \end{table}
\end{remark}

\subsection{A priori error analysis}
Comparing the Stokes problem~\eqref{eq:vvpcont} (and its discretization~\eqref{eq:vvpdisc}) with the Hodge Laplace BVP~\eqref{eq:hodgelap} (and its discretization~\eqref{eq:hodgelapdisc}), the former can be interpreted as a saddle-point problem based on the latter. Hence, the analysis of~\eqref{eq:vvpcont}\eqref{eq:vvpdisc} involves combining saddle-point theory (see, e.g.,~\cite[][Chapter~12]{brenner_2007}) with the previously established results for the Hodge Laplace BVP.

Recall the bilinear form $\a: \Sigmab \times \Hzdiv \rightarrow \R$ defined in~\eqref{def:a} and introduce the bilinear form $\b: \Ltwoz \times \Hzdiv \rightarrow \R$ by
\begin{equation}\label{def:b}
    \b(q,\vb) := (q, \div\vb).
\end{equation}
The systems~\eqref{eq:vvpcont} and~\eqref{eq:vvpdisc} can be rewritten as
\begin{equation}\label{eq:vvpcontab}
    \begin{aligned}
        \a((\mub,\ub),(\taub,\vb)) - \b(p,\vb) &= \langle \fb,\vb\rangle && \forall\, (\taub,\vb)\in \Sigmab\times\Hzdiv, \\
        \b(q,\ub) &= (g, q) &&\forall\,q\in \Ltwoz,
    \end{aligned}
\end{equation}
and 
\begin{equation}\label{eq:vvpdiscab}
    \begin{aligned}
        \a((\mub_h,\ub_h),(\taub_h,\vb_h)) - \b(p_h,\vb_h) &= \langle \fb,\vb_h\rangle && \forall\, (\taub_h,\vb_h)\in \Sigmabh\times\Vb_{h,0}, \\
        \b(q_h,\ub_h) &= (g, q_h) &&\forall\,q_h\in S_{h,0},
    \end{aligned}
\end{equation}
respectively. We introduce the following subspaces of $\Hzdiv$ and $\Vb_{h,0}$:
\begin{equation}\label{def:ZZh}
    \Zb := \{\vb \in \Hzdiv: \b(q, \vb) = 0\;\forall\,q\in \Ltwoz \},\quad\Zb_h := \{\vb_h \in \Vb_{h,0}: \b(q_h,\vb_h)=0\;\forall\,q_h\in S_{h,0}\},
\end{equation}
and the following affine spaces:
\begin{equation}\label{def:ZZhaff}
    \Zb^g := \{\vb \in \Hzdiv: \b(q, \vb) = (g,q)\;\forall\,q\in \Ltwoz \},\quad\Zb_h^g := \{\vb_h \in \Vb_{h,0}: \b(q_h,\vb_h)=(g,q_h)\,\forall\,q_h\in S_{h,0}\}.
\end{equation}
Since $\mathrm{div}\Hzdiv = \Ltwoz$ and $\mathrm{div}\Vb_{h,0} = S_{h,0}$, it holds that
\begin{equation}
    \Zb = \Hzdivz,\quad\ \Zb_h = \Vb_{h,0} \cap \Zb.
\end{equation}

First, the inf-sup conditions are established.
\begin{lemma}[inf-sup conditions]\label{thm:infsupvvpstokes}
    The following holds:

    \noindent $(i)$ there exist positive constants $\beta_\a,\beta_\b$ such that
    \begin{align}
        \sup_{0\neq (\taub, \vb)\in\Sigmab\times\Zb}\frac{\a((\mub,\ub),(\taub,\vb))}{\|(\taub,\vb)\|_{\Sigmab\times\Hdiv}} &\geq \beta_\a\|(\mub,\ub)\|_{\Sigmab\times\Hdiv} && \forall\,(\mub,\ub)\in \Sigmab\times\Zb,\label{eq:infsupastokescont}\\
        \sup_{\zerob\neq \vb \in \Hzdiv}\frac{\b(q,\vb)}{\|\vb\|_{\Hdiv}} &\geq \beta_\b \|q\| &&\forall\,q \in \Ltwoz;\label{eq:infsupbstokescont}
    \end{align}

    \noindent $(ii)$ there exist positive constants $\beta_\a',\beta_\b'$ independent of $h$ such that
    \begin{align}
        \sup_{0\neq (\taub_h, \vb_h)\in\Sigmabh\times\Zb_h}\frac{\a((\mub_h,\ub_h),(\taub_h,\vb_h))}{\|(\taub_h, \vb_h)\|_{\Sigmabh\times\Hdiv}} &\geq \beta_\a'\|(\mub_h, \ub_h)\|_{\Sigmabh\times\Hdiv} && \forall\,(\mub_h,\ub_h)\in \Sigmabh\times\Zb_h,\label{eq:infsupastokesdisc} \\
        \sup_{\zerob\neq \vb_h\in \Vb_{h,0}}\frac{\b(q_h,\vb_h)}{\|\vb_h\|_{\Hdiv}} &\geq \beta_\b' \|q_h\| &&\forall\,q_h \in S_{h,0}.\label{eq:infsupbstokesdisc}
    \end{align}
\end{lemma}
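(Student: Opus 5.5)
The plan is to treat the four inequalities in two groups. The $\b$-inf-sup conditions \eqref{eq:infsupbstokescont} and \eqref{eq:infsupbstokesdisc} are standard surjectivity statements for the divergence. The $\a$-inf-sup conditions \eqref{eq:infsupastokescont} and \eqref{eq:infsupastokesdisc} follow by rerunning the proofs of Lemmas~\ref{thm:bddinfsup} and~\ref{thm:bddinfsupdisc}, now with test functions restricted to $\Zb$ and $\Zb_h$.

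For \eqref{eq:infsupbstokescont}, fix $q\in\Ltwoz$ and solve the Neumann problem $-\Delta\phi=q$ with $\phi\in H^1/\R$. The compatibility condition holds because $q$ has zero mean. Set $\vb:=-\grad\phi$; then $\vb\in\Hzdiv$, $\div\vb=q$, and $\|\vb\|_{\Hdiv}\le C\|q\|$ by the Poincar\'e inequality. Hence $\b(q,\vb)=\|q\|^2\ge C^{-1}\|q\|\,\|\vb\|_{\Hdiv}$.

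For \eqref{eq:infsupbstokesdisc}, we need a Fortin-type argument. Given $q_h\in S_{h,0}$, take $\vb=-\grad\phi$ as above. By Theorem~\ref{thm:reglap}, $\phi\in H^{1+s}$ with $s>1/2$, so $\vb\in\Hb^s$ with $s>1/2$. The canonical interpolant $\IRT\vb\in\Vb_{h,0}$ is therefore well defined and satisfies $\|\IRT\vb\|\le C\|\vb\|_{\Hb^s}\le C\|q_h\|$. The commuting diagram \eqref{eq:commuting} gives $\div\IRT\vb=\PLG q_h=q_h$. As an alternative, one could use the $\Lb^2$-stable commuting quasi-interpolation $\IRTav$ of Remark~\ref{rmk:quasiint} (boundary version), which avoids the regularity step altogether. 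A third route is to note that the discrete Hodge decomposition \eqref{def:hodgedecomdisc3} yields $\div\Vb_{h,0}=S_{h,0}$, and then invoke the discrete Poincar\'e inequality for $\gradhz$.

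For \eqref{eq:infsupastokescont}, fix $(\mub,\ub)\in\Sigmab\times\Zb$. We choose the same test pair as in Lemma~\ref{thm:bddinfsup}, namely $\taub=\mub-\delta\psib_\ub$ and $\vb=\ub+\curl\Pz\mub$. The key check is that $\vb\in\Zb$:
\begin{itemize}
\item $\ub\in\Zb$ by assumption;
\item $\curl\Pz\mub\in\curl\Hzcurl\subset\Hzdivz=\Zb$.
\end{itemize}
The estimates from that proof then carry over verbatim, and they simplify because $\div\ub=0$ and $\grad\phi_\ub=0$, so $\ub=\curl\psib_\ub$. The discrete case \eqref{eq:infsupastokesdisc} is parallel, following Lemma~\ref{thm:bddinfsupdisc}. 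We take $\vb_h=\ub_h+\curl\Pzh\mub_h$, where $\curl\Pzh\mub_h\in\curl\Sigmab_{h,0}\subset\Vb_{h,0}\cap\Hdivz=\Zb_h$. For $\ub_h\in\Zb_h$, the discrete Helmholtz decomposition \eqref{eq:helmdecompdisc2} has a vanishing gradient part, so $\ub_h=\curl\psib_h$. The norm equivalence of Lemma~\ref{thm:sigmanormequivdisc} is still stated with $\Vb_{h,0}'$, which is fine since the $\Sigmabh$-norm is unchanged.

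The main obstacle is in the discrete $\a$-estimate: the test direction $\taub_h=-\psib_h$ must come with a discrete Poincar\'e bound $\|\psib_h\|\le C\|\curl\psib_h\|$. This requires choosing $\psib_h\in\Xb_h$, that is, orthogonal to $\Sigmabh\cap\Hcurlz$, which is exactly what the discrete Poincar\'e inequality \cite[][Theorem~4.6]{arnold_2018} provides with an $h$-independent constant.
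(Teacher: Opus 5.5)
Your proposal is correct and follows the paper's proof. The paper obtains the $\b$-conditions from $\div\Hzdiv=\Ltwoz$, $\div\Vb_{h,0}=S_{h,0}$ and the (discrete) Poincar\'e inequality, which is your third route; it obtains the $\a$-conditions by rerunning Lemmas~\ref{thm:bddinfsup} and~\ref{thm:bddinfsupdisc} with test functions in $\Zb$ and $\Zb_h$, and your explicit check that $\curl\Pz\mub\in\Zb$ and $\curl\Pzh\mub_h\in\Zb_h$, which the paper leaves implicit, is what makes that rerun legitimate.
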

\begin{proof}
    The condition~\eqref{eq:infsupbstokescont} and~\eqref{eq:infsupbstokesdisc} result from $\mathrm{div}\Hzdiv = \Ltwoz,\,\mathrm{div}\Vb_{h,0} = S_{h,0}$ and the (discrete) \Poincare inequalities. The condition~\eqref{eq:infsupastokescont} and~\eqref{eq:infsupastokesdisc} differ from~\eqref{eq:infsupa} and~\eqref{eq:infsupadisc} with $\Hzdiv$ replaced by $\Zb$ and $\Vb_{h,0}$ replaced by $\Zb_h$, respectively. Yet the very same proof applies, noting that $\|\vb\|_{\Hdiv} = \|\vb\|$ for $\vb \in \Zb$.
\end{proof}
\noindent Obviously, the bilinear form $\b$ is bounded with respect to $L^2\times\Hdiv$. Recall the boundedness of the bilinear form $\a$ (see Lemma~\ref{thm:bddinfsup} and~\ref{thm:bddinfsupdisc}). Together with Lemma~\ref{thm:infsupvvpstokes}, the Babu\v ska-Brezzi theorem for saddle-point problems (see, e.g.,~\cite[][Theorem~49.13]{ern_2021b}) implies the well-posedness of~\eqref{eq:vvpcont}\eqref{eq:vvpdisc}.
\begin{theorem}[well-posedness of the VVP Stokes]
    The problem~\eqref{eq:vvpcont} and~\eqref{eq:vvpdisc} admit unique solutions $(\mub,\ub,p) \in \Sigmab \times \Hzdiv \times \Ltwoz$ and $(\mub_h, \ub_h, p_h) \in \Sigmabh \times  \Vb_{h,0} \times S_{h,0}$, respectively. There exists a positive constant $C$ independent of $\fb \in \Hzdiv'$ and $g \in \Ltwoz$ such that
    \begin{equation}
        \|(\mub,\ub,p)\|_{\Sigmab \times \Hdiv \times \Ltwo}  + \|(\mub_h, \ub_h, p_h)\|_{\Sigmabh \times \Hdiv \times \Ltwo} \leq C(\|\fb\|_{\Hzdiv'} + \|g\|).
    \end{equation}
\end{theorem}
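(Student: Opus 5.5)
The plan is to apply the abstract Babu\v{s}ka--Brezzi theory for saddle-point problems (e.g.\ \cite[][Theorem~49.13]{ern_2021b}) to the product space $\Sigmab\times\Hzdiv$ with bilinear form $\a$ and constraint form $\b$. We do this once on the continuous level, with norm $\|\cdot\|_{\Sigmab\times\Hdiv}$, and once on the discrete level, with norm $\|\cdot\|_{\Sigmabh\times\Hdiv}$. The hypotheses to verify are: boundedness of $\a$ and $\b$; the inf-sup condition of $\a$ restricted to the kernel; and the inf-sup condition of $\b$. Boundedness of $\a$ is \eqref{eq:bddacont} and \eqref{eq:bddadisc}. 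Boundedness of $\b$ is $|\b(q,\vb)|\le\|q\|\|\div\vb\|$. Both inf-sup conditions are Lemma~\ref{thm:infsupvvpstokes}. The form $\b$ acts only on the $\vb$-component, so we view it as a form on $\Ltwoz\times(\Sigmab\times\Hzdiv)$ with $\b(q,(\taub,\vb)):=(q,\div\vb)$. Its kernel is then exactly $\Sigmab\times\Zb$ (resp.\ $\Sigmabh\times\Zb_h$). Likewise, the inf-sup \eqref{eq:infsupbstokescont} carries over to the product space by taking $\taub=\zerob$.

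The one point needing care is that the Babu\v{s}ka--Brezzi theorem for nonsymmetric $\a$ requires the kernel inf-sup in both arguments: the sup over trial and over test functions. Alternatively, it requires that the induced operator on the kernel be an isomorphism. I would handle this as follows. Because $\Sigmab\times\Zb$ is finite dimensional in the discrete case, injectivity of the adjoint follows from \eqref{eq:infsupastokesdisc} by a dimension count. In the continuous case, I would check the transposed condition directly. Note that $\a((\mub,\ub),(\taub,\vb))=\a((\taub,-\vb),(\mub,-\ub))$ up to the sign pattern, since $\a$ is of the form symmetric part plus skew part. Hence the same test-function construction as in Lemma~\ref{thm:bddinfsup}, applied with $\ub\mapsto-\ub$, yields the transposed inf-sup. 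It is also worth noting that the constructed test functions stay in the kernel: $\curl\Pz\mub\in\Zb$ and $\ub\in\Zb$.

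With these in place, the theorem gives existence and uniqueness, along with the bound $\|(\mub,\ub)\|+\|p\|\le C(\|\fb\|_{\Hzdiv'}+\|g\|)$. The constant $C$ depends only on $\beta_\a,\beta_\b$ and the continuity constants. Since $\beta_\a',\beta_\b'$ are independent of $h$, the discrete constant is uniform in $h$.

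The right-hand side needs two checks. For the continuous problem, $g\in\Ltwoz$ and $\langle\fb,\vb\rangle$ is bounded by $\|\fb\|_{\Hzdiv'}\|\vb\|_{\Hdiv}$. For the discrete problem, the functional $q_h\mapsto(g,q_h)$ on $S_{h,0}$ has norm at most $\|g\|$, and $\vb_h\mapsto\langle\fb,\vb_h\rangle$ has $\Vb_{h,0}'$-norm at most $\|\fb\|_{\Hzdiv'}$. I expect the main (mild) obstacle to be the transposed kernel inf-sup in the continuous setting. If the symmetry trick does not go through cleanly, I would instead lift the data: choose $\ub_g\in\Hzdiv$ with $\div\ub_g=g$ and $\|\ub_g\|_{\Hdiv}\le C\|g\|$, and solve the kernel problem. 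Uniqueness on the kernel then follows from \eqref{eq:infsupastokescont} applied to the difference of two solutions, combined with the Lax--Milgram-type argument already used for Theorem~\ref{thm:well-posed}. The pressure is afterwards recovered from \eqref{eq:infsupbstokescont}.
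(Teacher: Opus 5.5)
Your proposal is correct and follows the paper's route: boundedness of $\a$ and $\b$, the inf-sup conditions of Lemma~\ref{thm:infsupvvpstokes}, and the Babu\v{s}ka--Brezzi theorem \cite[][Theorem~49.13]{ern_2021b}, with $h$-independent constants giving the uniform discrete bound. Your extra check of the transposed kernel condition fills in a point the paper leaves implicit, and it is cleaner than you indicate: the identity $\a((\mub,\ub),(\taub,\vb))=\a((\taub,-\vb),(\mub,-\ub))$ holds exactly, and $\Sigmab\times\Zb$ (resp.\ $\Sigmabh\times\Zb_h$) is invariant under $\vb\mapsto-\vb$, so the transposed inf-sup follows immediately with the same constant.
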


Next, we derive the \emph{a priori} error estimates.
\begin{theorem}[error estimates of $\ub$ and $p$]\label{thm:hdiverreststokes}
    Let $(\mub,\ub, p)\in \Sigmab\times\Hzdiv\times\Ltwoz$ solve~\eqref{eq:stokes} and $(\mub_h, \ub_h, p_h) \in \Sigmabh \times  \Vb_{h,0} \times S_{h,0}$ solve~\eqref{eq:vvpdisc}. If $\ub \in \Wb^{k,\infty}\cap \Hb^{k+1}$ and $p \in \Hb^k$, there exists a positive constant $C$ such that
    \begin{align}
        \|\ub - \ub_h\| &\leq C h^{k - 1/2}\left(|\ub|_{\Wb^{k,\infty}} + \|\ub\|_{\Hb^{k+1}}\right), \label{eq:uerreststokes}\\
        \|p - p_h\| &\leq C h^{k - 1/2}\left(h^{1/2}|p|_{H^k} + |\ub|_{\Wb^{k,\infty}} + \|\ub\|_{\Hb^{k+1}}\right), \label{eq:perreststokes} \\
        \|\div(\ub-\ub_h)\|  &\leq C h^k|\div \ub|_{H^k}.\label{eq:diverreststokes}
    \end{align}
     Assuming $\Omega$ is convex in addition, it holds that
     \begin{equation}\label{eq:l2erreststokes}
        \|\ub - \ub_h\| \leq \begin{cases}
            C h^{k}\left(|\ub|_{\Wb^{k,\infty}} + \|\ub\|_{\Hb^{k+1}}\right), & k \geq 2,  \\
            C h^{5/6}\left(|\ub|_{\Wb^{1,\infty}} + \|\ub\|_{\Hb^{2}}\right), & k = 1. 
        \end{cases}.
    \end{equation}
\end{theorem}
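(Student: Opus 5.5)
The idea is to reduce the Stokes error analysis to the Hodge--Laplace analysis of Theorems~\ref{thm:eneerrest} and~\ref{thm:l2errest} by working on the discretely divergence-free subspace $\Zb_h$.

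\textbf{Step 1: divergence estimate.} Since $\div\Vb_{h,0}=S_{h,0}$ and $g\in\Ltwoz$, the third equation of~\eqref{eq:vvpdisc} gives $\div\ub_h=\PLG g=\PLG\div\ub$. The commuting diagram~\eqref{eq:commuting} gives $\div\IRT\ub=\PLG\div\ub$. Hence $\div\ub_h=\div\ub_I$ with $\ub_I:=\IRT\ub$, and \eqref{eq:diverreststokes} follows from~\eqref{eq:canonicalapp}. In particular $\ub_I-\ub_h\in\Zb_h$.

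\textbf{Step 2: velocity and vorticity.} I repeat the argument of Theorem~\ref{thm:eneerrest} with test functions restricted to $\Sigmabh\times\Zb_h$, using the discrete inf-sup condition~\eqref{eq:infsupastokesdisc}. On $\Zb_h$, the pressure term satisfies $\b(p_h,\vb_h)=0$. The continuous pressure term is $\b(p,\vb_h)=(p-\PLG p,\div\vb_h)$, and this vanishes because $\div\vb_h\in S_h$ is zero for $\vb_h\in\Zb_h$. (Here $\div\vb_h$ is orthogonal to $S_{h,0}$ and has zero mean, so it is $0$.) Thus Galerkin orthogonality holds exactly on $\Sigmabh\times\Zb_h$, and
\[
\|\ub_I-\ub_h\|_{\Hdiv}+\|\mub_\Pi-\mub_h\|_{\Sigmabh}\le C\sup_{(\taub_h,\vb_h)\in\Sigmabh\times\Zb_h}\frac{\a((\mub_\Pi-\mub,\ub_I-\ub),(\taub_h,\vb_h))}{\|\taub_h\|_{\Sigmabh}+\|\vb_h\|_{\Hdiv}}.
\]
The numerator splits into $I_1$ and $I_2$ exactly as in~\eqref{eq:thmerr-1}. $I_2$ is bounded by~\eqref{eq:thmerr-2}, now with $\mub=\curl\ub\in\Xb$ still valid. $I_1$ is bounded through the splitting via $\Pzh$, Lemma~\ref{thm:lclest} and the dyadic-annuli argument~\eqref{eq:thmerr-4}, taking $p=\infty$, $\pstar=1$. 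None of these steps used that $\ub$ solves the vector Laplacian; they only used $\mub=\curl\ub$ and the regularity of $\ub$. This yields~\eqref{eq:uerreststokes} after the triangle inequality. The main point to verify is that $\mub\in\Xb$ still holds, i.e.\ $\curl\ub\perp\Hcurlz$. This follows from $\ub\in\Hb^1_0$ by integration by parts, as before.

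\textbf{Step 3: pressure.} I use the discrete inf-sup condition~\eqref{eq:infsupbstokesdisc} applied to $\PLG p-p_h\in S_{h,0}$. Subtracting the discrete equation from the continuous one gives, for $\vb_h\in\Vb_{h,0}$,
\[
\b(\PLG p-p_h,\vb_h)=\b(\PLG p-p,\vb_h)+\b(p-p_h,\vb_h)=-\a((\mub-\mub_h,\ub-\ub_h),(\zerob,\vb_h)),
\]
where the first term drops because $\div\vb_h\in S_h$. The right-hand side equals $(\curl(\mub-\mub_h),\vb_h)+(\div(\ub-\ub_h),\div\vb_h)$. I split $\mub-\mub_h=(\mub-\mub_\Pi)+(\mub_\Pi-\mub_h)$. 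The first part is handled by~\eqref{eq:superconv}, giving order $h^{k-1/2}$. For the second part, I write
\[
(\curl(\mub_\Pi-\mub_h),\vb_h)=(\curlhz\vb_h,\mub_\Pi-\mub_h)+\text{(boundary part)},
\]
and it is cleaner to use the $\Sigmabh$-norm: by definition $(\curl\taub_h,\vb_h)\le\|\taub_h\|_{\Sigmabh}\|\vb_h\|_{\Hdiv}$. Step 2 supplies $\|\mub_\Pi-\mub_h\|_{\Sigmabh}\lesssim h^{k-1/2}$. Together with $\|p-\PLG p\|\le Ch^k|p|_{H^k}$, this gives~\eqref{eq:perreststokes}.

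\textbf{Step 4: convex case.} I follow Theorem~\ref{thm:l2errest}, but the dual problem is now a Stokes problem: $-\vecDelta\wb+\grad r=\ub-\ub_h$, $\div\wb=0$, with $\|\wb\|_{\Hb^2}+\|r\|_{H^1}\le C\|\ub-\ub_h\|$ on convex domains. The extra terms are $(r,\div(\ub-\ub_h))$ and $(p-p_h,\div(\wb-\wb_I))=0$. The first is $(r-\PLG r,\div(\ub-\ub_h))\le Ch\|r\|_{H^1}h^k$ by Step 1. The remaining terms are estimated verbatim, including the $q=6/5$ treatment for $k=1$.

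I expect the main obstacle to be Step 2: confirming that restricting tests to $\Zb_h$ leaves the $\Pzh$ decomposition and Lemma~\ref{thm:lclest} intact. This should be fine, because only $\taub_h$ enters that argument, not $\vb_h$.
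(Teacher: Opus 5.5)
\textbf{Verdict.} Your proposal is correct. Steps 1--3 follow the paper's proof essentially verbatim; Step 4 reaches the convex-domain $\Lb^2$ bound by a genuinely different duality argument.

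\textbf{Steps 1--3.} These match the paper: $\div\ub_h=\PLG\div\ub=\div\ub_I$, the energy estimate from the inf-sup condition on $\Sigmabh\times\Zb_h$ with the unchanged analysis of Theorem~\ref{thm:eneerrest}, and the pressure bound from the discrete $\b$-inf-sup condition applied to $\PLG p-p_h$. Splitting $\curl(\mub-\mub_h)$ through $\mub_\Pi$ is just a more explicit form of the paper's bound by $\|\mub-\mub_h\|_{\Sigmabh}$. The sign in your identity for $\b(\PLG p-p_h,\vb_h)$ should be $+$, which is immaterial.

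\textbf{The paper's convex-case argument.} The paper never tests the dual problem with a non-solenoidal field. It solves the auxiliary Neumann problem $-\Delta\varphi=\div(\ub-\ub_h)$, so that $\ub-\ub_h+\grad\varphi\in\Zb$. It then works only with the pressure-free dual formulation on $\Sigmab\times\Zb$. The correction is paid for by a Young inequality on the left-hand side and by the extra term $\a((\zerob,\grad\varphi),(\lambdab,\wb))\lesssim h^k\|\ub-\ub_h\|$.

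\textbf{Your convex-case argument.} You keep the full saddle-point dual with its pressure $r$ and test directly with $\ub-\ub_h\in\Hzdiv$. You note that $\b(p-p_h,\wb_I)=0$ because $\div\wb_I=\PLG\div\wb=0$. The resulting consistency term is removed by the discrete orthogonality from Step 1:
$(r,\div(\ub-\ub_h))=(r-\PLG r,\div\ub-\PLG\div\ub)\lesssim h^{k+1}\|r\|_{H^1}|\div\ub|_{H^k}$.

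\textbf{Trade-offs.} Your route avoids the auxiliary Neumann problem and the Young step, and gains a power of $h$ on the divergence-consistency term. The cost is that it uses the $H^1$ bound on the dual pressure explicitly. The convex-domain Stokes regularity result the paper invokes supplies that bound anyway. The paper's route uses only the velocity regularity $\|\wb\|_{\Hb^2}+\|\lambdab\|_{\Hb^1}$ and stays within the pressure-eliminated formulation. The remaining terms are handled identically in both. The divergence term of Theorem~\ref{thm:l2errest} now vanishes, since $\div\wb=\div\wb_I=0$.
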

\begin{proof}
    We first reduce the full problem to the Hodge Laplacian by eliminating the pressure.
    Recast~\eqref{eq:vvpcontab} and~\eqref{eq:vvpdisc} into
    \begin{align}
        &\text{seeking }(\mub,\ub) \in \Sigmab\times\Zb^g :\quad\quad\quad\a((\mub,\ub),(\taub,\vb)) = \langle\fb,\vb\rangle \quad &&\forall (\taub, \vb)\in\Sigmab\times\Zb,\label{eq:vvpcontrd}\\
        \text{and }&\text{seeking }(\mub_h,\ub_h) \in \Sigmabh\times\Zb_h^g :\quad\a((\mub_h,\ub_h),(\taub_h,\vb_h)) = \langle\fb,\vb_h\rangle \quad &&\forall (\taub_h, \vb_h)\in\Sigmabh\times\Zb_h.\label{eq:vvpdiscrd}
    \end{align}
    Note the conformity $\Sigmabh \subset \Sigmab, \Zb_h \subset \Zb$.
    Denote $\ub_I := \IRT \ub \in \Vb_{h,0}$ and $\mub_\Pi := \PND \mub \in \Sigmabh$ (recall the definitions of $\IRT$ and $\PND$ in Section~\ref{sec:interapprox}). Due to the commutative property $\div\IRT \ub = \PLG \div \ub$ where $\PLG:\Ltwoz\rightarrow S_{h,0}$ is the $L^2$-projection, we have $\ub_I \in \Zb^g_h$, and thus $\ub_I - \ub_h \in \Zb_h$.
    By the inf-sup condition~\eqref{eq:infsupastokesdisc} and~\eqref{eq:vvpcontrd}\eqref{eq:vvpdiscrd}, we deduce that
    \begin{equation}\label{eq:thmerreststokes-5}
        \begin{aligned}
            \|\ub_I - \ub_h\|_{\Hdiv} + \|\mub_\Pi - \mub_h\|_{\Sigmabh} &\leq \beta^{'-1}_\a \sup_{0\neq (\taub_h, \vb_h)\in\Sigmabh\times\Zb_h} \frac{\a((\mub_\Pi - \mub_h,\ub_I - \ub_h),(\taub_h,\vb_h))}{\|\taub_h\|_{\Sigmabh} + \|\vb_h\|_{\Hdiv}} \\
            &=\beta^{'-1}_\a \sup_{0\neq (\taub_h, \vb_h)\in\Sigmabh\times\Zb_h}    \frac{\a((\mub_\Pi - \mub,\ub_I - \ub),(\taub_h,\vb_h))}{\|\taub_h\|_{\Sigmabh} + \|\vb_h\|_{\Hdiv}}.
        \end{aligned}
    \end{equation}
    The same argument in the proof of Theorem~\ref{thm:eneerrest} applies seamlessly, and we obtain
    \begin{equation}\label{eq:eneerreststokes}
        \|\ub - \ub_h\|_{\Hdiv} + \|\mub - \mub_h\|_{\Sigmabh} + h\|\curl(\mub - \mub_h)\| \leq C h^{k - 1/2}\left(|\ub|_{\Wb^{k,\infty}} + \|\ub\|_{\Hb^{k+1}}\right)
    \end{equation}
    which settles the term $\|\ub - \ub_h\|$ in~\eqref{eq:uerreststokes}.
    Since $(\div \ub_h, q_h) = (\div \ub, q_h)\;\forall\,q_h\in S_{h,0}$ and $\div \Vb_{h,0} = S_{h,0}$, it is easily seen that $\div \ub_h$ is the $L^2$-projection of $\div \ub$ onto $S_{h,0}$, which leads to~\eqref{eq:diverreststokes}.

    To derive the error bound for $p$, we deduce by subtracting~\eqref{eq:vvpdiscab} from~\eqref{eq:vvpcontab} that
    \begin{equation}\label{eq:thmerreststokes-4}
        \b(p - p_h, \vb_h) = \a((\mub - \mub_h, \ub - \ub_h), (\taub_h, \vb_h)) \quad \forall\, (\taub_h,\vb_h)\in \Sigmabh\times\Vb_{h,0}.
    \end{equation}
    Using~\eqref{eq:infsupbstokesdisc}\eqref{eq:thmerreststokes-4} and letting $p_I := \PLG p$, we have
    \begin{equation}
        \begin{aligned}
            \|p_I - p_h\| &\leq \beta'^{-1}_\b \sup_{\zerob\neq \vb_h\in \Vb_{h,0}}\frac{\b(p_I -p_h, \vb_h)}{\|\vb_h\|_{\Hdiv}} \\
            &= \beta'^{-1}_\b \sup_{\zerob\neq \vb_h\in \Vb_{h,0}}\left(\frac{\b(p_I -p, \vb_h)}{\|\vb_h\|_{\Hdiv}} + \frac{\b(p -p_h, \vb_h)}{\|\vb_h\|_{\Hdiv}}\right) \\
            &= \beta'^{-1}_\b \sup_{\zerob\neq \vb_h\in \Vb_{h,0}}\left(\frac{\b(p_I -p, \vb_h)}{\|\vb_h\|_{\Hdiv}} + \frac{\a((\mub - \mub_h, \ub - \ub_h), (\zerob, \vb_h))}{\|\vb_h\|_{\Hdiv}}\right) \\
            &= \beta'^{-1}_\b \sup_{\zerob\neq \vb_h\in \Vb_{h,0}}\left(\frac{\b(p_I -p, \vb_h)}{\|\vb_h\|_{\Hdiv}} + \frac{(\div (\ub - \ub_h),\div \vb_h) + \langle\curl (\mub- \mub_h),\vb_h\rangle}{\|\vb_h\|_{\Hdiv}}\right) \\
            &\leq C \left(\|p_I - p\| + \|\div(\ub - \ub_h)\| + \|\mub - \mub_h\|_{\Sigmabh} \right).
        \end{aligned}
    \end{equation}
    Combining with~\eqref{eq:diverreststokes}\eqref{eq:thmerreststokes-5} leads to the error bound for $p$ in~\eqref{eq:perreststokes}.

    If $\Omega$ is convex, the error $\|\ub - \ub_h\|$ can be improved using the duality technique, as is done in the proof of Theorem~\ref{thm:l2errest}. Construct the dual problem 
    \begin{equation}\label{eq:stokesdual}
        \begin{aligned}
        - \vecDelta \wb +\grad \eta &= \ub-\ub_h &&\text{ in }\Omega, \\
        \div \wb &= 0 &&\text{ in }\Omega,\\
        \wb &= \zerob &&\text{ on }\partial\Omega.
    \end{aligned}
    \end{equation}
    Denote $\lambdab:=-\curl\wb$. Since $\Omega$ is assumed to be convex, we have $\wb \in \Hb^2, \eta \in H^1$~\cite{dauge_1989} and
    \begin{equation}\label{eq:thmerreststokes-3}
        \|\wb\|_{\Hb^2} + \|\lambdab\|_{\Hb^1} + \|\eta\|_{H^1} \leq C\|\ub-\ub_h\|.
    \end{equation}
    The dual problem~\eqref{eq:stokesdual} can be rewritten as
    \begin{equation}\label{eq:stokesdualrd}
        \a((\taub,\vb), (\lambdab, \wb)) = (\ub - \ub_h, \vb) \quad\forall\, (\taub,\vb)\in \Sigmab\times\Zb.
    \end{equation}
    Let $\varphi \in H^1/\R$ solve $-\Delta \varphi = \div (\ub - \ub_h) \in \Ltwoz$. By Theorem~\ref{thm:reglap} and~\eqref{eq:diverreststokes} we have 
    \begin{equation}\label{eq:thmerreststokes-2}
        \|\varphi\|_{H^2} \leq Ch^k|\div \ub|_{H^k}. 
    \end{equation}
    Note that $(\ub - \ub_h + \grad\varphi) \in \Zb$.
    Inserting $\taub = \mub - \mub_h, \vb = \ub - \ub_h + \grad\varphi$ into~\eqref{eq:stokesdualrd} and using~\eqref{eq:vvpcontrd}\eqref{eq:vvpdiscrd} yields
    \begin{equation}\label{eq:thmerreststokes-1}
        \begin{aligned}
            (\ub - \ub_h, \ub - \ub_h + \grad\varphi) &= \a((\mub-\mub_h, \ub - \ub_h + \grad\varphi), (\lambdab, \wb)) \\
            &= \a((\mub-\mub_h, \ub - \ub_h), (\lambdab, \wb)) + \a((\zerob, \grad\varphi), (\lambdab, \wb)) \\
            &= \underbrace{\a((\mub-\mub_h, \ub - \ub_h),(\lambdab - \lambdab_\Pi, \wb - \wb_I))}_{I_1} + \underbrace{\a((\zerob, \grad\varphi), (\lambdab, \wb))}_{I_2}
        \end{aligned}
    \end{equation}
    where $\lambdab_\Pi := \PND \lambdab \in \Sigmabh$ and $\wb_I := \IRT \wb \in \Zb_h$ since $\div\wb = 0$. The left-hand side of~\eqref{eq:thmerreststokes-1} satisfies that
    \begin{equation}\label{eq:thmerreststokes-8}
        (\ub - \ub_h, \ub - \ub_h + \grad\varphi) \geq \frac{1}{2}\|\ub - \ub_h\|^2 - \frac{1}{2}\|\grad \varphi\|^2.
    \end{equation}
    The term $I_1$ is estimated in the same way as in the proof of Theorem~\ref{thm:l2errest} (cf.~\eqref{eq:thml2est-1}). That is, 
    \begin{equation}\label{eq:thmerreststokes-6}
        I_1 \leq \begin{cases}
            C h^{k}\left(|\ub|_{\Wb^{k,\infty}} + \|\ub\|_{\Hb^{k+1}}\right)\|\ub - \ub_h\|, & k \geq 2,  \\
            C h^{5/6}\left(|\ub|_{\Wb^{1,\infty}} + \|\ub\|_{\Hb^{2}}\right)\|\ub - \ub_h\|, & k = 1. 
    \end{cases}.
    \end{equation}
    By~\eqref{eq:bddacont}\eqref{eq:thmerreststokes-3}\eqref{eq:thmerreststokes-2}, we bound $I_2$ as
    \begin{equation}\label{eq:thmerreststokes-7}
        I_2 \leq C(\|\lambdab\|_{\Sigmab} + \|\wb\|_{\Hdiv})\|\grad \varphi\|_{\Hdiv} \leq C\|\ub-\ub_h\| h^k|\div\ub|_{H^k}.
    \end{equation}
    Combine the estimates~\eqref{eq:stokesdualrd}\eqref{eq:thmerreststokes-8}\eqref{eq:thmerreststokes-6}\eqref{eq:thmerreststokes-7}, and we can conclude.
\end{proof}
\begin{remark}[optimal rate in $\Hdiv$-error]
    Comparing Theorem~\ref{thm:l2errest} and Theorem~\ref{thm:hdiverreststokes} when $\Omega$ is convex, we remark that the convergence rate of the $\Hdiv$-error is (almost) optimal for the Stokes problem, whereas it is $h^{1/2}$-suboptimal in $\Hdiv$-seminorm for the Hodge Laplace BVP. 
\end{remark}

\section{Numerical experiments}\label{sec:num}
In this section, we conduct numerical tests\footnote{The implementation is conducted using \href{https://ngsolve.org/index.html}{\texttt{NGSolve}}.} to verify the established convergence rates. We mention that some numerical results have also been presented in~\cite{dubois_2003b, arnold_2012}.
\subsubsection*{Case I: the Laplace BVP on a non-convex domain with void}
We consider $\Omega = (0,1)^3 \setminus [1/3,2/3]^3$ and the manufactured solution 
\begin{equation}
    \ub_{\text{exact}}(x,y,z) = 
        \begin{bmatrix}
        \sin(3\pi x)\sin(6\pi y)\sin(3\pi z) \\
        \sin(6\pi x)\sin(3\pi y)\sin(6\pi z) \\
        xyz(1/3-x)(1/3-y)(1/3-z)(2/3-x)(2/3-y)(2/3-z)(1-x)(1-y)(1-z)
        \end{bmatrix}.
\end{equation}
The convergence plot of the scheme~\eqref{eq:hodgelapdisc} is shown in Figure~\ref{fig:conv1}.
\begin{figure}
    \centering
    \includegraphics[scale=0.4, trim=0pt 0pt 430pt 0pt, clip]{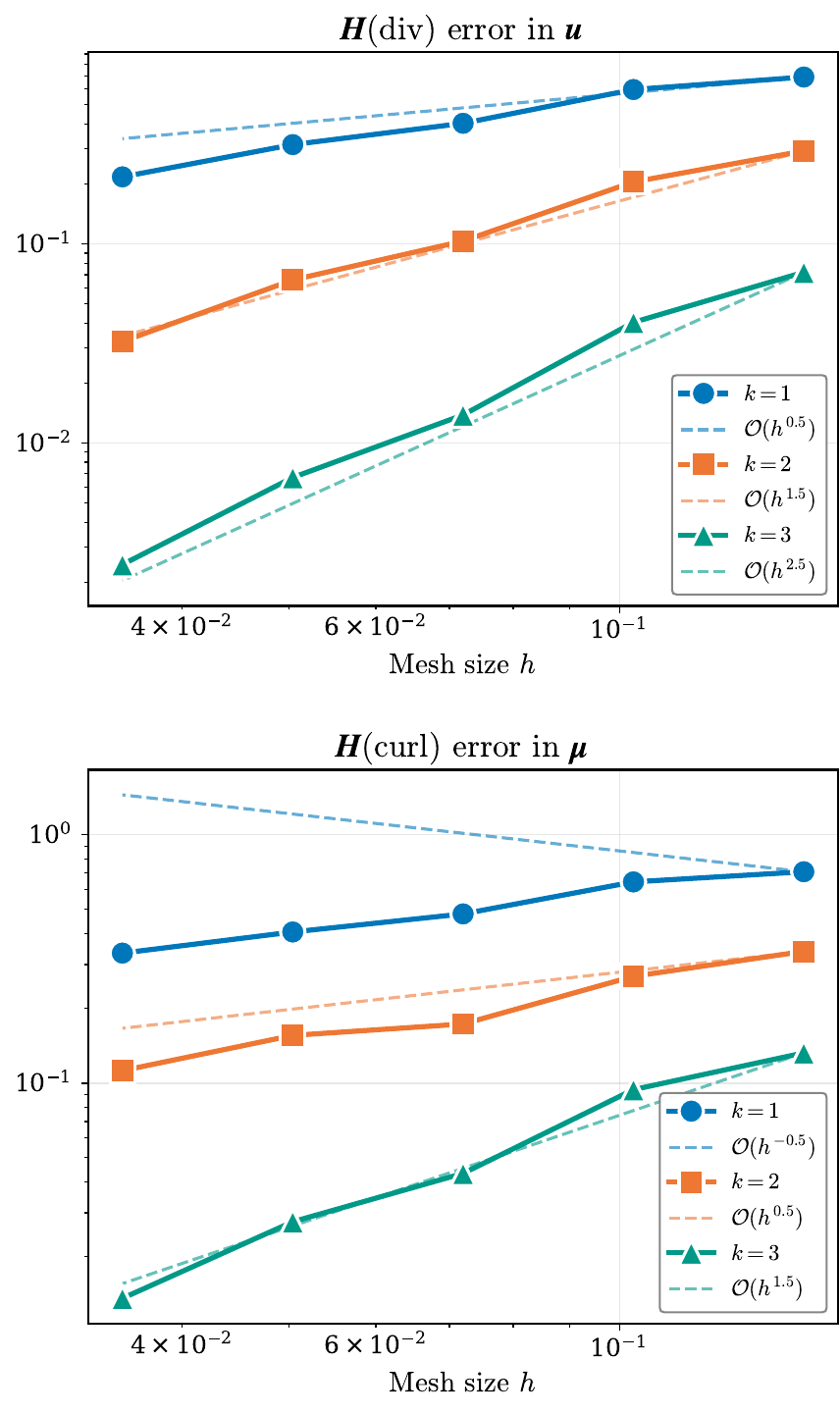}
    \hspace{1cm}
    \includegraphics[scale=0.4, trim=430pt 0pt 0pt 0pt, clip]{convergence_plot_nonconvex.pdf}
    \caption{Case I: Convergence study of the scheme~\eqref{eq:hodgelapdisc} on a domain with void}
    \label{fig:conv1}
\end{figure}

\subsubsection*{Case II: the Laplace BVP on a convex domain}
We consider $\Omega = (0,1)^3$ and the manufactured solution 
\begin{equation}
    \ub_{\text{exact}}(x,y,z) = 
        \begin{bmatrix}
        \sin(2\pi x)\sin(2\pi y)\sin(2\pi z) \\
        \sin(\pi x)\sin(\pi y)\sin(\pi z) \\
        x(1-x)y(1-y)z(1-z)
        \end{bmatrix}.
\end{equation}
The convergence plot of the scheme~\eqref{eq:hodgelapdisc} is shown in Figure~\ref{fig:conv2}.
\begin{figure}
    \centering
    \includegraphics[scale=0.4, trim=0pt 0pt 430pt 0pt, clip]{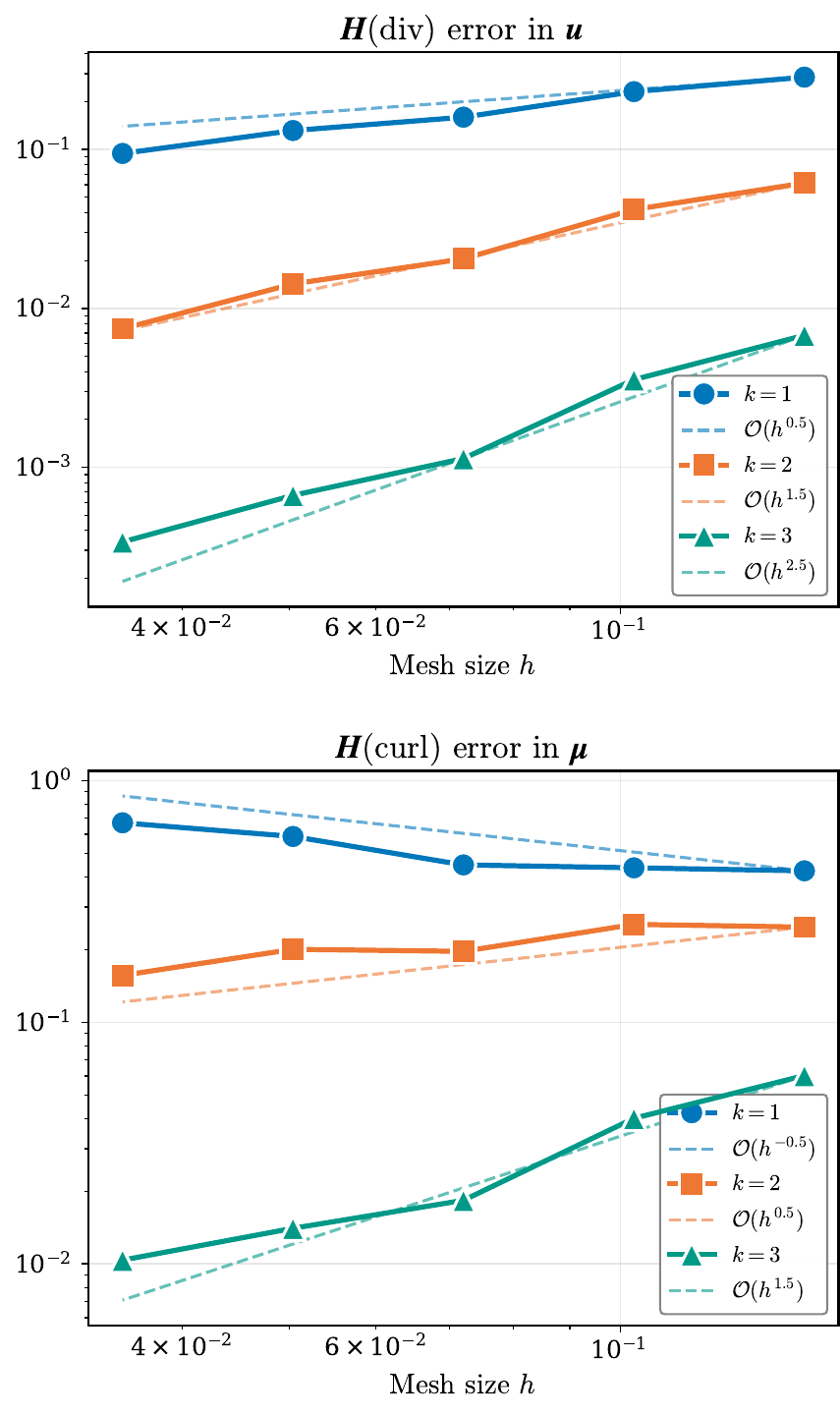}
    \hspace{1cm}
    \includegraphics[scale=0.4, trim=430pt 0pt 0pt 0pt, clip]{convergence_plot_convex.pdf}
    \caption{Case II: Convergence study of the scheme~\eqref{eq:hodgelapdisc} on a convex domain}
    \label{fig:conv2}
\end{figure}

\subsubsection*{Case III: the Stokes problem on a convex domain}
We consider $\Omega = (0,1)^3$ and the manufactured solution 
\begin{equation}
    \ub_{\text{exact}}(x,y,z) = 
        \begin{bmatrix}
        \sin(2\pi x)\sin(2\pi y)\sin(2\pi z) \\
        \sin(\pi x)\sin(\pi y)\sin(\pi z) \\
        x(1-x)y(1-y)z(1-z)
        \end{bmatrix},\quad\quad
    p_\text{exact}(x,y,z) = x^2\sin(2\pi y)\cos(4\pi z).
\end{equation}
The convergence plot of the scheme~\eqref{eq:vvpdisc} is shown in Figure~\ref{fig:conv3}.
\begin{figure}
    \centering
    \includegraphics[scale=0.4]{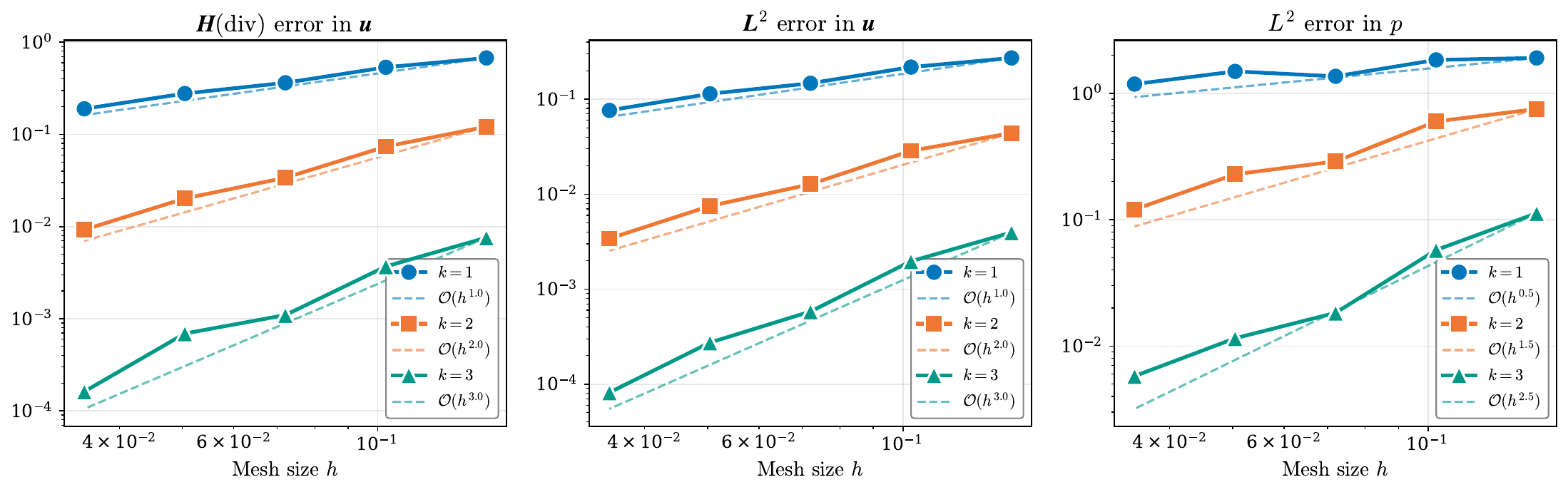}
    \caption{Case III: Convergence study of the scheme~\eqref{eq:vvpdisc} on a convex domain}
    \label{fig:conv3}
\end{figure}

\section{Concluding remarks}
We have analyzed the mixed finite element discretization~\eqref{eq:hodgelapdisc} of the vector Dirichlet Laplace boundary value problem~\eqref{eq:veclap} in three dimensions. Suitable inf-sup stability is established, and a priori error bounds are derived. Our results extend the two-dimensional analysis in~\cite{arnold_2012} to three-dimensional domains with nontrivial topology, requiring several analytical techniques different from those in~\cite{arnold_2012}. As a direct application, we also analyze the discretization of the Stokes equations in the VVP formulation~\eqref{eq:vvpdisc}. Numerical results agree well with the predicted convergence rates.

We would like to point out some potential gaps in the current theory. First, Figure~\ref{fig:conv1} indicates that the lowest-order elements might enjoy superconvergence; specifically, the $\Hdiv$-error of $\ub_h$ converges empirically at a rate of $O(h)$ even on a non-convex domain. Second, the $O(h^{5/6})$ convergence rate for the $\Lb^2$-error of $\ub_h$ (see Theorem~\ref{thm:l2errest}) on convex domains may not be sharp and could be an artifact of the analysis. We conjecture that a more refined technique could lift it to $O(h)$.

Regarding future research directions, the $\Hcurl$-version of the mixed approach can be treated similarly, although its application in the Stokes problem is currently less straightforward (see Remark~\ref{rmk:vvp}). Other aspects include spectral approximation ($p$-version), efficient preconditioning strategies, and applications in singularly-perturbed Darcy-Stokes problems.

\bibliography{ref_hdiv.bib}
\end{document}

%% file: fig_partition.tex

\begin{tikzpicture}[
    thick,
    line join=miter 
]
    \coordinate (C) at (0, 0);

    \coordinate (P1) at (0, 3);
    \coordinate (P2) at (3.5, 1.5);
    \coordinate (P3) at (2.5, -2.5);
    \coordinate (P4) at (-2, -3);
    \coordinate (P5) at (-4, 0);

    \def\drawlayer#1#2#3#4{
        \draw[fill=#2, draw=#3, #4]
            ($(C)!#1!(P1)$) -- 
            ($(C)!#1!(P2)$) -- 
            ($(C)!#1!(P3)$) -- 
            ($(C)!#1!(P4)$) -- 
            ($(C)!#1!(P5)$) -- cycle;
    }


    \drawlayer{1.0}{gray!20}{gray!60!black}{ultra thick}

    \drawlayer{0.90}{green!30!white}{green!40!black}{thick}

    \drawlayer{0.70}{cyan!30}{cyan!60!black}{thick}

    \drawlayer{0.30}{white}{cyan!60!black}{thick}

    
    \coordinate (ZoomTarget) at ($(P1)!0.4!(P2)$);
    
    \coordinate (ZoomCenter) at ($(C)!0.95!(ZoomTarget)$);
    
    \coordinate (MagCenter) at (7, 1);
    
    \draw[thick, red!80!white] (ZoomCenter) [xshift=-0.5cm, yshift=0.25cm] circle (0.25);
    
    \path (ZoomCenter) ++(-10:0.25) coordinate[xshift=-0.5cm, yshift=0.25cm] (ArrowStart);
    \path (MagCenter) ++(150:2.5) coordinate (ArrowEnd);
    \draw[->, thick, red!80!white, shorten >=2pt] (ArrowStart) to[out=-10, in=150] (ArrowEnd);

    \coordinate (labelCenter) at ($(P5)!0.5!(P1)$);
    \node at (-1,2.7) [rotate=38.2] {$\partial\Omega$};
    \node at ($(C)!0.95!(labelCenter)$)[text=gray!50!black,rotate=38.2] {\small$A_0$};
    \node at ($(C)!0.8!(labelCenter)$) [text=green!30!black,rotate=38.2]{\small$A_1$};
    \node at ($(C)!0.5!(labelCenter)$) [text=cyan!30!black,rotate=38.2]{\small$A_2$};
    \node at ($(C)!0.2!(labelCenter)$) {\small$\boldsymbol{\ddots}$};
    \draw[<->, thick, red!80!white] ($(C)!0.9!(ZoomTarget)$) -- ($(C)!0.7!(ZoomTarget)$) node[midway, right, inner sep=2pt, scale=0.8,rotate=-23.2] {${d_1 = 2d_0}$};
    \draw[<->, thick, red!80!white] ($(C)!0.7!(ZoomTarget)$) -- ($(C)!0.3!(ZoomTarget)$) node[midway, right, inner sep=2pt, scale=0.8,rotate=-23.2] {${d_2 = 2d_1}$};
    
    \begin{scope}[shift={(7, 1)}]
        
        \clip (0,0) circle (2.5);
        
        \begin{scope}[rotate=-23.2]
            
            \fill[gray!20] (-4, -1.2) rectangle (4, 2.0);          
            \fill[green!30!white] (-4, -4) rectangle (4, -1.2);   
            
            \draw[thick, green!40!black] (-4, -1.2) -- (4, -1.2);
            
            \draw[ultra thick, gray!60!black] (-4, 2.0) -- (4, 2.0);
            
            \begin{scope}
                \clip (-4, -4) rectangle (4, 2.0);
                
                \pgfmathsetseed{42} 
                
                \foreach \i in {-8,...,8} {
                    \foreach \j in {0,...,10} {
                        \pgfmathsetmacro{\bx}{\i * 0.6}
                        \pgfmathsetmacro{\by}{2.0 - \j * 0.6}
                        
                        \ifnum\j=0
                            \pgfmathsetmacro{\px}{\bx + 0.15*rand}
                            \pgfmathsetmacro{\py}{\by}
                        \else
                            \pgfmathsetmacro{\px}{\bx + 0.15*rand}
                            \pgfmathsetmacro{\py}{\by + 0.15*rand}
                        \fi
                        \coordinate (N-\i-\j) at (\px, \py);
                        
                        \expandafter\xdef\csname Nx-\i-\j\endcsname{\px}
                        \expandafter\xdef\csname Ny-\i-\j\endcsname{\py}
                    }
                }
                
                \foreach \i [evaluate=\i as \nexti using int(\i+1)] in {-8,...,7} {
                    \foreach \j [evaluate=\j as \nextj using int(\j+1)] in {0,...,9} {
                        \pgfmathsetmacro{\xA}{\csname Nx-\i-\j\endcsname}
                        \pgfmathsetmacro{\yA}{\csname Ny-\i-\j\endcsname}
                        \pgfmathsetmacro{\xB}{\csname Nx-\nexti-\j\endcsname}
                        \pgfmathsetmacro{\yB}{\csname Ny-\nexti-\j\endcsname}
                        \pgfmathsetmacro{\xC}{\csname Nx-\nexti-\nextj\endcsname}
                        \pgfmathsetmacro{\yC}{\csname Ny-\nexti-\nextj\endcsname}
                        \pgfmathsetmacro{\xD}{\csname Nx-\i-\nextj\endcsname}
                        \pgfmathsetmacro{\yD}{\csname Ny-\i-\nextj\endcsname}
                        
                        \pgfmathsetmacro{\dOne}{(\xC - \xA)*(\xC - \xA) + (\yC - \yA)*(\yC - \yA)}
                        \pgfmathsetmacro{\dTwo}{(\xD - \xB)*(\xD - \xB) + (\yD - \yB)*(\yD - \yB)}
                        
                        \ifdim\dOne pt < \dTwo pt
                            \draw[very thin, black!50] (N-\i-\j) -- (N-\nexti-\j) -- (N-\nexti-\nextj) -- cycle;
                            \draw[very thin, black!50] (N-\i-\j) -- (N-\i-\nextj) -- (N-\nexti-\nextj) -- cycle;
                        \else
                            \draw[very thin, black!50] (N-\i-\j) -- (N-\nexti-\j) -- (N-\i-\nextj) -- cycle;
                            \draw[very thin, black!50] (N-\nexti-\nextj) -- (N-\nexti-\j) -- (N-\i-\nextj) -- cycle;
                        \fi
                    }
                }
            \end{scope}
            \draw[<->, ultra thick, red!80!white] (1.2, -1.2) -- (1.2, 2.0) node[midway, fill=yellow!20, inner sep=2pt, rotate=-23.2] {$d_0 = ch$};
            
        \end{scope}
    \end{scope}
    
    \draw[thick, red!80!white] (MagCenter) circle (2.5);

\end{tikzpicture}